    \newtheorem{theorem}{Theorem}[section]
  \newtheorem{definition}[theorem]{Definition}
  \newtheorem{remark}[theorem]{Remark} 
  \newtheorem{lemma}[theorem]{Lemma}
  \newtheorem{proposition}[theorem]{Proposition}
  \newtheorem{corollary}[theorem]{Corollary}
  \newtheorem{assumption}[theorem]{Assumption}
  \newtheorem{notation}[theorem]{Notation}
  \newtheorem{conjecture}[theorem]{Conjecture}
   \def\L{\mathscr{L}}  
   \def\i{{\tt i}}
   \newcommand{\C}{\mathbb{C}}
  \newcommand{\Z}{\mathbb{Z}}
  \newcommand{\R}{\mathbb{R}}
  \newcommand{\D}{\mathbb{D}}
  \newcommand{\simeqto}{\xrightarrow{\sim}}
  \newcommand{\into}{\hookrightarrow}
  \newcommand{\onto}{\twoheadrightarrow}
  \newcommand{\id}{\mathrm{id}}
  \newcommand{\End}{\mathrm{End}}
  \newcommand{\gr}{\mathrm{gr}}
  \newcommand{\Hom}{\mathrm{Hom}}
  \newcommand{\HOM}{\mathscr{H}om}
  \newcommand{\Sm}{\mathrm{Sm}}
  \newcommand{\Sf}{\mathrm{Sf}}
  \newcommand{\bil}[2]{[#1,#2\rangle} 
  \newcommand{\is}{C} 
  \newcommand{\idx}{c} 
  \newcommand{\idxx}{{c'}} 
  \newcommand{\pol}{(V,\bil{\cdot}{\cdot})} 
  \newcommand{\poll}{(V',\bil{\cdot}{\cdot}')} 
  \newcommand{\polp}{(V_\idx,\bil{\cdot}{\cdot}_\idx)_{\idx\in\is}}
   \newcommand{\coho}[1]{H^\bullet(#1)}
   \newcommand{\Eff}[1]{\mathrm{Eff}(#1)}
   \newcommand{\rep}{{\sf Rep}}
   \newcommand{\st}{{\sf St}}
   \newcommand{\stp}{{\sf StP}}
   \newcommand{\std}{{\sf Std}}
   \newcommand{\me}{{\sf Me}}
   \newcommand{\mep}{{\sf MeP}}
 \newcommand{\im}{\mathrm{Im}}
 \newcommand{\A}{\mathcal{A}}
 \newcommand{\B}{\mathcal{B}}
 \newcommand{\E}{\mathcal{E}}
 \newcommand{\F}{\mathcal{F}}
 \newcommand{\oo}{\mathcal{O}}
 \newcommand{\Ch}{\mathrm{Ch}}
 \newcommand{\Td}{\mathrm{Td}}
 \newcommand{\ima}{\mathrm{Im}}
 \newcommand{\Deg}{\mathrm{deg}}
 \newcommand{\lperp}{{}^\perp\!}
 \newcommand{\HH}[1]{\mathrm{HH}_\bullet \! \left( #1 \right)}
 \newcommand{\lua}{{}^A\!}
 \newcommand{\lub}{{}^B\!}
 \newcommand{\fra}[2]{\{\gr_\idx D^b(#1)\}_{\idx\in #2}}
 \newcommand{\frs}[1]{(\{\A_i\}_{1\le i \le m}, \{F_\idx\}_{\idx\in #1})}
 \newcommand{\rfrs}{\big(\{(\R_{i+1}\A_\bullet)_j\}_{1\le j \le m},
                              \{(\R_{i+1}F_\bullet)_\idx\}_{\idx \in \is}\big)}
 \newcommand{\rms}{(\R_{i+1}\A_\bullet)}
 \newcommand{\lfrs}{\big(\{(\mathbb{L}_i\A_\bullet)_j\}_{1\le j \le m},
                             \{(\mathbb{L}_i F_\bullet)_\idx\}_{\idx \in \is}\big)}
 \newcommand{\lms}{(\mathbb{L}_i\A_\bullet)}
 \newcommand{\tat}{2\pi{\tt i}}
 \newcommand{\di}{d}
 \newcommand{\e}{\mathrm{e}}
 \newcommand{\amut}[2]{\mathfrak{A}_{#2}\big(\mathrm{RH}(\mathscr{M}_{#1}, \mathscr{Q}_{#1})\big)} 
 \newcommand{\bmut}[2]{\mathfrak{B} \big( \frs \big)}
 \newcommand{\amb}{H^\bullet_{\mathrm{amb}}(X)}
\begin{document}
\title{An analogue of Dubrovin's conjecture}
\author{Fumihiko Sanda}
\address{Graduate school of Mathematics, Nagoya University, Furo-cho, Chikusa-ku, Nagoya 464-8602, Japan}
\email{fumhiko.sanda@gmail.com}
\author{Yota Shamoto}
\address{Kavli Institute for the Physics and Mathematics of the Universe (WPI), University of Tokyo, 5-1-5 Kashiwanoha, Kashiwa, Chiba 277-8583, Japan}
\email{yota.shamoto@ipmu.jp}

        \begin{abstract}
We propose an analogue of Dubrovin's conjecture for the case where Fano manifolds have quantum connections of exponential type. It includes the case where the quantum cohomology rings are not necessarily semisimple. The conjecture is described as an isomorphism of two linear algebraic structures, which we call ^^ ^^ mutation systems". Given such a Fano manifold $X$, one of the structures is given by the Stokes structure of the quantum connection of $X$, and the other is given by a semiorthogonal decomposition of the derived category of coherent sheaves on $X$. We also prove the conjecture for a class of smooth Fano complete intersections in a projective space.
        \end{abstract}
\maketitle
        
        \setcounter{tocdepth}{1}
       
          \section{Introduction}
          The purpose of this paper is to propose an analogue of Dubrovin's conjecture \cite{dubgeo} 
          in a more general setting. 
          We shall firstly recall the original conjecture of Dubrovin in \S \ref{intro dub}.
          We also review Gamma conjecture of Galkin-Golyshev-Iritani \cite{ggi}, \cite{gi} in \S \ref{intro gamma} 
          since their result plays a key role in this paper.
          Then we explain the outline of the formulation and the main result 
          in \S \ref{intro mutation system}-\S \ref{intro main theorem}.
          \subsection{Dubrovin's conjecture}\label{intro dub}
          Let $X$ be a Fano manifold.
          B. Dubrovin predicted some relations between 
          the derived category $D^b(X)$ of bounded complexes of coherent sheaves on $X$ 
          and the quantum cohomology ring of $X$. 
          More precisely, 
           he conjectured that            
           $D^b(X)$ has a full exceptional collection 
           if and only if
           the quantum cohomology ring of $X$
           is (generically) semisimple.
           This conjecture is proved for many examples 
           \cite{baysem}, \cite{chaqua3}, \cite{cioont},  \cite{galdub},
           \cite{herupd}, \cite{tabfro}, etc.    
          
          In the case where these two conjecturally equivalent conditions hold, 
          he also predicted a relationship between full exceptional collections of $D^b(X)$ and 
          the quantum connection associated to the quantum cup product. 
          To be more precise, let us roughly recall the definition of the quantum connection.          
          For simplicity, we take the quantum parameter $\tau$ to be $0$.
          
          Let $\mathcal{H}_X:=\coho{X}\otimes \mathcal{O}_{\C_z}$ be the 
          trivial ($\Z/2\Z$-graded) $\mathcal{O}_{\C_z}$-module. 
          Here, $\C_z$ denotes the complex plane whose coordinate function is $z$,
          and $\mathcal{O}_{\C_z}$ denotes the sheaf of algebraic functions on $\C_z$.
          The quantum connection 
          $\nabla:\mathcal{H}_X\to\mathcal{H}_X\otimes\Omega^1_{\C_z}(\log \{0\})\otimes\mathcal{O}_{\C_z}(\{0\})$
          is defined by
          \begin{align*}
              \nabla:={d}-\Big{(}\frac{1}{z}(c_1(X)*_0)-{\mu}\Big{)}\frac{dz}{z},
             \end{align*}          
          where $c_1(X)$ is the first Chern class of $X$, 
          $*_0$ is the quantum cup product with respect to the quantum parameter $\tau=0$,
          and $\mu$ is the grading operator (see Definition \ref{def of qdm} for details).
          
          The quantum connection has a regular singularity at $z=\infty$, and an irregular singularity at $z=0$. 
          If the quantum cohomology ring is semisimple, 
          we have a matrix called a Stokes matrix. 
          We consider the following form of Dubrovin's conjecture:
          \begin{conjecture}\label{conject}
          If the quantum cohomology ring of $X$ is semisimple, 
          then there exists a full exceptional collection $E_1,\dots, E_m$ 
          of $D^b(X)$ such that 
          the Stokes matrix of $(\mathcal{H}_X,\nabla)$ at $z=0$ is equal to the matrix $(\chi(E_i, E_j))_{i, j}$, 
          where $\chi(E,F):=\sum_k(-1)^k\dim \mathrm{Hom}(E,F[k])$ for $E,F\in D^b(X)$.
          \end{conjecture}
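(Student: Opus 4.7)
The plan is to derive the statement from a refined form of the Gamma conjecture (the ``Gamma conjecture II'' of Galkin--Golyshev--Iritani, recalled in \S\ref{intro gamma}) combined with the standard identification of the Stokes matrix with a pairing of asymptotic solutions.

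First, I would set up Stokes analysis at $z=0$. Since the quantum cohomology is semisimple, after a generic choice of quantum parameter the operator $c_1(X)*_0$ has distinct eigenvalues $u_1,\dots,u_m$, and Hukuhara--Turrittin theory produces, on each Stokes sector, a canonical basis of flat sections $y_1,\dots,y_m$ of $\nabla$ whose asymptotics along rays avoiding the Stokes lines are of the form $y_i(z)\sim e^{-u_i/z}(\text{formal power series in }z)\cdot v_i$, where $v_i$ is the $i$-th idempotent of the quantum product. The Stokes matrix is by definition the change-of-basis matrix between the canonical bases on two adjacent sectors separated by the distinguished admissible Stokes ray.

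Next, I would invoke Gamma conjecture II: for an admissible phase, the asymptotic basis $\{y_i\}$ coincides, up to sign and reordering, with the basis $\{y(E_i)\}$ built from some full exceptional collection $E_1,\dots,E_m$ via the Gamma-integral structure map
\[
E \longmapsto y(E) := (2\pi)^{-\dim X/2}\,\widehat{\Gamma}_X \cup (2\pi\i)^{\mu}\mathrm{Ch}(E).
\]
Granted this identification, the Stokes matrix equals the matrix of a natural pairing on flat sections evaluated at the pairs $(y(E_i), y(E_j))$. The final step is to compare that pairing with the Euler form on $D^b(X)$: under the map $E\mapsto y(E)$, the natural pairing on flat sections of $(\mathcal{H}_X,\nabla)$ pulls back to $\chi(E,F)=\sum_k(-1)^k\dim\Hom(E,F[k])$. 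This compatibility is a reformulation of Hirzebruch--Riemann--Roch via the Gamma class, and is the content of the Gamma integral structure of Katzarkov--Kontsevich--Pantev and Iritani. Combining the three steps gives the desired equality of the Stokes matrix with $(\chi(E_i,E_j))_{i,j}$.

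The principal obstacle is Gamma conjecture II itself, which is open in general and established only in a restricted list of examples. For the Fano complete intersections in projective space treated later in the paper, I expect to bypass this obstruction by working with a concrete mirror Landau--Ginzburg model: the oscillatory-integral presentation of flat sections makes the asymptotic basis computable, and one can compare it directly with an explicit full exceptional collection obtained by combining Beilinson's collection on the ambient projective space with Orlov's theorem on semiorthogonal decompositions of complete intersections. The ambient part of the comparison should follow from the projective-space case, while the genuinely new ``primitive'' part will be the place where the complete-intersection geometry has to be exploited in detail.
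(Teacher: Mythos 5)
There is a genuine gap, and it starts with the status of the statement itself: this is Conjecture \ref{conject}, an open conjecture which the paper does not prove (it only cites proofs in special cases such as \cite{guzsto}, \cite{uedsto1}, etc.), so there is no ``paper's own proof'' to match. Your argument is not a proof either: its load-bearing step is Gamma conjecture II, which is itself open, and you say so explicitly. What you have written is essentially the known observation of Galkin--Golyshev--Iritani, recalled in \S\ref{intro gamma} of this paper, that Gamma conjecture II \emph{refines} Conjecture \ref{conject} --- the Stokes matrix equals $([A_i,A_j)_X)_{i,j}$, the asymptotic classes are conjecturally $\hat\Gamma_X\mathrm{Ch}(E_i)$, and $[\hat\Gamma_X\mathrm{Ch}(E_i),\hat\Gamma_X\mathrm{Ch}(E_j))_X=\chi(E_i,E_j)$ by Hirzebruch--Riemann--Roch. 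A conditional reduction of one open conjecture to another is not a proof of the statement.

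Your proposed escape route through the paper's examples also fails for a concrete reason: for the smooth Fano complete intersections in projective space treated in \S\ref{Example}, the hypothesis of Conjecture \ref{conject} is generally \emph{false}. By Lemma \ref{primeigen}, when $r_X\ge 2$ the operator $c_1(X)*_0$ annihilates the primitive part $\amb^\perp\subset H^{d_X}(X)$, so $0$ is an eigenvalue of high multiplicity and the quantum cohomology is not semisimple; correspondingly $D^b(X)$ has no full exceptional collection adapted to this picture, only the semiorthogonal decomposition $\langle\A_{-\idx_\circ},\oo,\dots,\oo(r_X-1)\rangle$ with a non-exceptional component. This is exactly why the paper does not prove Conjecture \ref{conject} for these $X$, but instead formulates and proves the weaker ``Dubrovin type conjecture'' (Definition \ref{dubconj}, Theorem \ref{propertyd}), stated in terms of mutation systems built from the Stokes filtration of a connection of exponential type and from Hochschild homology of the pieces of a semiorthogonal decomposition, with the Gamma class mediating the comparison. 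If you want a provable statement along the lines you sketch, it is that analogue --- using Gamma conjecture I for the ambient part (Proposition \ref{GIforZ}), the symmetry $\mathscr{A}_k=\C\Phi(\hat\Gamma_X\Ch(\oo(k)))$, and the ``vanishing cycle at the center'' argument of Lemma \ref{van cyc at center} --- not the original Dubrovin conjecture, whose hypotheses these examples do not satisfy.
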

          
          Conjecture $\ref{conject}$ has been proved for some $X$ with semisimple quantum cohomology rings
         \cite{morsto},  \cite{guzsto},  \cite{iwasto}, \cite{uedsto2}, \cite{uedsto1}, etc.
%
%
%
%
          \subsection{Gamma conjectures}\label{intro gamma}
          Gamma conjecture II proposed by Galkin-Golyshev-Iritani \cite{ggi} refines Conjecture \ref{conject}.
          The conjecture is described as a coincidence of two kinds of cohomology classes of $X$: 
          Asymptotic classes and Gamma classes. 
                   
          Let $C_X$ be the set of eigenvalues of the linear operator $c_1(X)*_0$ on $\coho{X}$.
          Fix a real number $\theta_\circ$
          (for simplicity of notation, we assume $0\leq \theta_\circ < 2\pi$).  
          We assume that $\theta_\circ$ is generic with respect to $-C_X$ in a certain sense
          (see \S \ref{stofil}).
          Then we have an ordering $\tau_{\theta_\circ}:-C_X\simeqto \{1,\dots, m\}$.  
          If the quantum cohomology ring of $X$ is semisimple, 
          the space of solutions on the sector
          $ \{z\mid |\arg z-\theta_\circ|<\pi/2+\varepsilon \}$ with sufficiently small $\varepsilon>0$          
          have a basis $(y_{i})_{i=1}^m$. 
          The basis $(y_i)_{i=1}^m$ is characterized by their asymptotic growth 
          as $z\to 0$,
          and $y_i$ corresponds to $\exp(c_i/z)$ where $c_i:=\tau_{\theta_\circ}^{-1}(i)$.
          Since the fundamental solution at infinity identifies this space of solutions with $\coho{X}$
          (see Proposition \ref{fundamental solution} or \cite[Proposition 2.3.1]{ggi}), 
          the basis $(y_i)_i$ gives a basis $(A_{ i})_{i=1}^m$ of $\coho{X}$.
          The classes $A_{ i}$ are called asymptotic classes
          (see \cite[\S 4.5]{ggi} for more precise).
          
          The Gamma class \cite{iriint}, \cite{kkp1}, \cite{libche} of $X$ is defined by 
          $$ \hat\Gamma_X:=\prod_{j=1}^{\dim X}\Gamma(1+\delta_j)$$
          where $\delta_1,\dots,\delta_{\dim X}$ are the Chern roots of the tangent bundle of $X$
          and $\Gamma(x)$ is the Gamma function.
          Gamma conjecture II states that            
           there exists a full exceptional collection 
          $E_1,\dots, E_m$ of $D^b(X)$ such that 
          $$A_i=\hat\Gamma_X \mathrm{Ch}(E_i)$$ for all $i$
          (under the semisimplicity of the quantum cohomology ring 
          and the existence of at least one full exceptional collection of $D^b(X)$).  
                    
          This conjecture refines Conjecture \ref{conject} in the following sense.
          Let $[\cdot,\cdot)_X:\coho{X}\otimes\coho{X}\to\C$ be 
          a non-symmetric linear map defined by 
          \begin{align*}
              [\alpha,\beta)_X:=\frac{1}{(2\pi)^{\dim X}}\int_Xe^{\pi\i\mu}e^{-\pi\i\rho_{\!{}_X}}\alpha \cup \beta
             \end{align*}           
          where $\rho_{\!{}_X}$ is the cup product $c_1(X)\cup$.
          Then, on the one hand, the Stokes matrix coincides with $([A_i,A_j)_X)_{i, j}$. 
          On the other hand, we have
          $\chi(E_i,E_j)=\big{[}\hat{\Gamma}_X\mathrm{Ch}(E_i),\hat{\Gamma}_X\mathrm{Ch}(E_j)\big{)}_X$.

          \subsection{Mutation systems}\label{intro mutation system}
          The goal of this paper is to give an analogue of Conjecture \ref{conject} in the case 
          where the quantum cohomology ring is not necessarily semisimple.
          To do this, we introduce a notion of mutation systems.  
          For a finite dimensional vector space $V$ over a field ${\bm k}$, 
          we call a pairing $[\cdot,\cdot\rangle: V\otimes V\to {\bm k}$ of $V$
          non-degenerate if
          the induced map $V\ni v\mapsto (w\to [v, w\rangle)\in V^\vee=\mathrm{Hom}(V,\bm{k})$  
          is isomorphic. 
          We often assume non-degenerateness of the pairings without a mention. 
          
          A mutation system is a tuple of a
          (finite dimensional) vector space $V$ with a pairing $[\cdot,\cdot\rangle$,
          a family of  vector spaces with pairings $(V_c,\bil{\cdot}{\cdot}_c)_{c\in C}$ indexed 
          by $C$, a bijection $\tau:C\to\{1,\dots, m\}$,
          and an isomorphism of vector spaces $f:\bigoplus_cV_c\simeqto V$
          with some conditions (see Definition \ref{def of mutation system} for details).
          
          Mutation systems admit a kind of mutation, 
          that is, we have a series of functors between categories of mutation systems 
          with braid relations.
          
          In the case where $\dim V_c=1$ for all $c\in C$, 
          similar structure has been investigated in various contexts 
          \cite{bonsym}, \cite{vanona}, \cite{ggi}, \cite{kuzexc}, \cite{noghel}.
          \subsection{A-mutation systems}

          We can construct a mutation system from the quantum connection of $X$ 
          under an assumption (Assumption \ref{assumption of exponential type}).
          The construction has the following steps. 
          \begin{enumerate}
          \item\label{step 1}
          Apply the Riemann-Hilbert correspondence \cite{mallac}, \cite{sabint} 
          for the quantum connection (around $z=0$), 
          we get a local system on $S^1$ with a filtration called a Stokes filtration.
          We use the assumption here.
          \item\label{step 2}
          A reformulation of Stokes data by Hertling and Sabbah \cite{herexa}
          gives a mutation system (we also use the Poincar\'e pairing on $\coho{X}$).
          Here, we need to fix a real number $\theta_\circ$ with a genericity condition.
          We give this reformulation in \S 2.
          \item\label{step 3}  By using the fundamental solution at $z=\infty$, 
          the vector space $V$ with a pairing $\bil{\cdot}{\cdot}$ underlying the mutation system constructed in (\ref{step 2})
          is identified with $(H^\bullet(X),[\cdot,\cdot)_X)$. 
          \end{enumerate}
          The resulting mutation system is called an A-mutation system.
          Each step is closely related to the construction of asymptotic classes in the semisimple case.
          Although the resulting mutation system depends on the choice of $\theta_\circ$, 
          they are all equivalent by mutations.
          
          \subsection{B-mutation systems}
          We can construct a mutation system for a semiorthogonal decomposition 
          of $D^b(X)$ (see, for example, \cite[Definition 2.3]{kuzhoc} for the definition of semiorthogonal decompositions).
          Put  
          $\mathrm{HH}_k(X):=\bigoplus_{p-q=k}H^q(X,\Omega_X^p)$
          and $\mathrm{HH}_\bullet(X):=\bigoplus_{k\in\Z}\mathrm{HH}_k(X)$. 
          We have a pairing $[\cdot,\cdot\rangle_X$ defined by 
          $$[\alpha,\beta\rangle_X:=\frac{1}{(2\pi\i)^{\dim X}}\int_X e^{\pi\i \rho_{\!_X}}W(\alpha)\cup\beta,$$
          where $W(\alpha)=\i^{p+q}\alpha$ for $\alpha\in H^q(X,\Omega_X^p)$
          (cf. \cite{calmuk1}, \cite{pollef}, \cite{ramrel}, \cite{rammuk}, \cite{shkhir}).          
          Let $D^b(X)=\langle \mathcal{A}_1,\dots,\mathcal{A}_m \rangle$ be a semiorthogonal decomposition.
          Then, by using a theorem of Kuznetsov \cite{kuzhoc}, \cite{kuzbas},  
          we can define the subspace $\mathrm{HH}_\bullet(\mathcal{A}_i)$
          of $\mathrm{HH}_\bullet(X)$,
          which we call the Hochschild homology
          of $\mathcal{A}_i$ (cf. \cite{calmuk1}, \cite{kuzhoc}, \cite{polche}, \cite{shkhir}). 
          They satisfies $\mathrm{HH}_\bullet(X)=\bigoplus_i\mathrm{HH}_\bullet (\mathcal{A}_i)$. 
          Roughly speaking, this decomposition defines 
          a mutation system, which we call a B-mutation system. 
          The braid group action on the set of semiorthogonal decompositions of $D^b(X)$  
          is compatible with the mutation on the B-mutation systems.
          \subsection{The formulation of the analogue and the main theorem}\label{intro main theorem}
          Using the Gamma class, 
          we define an isomorphism $\Gamma:\mathrm{HH}_\bullet(X)\simeqto H^\bullet(X)$
          with $[\alpha,\beta\rangle_X=[\Gamma(\alpha),\Gamma(\beta))_X$.
          We define an analogue of Dubrovin's conjecture 
          by the existence of a semiorthogonal decomposition 
          such that 
          the B-mutation system for the semiorthogonal decomposition
          is isomorphic to 
          the A-mutation system
          via $\Gamma$. 
          We call the analogue ^^ ^^ Dubrovin type conjecture''.
          For more precise, see Definition \ref{dubconj}.
          In the case where the quantum cohomology ring is semisimple, 
          Gamma conjecture II implies this conjecture.
           
          The main result of this paper is to give a class of examples 
          such that this Dubrovin type conjecture holds.
          More precisely, we have the following.
          \begin{theorem}[{Theorem \ref{propertyd}}]
          Let $X$ be a smooth Fano complete intersection in a projective space. 
          If Fano index of $X$ is larger than $1$, then $X$ satisfies Dubrovin type conjecture.
          \end{theorem}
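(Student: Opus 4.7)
The plan is to exhibit an explicit semiorthogonal decomposition of $D^b(X)$, compute both the associated B-mutation system and the A-mutation system attached to the quantum connection, and verify that the Gamma isomorphism $\Gamma$ intertwines them.

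I would begin with the Lefschetz-type decomposition
\[
D^b(X) = \langle \A_X, \oo_X, \oo_X(1), \dots, \oo_X(r-1)\rangle,
\]
where $r$ is the Fano index of $X$ and $\A_X$ is the primitive component; this exists precisely because $X$ is a smooth Fano complete intersection in a projective space with $r>1$. Using Kuznetsov's results recalled in \S\ref{intro main theorem}, I would unpack the induced Hochschild homology decomposition
\[
\mathrm{HH}_\bullet(X) = \mathrm{HH}_\bullet(\A_X) \oplus \bigoplus_{i=0}^{r-1} \mathrm{HH}_\bullet(\langle\oo_X(i)\rangle),
\]
identify $\mathrm{HH}_\bullet(\A_X)$ with the primitive Hodge cohomology of $X$, and compute the restriction of $[\cdot,\cdot\rangle_X$ to each block. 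This writes the B-mutation system down explicitly.

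Next I would analyze the A-mutation system. By quantum Lefschetz, the spectrum of $c_1(X)*_0$ splits into contributions from the ambient cohomology and from the primitive cohomology. For $r>1$ the ambient eigenvalues form the orbit of a single nonzero number under the action of $r$-th roots of unity, hence are $r$ distinct nonzero values, and the primitive part is contained in the generalized eigenspace at $0$, disjoint from the ambient spectrum. Using Gamma conjecture I of Galkin-Golyshev-Iritani, which is established for smooth Fano complete intersections, I would identify the asymptotic class attached to a distinguished ambient eigenvalue with $\hat\Gamma_X = \hat\Gamma_X\,\mathrm{Ch}(\oo_X)$. Rotating via the Galois action around $z=0$ produces $\hat\Gamma_X\,\mathrm{Ch}(\oo_X(i))$ at the remaining ambient eigenvalues, accounting for the $r$ rank-one blocks; the mutations of \S\ref{intro mutation system} then realign the angular ordering to match any chosen generic $\theta_\circ$.

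The main obstacle is the primitive block. I must show that the piece of the Stokes data at the eigenvalue $0$, transported via the fundamental solution at $z=\infty$ and pushed through $\Gamma$, lands exactly on $\mathrm{HH}_\bullet(\A_X)$ and respects the pairing. For this I anticipate needing a careful comparison between the hypergeometric description of the quantum connection, obtained by applying quantum Lefschetz to the ambient projective space, and the Euler pairing on $\A_X$, together with the standard identification of primitive cohomology with the orthogonal complement of the ambient cohomology under the Poincar\'e pairing. Once this block identification is established, the identity $[\alpha,\beta\rangle_X = [\Gamma(\alpha),\Gamma(\beta))_X$ from \S\ref{intro main theorem} reduces the remaining verification to a comparison of pairings block by block, from which Dubrovin type conjecture for $X$ follows.
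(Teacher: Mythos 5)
Your overall strategy---the decomposition $\langle \A_X,\oo_X,\dots,\oo_X(r_X-1)\rangle$, Gamma conjecture I combined with the rotation symmetry of the fundamental solution to handle the line-bundle blocks, and a braid-group realignment of angles---is the same as the paper's. The genuine gap is in the residual block, and it is twofold. First, your starting point is factually wrong: for a complete intersection the eigenvalue $0$ of $c_1(X)*_0$ is \emph{not} purely primitive. By Lemma \ref{coh of z} one has $\amb\cong\C[x]/\langle x^{d_X+1-r_X}(x^{r_X}-D_X)\rangle$, so the ambient part already contributes a generalized eigenspace at $0$ of dimension $d_X+1-r_X\ge 1$; correspondingly $\HH{\A_X}$ has dimension $\dim\coho{X}-r_X$, strictly larger than the primitive cohomology, so the proposed identification of $\HH{\A_X}$ with primitive cohomology cannot hold. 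Second, and more seriously, you give no actual argument for matching this block: ``a careful comparison between the hypergeometric description and the Euler pairing on $\A_X$'' is a declaration of intent, and carrying it out directly (computing the Stokes data of the regular-singular piece at $0$ and the pairing on $\HH{\A_X}$) would be hard. The paper avoids this computation entirely via Lemma \ref{van cyc at center}: once every other block is a vanishing cycle, the remaining block is forced to match, because on the A-side and on the B-side it is characterized as the intersection of the left and right orthogonals of the already-matched blocks with respect to $[\cdot,\cdot)_X$ and $[\cdot,\cdot\rangle_X$, which $\Gamma$ intertwines. This orthogonality argument is the key idea missing from your proposal.

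Two further points you would need to supply. Dubrovin type conjecture (Definition \ref{dubconj}) includes Assumption \ref{assumption of exponential type}, and your proposal never verifies that $\mathscr{M}_X$ is of exponential type; the paper proves this in Proposition \ref{exptype} via Lemma \ref{decomp}, using Reichelt's result for the ambient summand and a direct Jordan-block computation for $\amb^\perp\otimes\mathscr{O}_{\C,0}(*\{0\})$ (which for $r_X\ge 2$ is regular singular because $c_1(X)*_0$ vanishes there by Lemma \ref{primeigen}). Also, since the line-bundle vanishing cycles naturally live at the rotated angles $\theta_\circ-2\pi k/r_X$, one needs the generalized construction $\mathfrak{A}_{\theta_\bullet}$ for an ordered tuple of angles together with the explicit braid computation (Lemma \ref{braidact}, Corollary \ref{factorcomputation}) to return to a single generic $\theta_\circ$; your ``realign by mutations'' is the right idea, but in the paper this step is a nontrivial verification, not a formality.
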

          
          \subsection{Plan of the paper}
          In \S 2, we introduce
          the notion of mutation systems, 
          and define the mutation on mutation systems.
          We also relate it with Stokes filtered local systems, 
          and recall some general facts on the Riemann-Hilbert correspondence 
          for meromorphic connections on a germ of complex plane at the origin.
          In \S 3, we give a definition of A-mutation systems.
          In \S 4,  we give a definition of B-mutation systems.
          In \S 5, we formulate an analogue of Dubrovin's conjecture, which we call ^^ ^^  Dubrovin type conjecture''. 
          In \S 6, we show some properties, which are used to prove the main theorem, 
          of quantum connections of Fano manifolds. 
          In \S 7, we show the main theorem, that is, we give a class of examples 
          which satisfies Dubrovin type conjecture.

          \section{Preliminary}
          \subsection{Stokes filtrations and Stokes data}\label{stofil}
          Recall the definition of Stokes filtrations on local systems over $S^1=\{t\in \C\mid |t|=1\}$
          in the sense of \cite{herexa} (see also \cite{sabint}).
          Let   $\bm{k}$ be a field. 
          Let ${\sf Loc}_{\bm{k}}:={\sf Loc}_{\bm{k}}(S^1)$ denote the category of 
          (finite rank) $\bm{k}$-local systems on $S^1$. 
          We identify $S^1$ with $\mathbb{R}/2\pi\mathbb{Z}$ in the standard way and denote them simply by $S^1=\mathbb{R}/2\pi\mathbb{Z}$.
          For $\theta\in S^1=\mathbb{R}/2\pi\mathbb{Z}$, 
                 let $\leqslant_\theta$ be the partial order on $\mathbb{C}$ defined by the following relation$:$
                 \begin{align}\label{order-c}
                  c\leqslant_\theta c'
                  \Leftrightarrow 
                  \Re (e^{-\mathrm{i}\theta}c) < \Re (e^{-\mathrm{i}\theta}c')\ \ 
                  \text{or}\ \ 
                   c=c'.
                 \end{align}
                 We also set $c<_\theta c'$ if and only if $c\leqslant_\theta c'$ and $c\neq c'$.
                 \begin{definition}[{\cite[2.a]{herexa}}]\label{def stokes}
                  Let $\L$ be a ${\bm k}$-local system on $S^1$.
                  A family $\L_\bullet$ of subsheaves $\L_{\leqslant c}\subset \L\ ({c\in \C})$ 
                  is called a Stokes filtration 
                  if the following conditions are satisfied$:$                           
           \begin{enumerate}
           \item\label{first condition of stokes structure}
                 For each $\theta$, 
                 the germs form an exhaustive increasing filtration of $\L_\theta$
                 with respect to the order defined by the equation $(\ref{order-c})$ above.
           \item
                 Set $\L_{< c,\theta}:=\sum_{c'<c}\L_{\leqslant c', \theta}$.
                 It defines a subsheaf $\L_{<c}$ in $\L$.
                 The second condition is  that the sheaves $\gr_c \L:=\L_{\leqslant c}/\L_{<c}$ 
                 are in ${\sf Loc}_{\bm k}(S^1)$.
           \item Set $\gr\L:=\bigoplus_c \gr_c\L$. 
                 It has a natural filtration 
                 given by 
                 $(\gr\L_{\leqslant c})_{\theta}:=\bigoplus_{c'\leqslant_\theta c}\gr_{c'}\L_\theta$.
                 The last condition is that near any point $\theta\in S^1$, 
                 there are local isomorphisms  $\eta:\L \simeqto \gr \L$ such that
                 $\eta (\L_{\leqslant c})\subset \gr\L_{\leqslant c}$ for all $c\in\C$, 
                 and that the induced graded morphism is identity.          
                 \end{enumerate}
              We call the pair $(\L,\L_\bullet)$ a Stokes filtered local system.
              Let $(\L^i,\L^i_\bullet)$ be two Stokes filtered local systems $(i=0,1)$. 
              A morphism of local systems $\lambda:\L^0\to\L^1$ is called a morphism of Stokes filtered local systems
              if it preserves Stokes filtrations,
              i.e. $\lambda{\L^0_{\leqslant c}} \subset \L^1_{\leqslant c}$ for all $c\in\C$. 
             \end{definition}
           For example, we define a Stokes filtration on ${\bm k}_{S^1}$
           by ${\bm k}_{S^1<0,\theta}=0$ and ${\bm k}_{S^1\leqslant 0,\theta}={\bm k}_{S^1}$
           for all $\theta \in S^1$. 
           For two Stokes structures $(\L^i,\L^i_\bullet)$ $(i=0,1)$, 
           the tensor product $\L^0\otimes \L^1$ is equipped with a natural Stokes filtration 
           by 
           \begin{align}\label{tensor stokes}
              (\L^0\otimes \L^1)_{\leqslant c,\theta}
              :=\sum_{c^0+c^1\leqslant_\theta c} \L^0_{\leqslant c^0,\theta}\otimes \L^1_{\leqslant c^1,\theta}.
           \end{align}
           
           Let $\iota$ be a involution on $S^1=\mathbb{R}/2\pi\mathbb{Z}$ defined by $\iota(\theta)=\theta+\pi$.
           If $(\L,\L_\bullet)$ is a Stokes structure, $\iota^{-1}\L$ has a natural Stokes filtration 
           by 
           \begin{align}\label{involution}
           (\iota^{-1}\L)_{\leqslant c,\theta}:=\L_{\leqslant -c,\theta+\pi} \ (c\in \C,\ \theta \in S^1).
           \end{align}
           The graded quotient $\gr_c(\iota^{-1}\L)$ is naturally identified with $\iota^{-1}\gr_{-c}\L$
           (\cite[\S 3.a]{herexa}).
           The dual local system $\L^\vee=\HOM(\L,{\bm k}_{S^1})$ also has a natural filtration defined by 
           \begin{align}\label{vee}
            (\L^\vee)_{\leqslant c}:=(\L_{<-c})^\perp
           \end{align}
           where $(\L_{<c})^\perp$ consists of local morphisms $\L\to {\bm k}_{S^1}$
           which send $\L_{<c}$ to $0$. 
           The graded quotient $\gr_c(\L^\vee)$ is naturally identified with 
           $(\gr_{-c}\L)^\vee$. 
           Hence $\D\L=\iota^{-1}\L^\vee$ has natural Stokes filtration by 
           \begin{align}\label{duality for Stokes filtration}
            (\D\L)_{\leqslant c}=(\L_{<c,\theta+\pi})^\perp\ (c\in\C, \theta\in S^1),
           \end{align}
           whose graded quotient $\gr_c(\D\L)$ is naturally identified with $\D(\gr_c\L)$.
                        
          The category of Stokes filtered local systems on $S^1$ defined above is denoted by ${\sf St}_{\bm{k}}(S^1)$.
          In this paper we only consider Stokes filtered local systems on $S^1$. 
          We abbreviate $S^1$ and simply denote it by ${\sf St}_{\bm{k}}$.
          For each $(\L,\L_\bullet)\in \st_{\bm{k}}$, the set $\{c\in \C\mid  \gr_c \L\neq  0\}$ 
          is called exponents of $(\L,\L_\bullet)$.
          For a finite subset $C$ in $\C$, 
          ${\sf St}_{\bm{k}}^C$ denote the full subcategory of Stokes filtered local systems 
          whose exponents are contained in $C$.
          We remark that $\D$ defines a contravariant functor from ${\sf St}_{\bm{k}}^C$ to itself.
          
          For two distinct points $c\neq c'$ in $\C$, the Stokes direction
          of the pair is the set of points $\theta$
          in $S^1$ such that $\Re (e^{-i\theta}(c-c'))=0$.
          For a finite set $C\subset \C$, an open interval $I\subset \R$ is called $C$-good 
          if the image in $\R/{2\pi\Z}$ of $I$
          contains exactly one Stokes direction for each pair $c\neq c'$ in $C$.     
          We put $I_{\theta_\circ}:=]\theta_\circ-\frac{\pi}{2}-\varepsilon,\theta_\circ+\frac{\pi}{2}+\varepsilon[$
          for fixed $\theta_\circ\in\R$ where $\varepsilon$ is a sufficiently small positive number .  
          If $I_{\theta_\circ}$ is $C$-good for sufficiently small $\varepsilon >0$, $\theta_\circ$ is called $C$-generic.
          It is equivalent to the condition that $\theta_\circ+\pi/2$ is not the Stokes direction of 
          any pair $c\neq c'$ in $C$.
          
          \begin{proposition}[{\cite[Proposition 2.2]{herexa}}]\label{split}
          \
          \begin{enumerate}
           \item On any $C$-good open interval $I\subset \R$, there exists a unique splitting 
                    $\eta_{[I]}:\L\mid_{[I]}\simeqto \gr\L\mid_{[I]}$ compatible with the Stokes filtrations
                    where $[I]$ is the image of $I$ in $\R/2\pi\Z$.
           \item Let $\lambda:(\L,\L_\bullet)\to(\L',\L'_\bullet)$ be a morphism in ${\sf St}_{\bm k}^C$.
                    Then, for any $C$-good open interval $I\subset \R$, the restriction $\lambda|_{[I]}$ 
                    is graded with respect to the splittings in $(1)$.  
                    In other words, the induced morphism $\lambda_{c, c'}:\gr_c\L\to \gr_{c'}\L$ is zero for 
                    any pair $c\neq c'$ in $C$. 
          \end{enumerate}
          \end{proposition}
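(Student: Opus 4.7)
The plan is to derive both parts from a single rigidity statement about filtered endomorphisms of $\gr\L$ over a $C$-good interval. Specifically, I would first show: if $[I]\subset S^1$ is the image of a $C$-good open interval $I\subset\R$ and $\phi:\gr\L|_{[I]}\to\gr\L|_{[I]}$ is a filtered morphism inducing the identity on each $\gr_c\L$, then $\phi=\id$. Writing $\phi=\sum_{c,c'\in C}\phi_{c,c'}$ with $\phi_{c,c'}:\gr_c\L\to\gr_{c'}\L$, the filtration-preservation condition at each $\theta\in[I]$ forces $c'\leqslant_\theta c$ whenever $\phi_{c,c'}$ is nonzero near $\theta$. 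Since this inequality must hold for every $\theta\in[I]$, an off-diagonal component (with $c'\neq c$) would require $c'<_\theta c$ throughout $[I]$. But the defining property of a $C$-good interval is that the relative order of any two distinct elements of $C$ flips exactly once on $[I]$; hence no such pair exists, and $\phi=\id$.

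Uniqueness in (1) is then immediate: any two splittings $\eta_1,\eta_2$ of the required type differ by the filtered automorphism $\eta_2\circ\eta_1^{-1}$ of $\gr\L|_{[I]}$ inducing the identity on graded pieces, which must be $\id$. For existence, I would cover $[I]$ by sufficiently small open intervals $[U_\alpha]$, each contained in some $C$-good open interval and small enough that Definition \ref{def stokes}(3) provides a local splitting $\eta_\alpha$ on it. On any overlap $[U_\alpha]\cap[U_\beta]$, which is again contained in a $C$-good interval, the uniqueness just established forces $\eta_\alpha=\eta_\beta$; hence the $\eta_\alpha$ glue to a global splitting $\eta_{[I]}$.

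For part (2), the same combinatorial argument applies to the conjugated morphism $\tilde\lambda:=\eta'_{[I]}\circ\lambda|_{[I]}\circ\eta_{[I]}^{-1}:\gr\L|_{[I]}\to\gr\L'|_{[I]}$ induced by the splittings produced in (1). Because $\lambda$ preserves Stokes filtrations, $\tilde\lambda$ preserves the induced filtration on the graded objects; decomposing $\tilde\lambda=\sum_{c,c'}\lambda_{c,c'}$ as before, the same reasoning forces $\lambda_{c,c'}=0$ for $c\neq c'$, which is exactly the assertion.

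I do not expect a serious obstacle: the heart of the argument is the rigidity step, which is essentially a reformulation of the definition of a $C$-good interval as a linear-algebraic statement. The only mild care needed lies in choosing the covering in the existence step so that all pairwise overlaps still lie inside $C$-good intervals, and this is routine given the finiteness of the Stokes directions among pairs in $C$.
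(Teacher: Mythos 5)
The paper itself gives no proof of this proposition (it is quoted from Hertling--Sabbah with a \qed), so your attempt has to be measured against the standard argument. Your rigidity lemma is correct and is exactly the right tool: a filtered endomorphism of $\gr\L|_{[I]}$ inducing the identity on each $\gr_c\L$ has components $\phi_{c,c'}$ ($c\neq c'$) that could only be nonzero if $c'<_\theta c$ held at every $\theta\in[I]$, and this is impossible because $[I]$ contains a Stokes direction of the pair $(c,c')$, where neither strict inequality holds. This gives uniqueness in $(1)$ and, applied to $\eta'_{[I]}\circ\lambda|_{[I]}\circ\eta_{[I]}^{-1}$, it gives $(2)$; both are fine (modulo the routine check, via Definition \ref{def stokes}(3) and a dimension count, that a filtered isomorphism inducing the identity on graded pieces has filtered inverse).

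The existence step, however, has a genuine gap. On an overlap $[U_\alpha]\cap[U_\beta]$ you invoke ``the uniqueness just established'', but your rigidity argument needs the interval to contain a Stokes direction for \emph{every} pair $c\neq c'$; a small overlap contained in a $C$-good interval typically contains none, and there uniqueness simply fails: two local splittings may differ by $\id+\nu$ with $\nu$ a nonzero morphism $\gr_c\L\to\gr_{c'}\L$, which is filtered as long as $c'<_\theta c$ throughout the (small) overlap. These are precisely the sections of $\mathcal{A}ut^{<0}(\gr\L)$ that the paper uses in Lemma \ref{graded classification} to parametrize \emph{non-trivial} Stokes structures; if your gluing argument were valid it would apply verbatim to a cover of all of $S^1$ and show that every Stokes filtered local system is graded, which is false. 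The genuine content of existence is that on an interval containing \emph{at most} one Stokes direction per pair the local splittings can be successively corrected (equivalently, a \v{C}ech cocycle for $\mathcal{A}ut^{<0}(\gr\L)$ over such an interval can be trivialized, the point being that the relevant ``$<_\theta$-nilpotent'' transition automorphisms extend across the subintervals where the order relations persist); this is where the ``exactly one Stokes direction'' hypothesis is used beyond your uniqueness step, and it is missing from your proposal.
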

          
         Let $\rep_\Z({\bm k})$ 
         be the category of finite dimensional representations of the fundamental group $\pi_1(S^1)\simeq \Z$ over ${\bm k}$.
         This category is considered as the category of pairs $(V,T)$ of 
         a finite dimensional $\bm{k}$-vector space $V$
         and an automorphism $T$ on $V$. 
         Let $\is$ be a finite set and $\tau:\is\simeqto \{1,2,\dots, m\}$ be a bijection.
         \begin{definition}\label{stokes data}
          Let $(V,T)$ be an object in $\rep_\Z({\bm k})$.
          Stokes data on $(V,T)$ of type $(\is,\tau)$ are
          a family of objects 
          $(V_\idx,T_\idx)_{\idx\in\is}$ in $\rep_\Z({\bm {k}})$, 
          isomorphisms $f:\bigoplus_{\idx\in \is}V_\idx\to V$, and
          $f^*:V\simeqto \bigoplus_{\idx\in \is}V_\idx$
          of {\em vector spaces} with the following properties$:$
          \begin{enumerate}
           \item Let $f_\idx:V_\idx \to V$ be the composition of $f$ and the natural inclusion 
           $V_\idx\into \bigoplus_{\idxx\in\is}V_{\idxx}$. 
           Let $f^*_\idx:V \to V_\idx$ be the composition of the projection 
          $\bigoplus_{\idxx} V_{\idxx}\onto V_\idx$ and $f^*$. 
          Then, $f^*_{\idxx} \circ f_\idx=0$ if $\tau(\idxx) < \tau(\idx )$,
          and $f^*_\idx \circ f_\idx = \id_{V_\idx}$ 
          for all $\idx \in \is$.
          \item Set $f^!_\idx:=T_\idx^{-1}\circ f^*_\idx \circ T$ for $\idx\in \is$.
          Then $f^!_{\idxx}\circ f_{\idx}=0$ for $\tau(\idx)<\tau(\idxx)$,
          and $f^!_{\idx} \circ f_\idx=\id_{V_\idx}$ for all $\idx\in \is$.
          \end{enumerate}
          Let $\big{(}(V^{(a)},T^{(a)}),(V^{(a)}_\idx,T^{(a)}_\idx)_{\idx\in\is},f^{(a)},f^{*(a)}\big{)}$  $(a=0,1)$
          be two objects in $\rep_\Z({\bm k})$ with Stokes data of type $(\is,\tau)$.
          A morphism $g:(V^{0},T^{(0)})\to (V^{(1)},T^{(1)})$ is compatible with the Stokes data when 
          the induced maps $(f^{(1)})^{-1}\circ g \circ f^{(0)}$ and $f^{*(1)}\circ g\circ (f^{*(0)})^{-1}$
          are graded.
         \end{definition}
         \begin{remark}\label{2.4}
          We do not need to assume that $f^*$ is an isomorphism
          since it is deduced from the condition $(1)$ and the condition that $f$ is an isomorphism. 
          Similarly, we can show that $f^!:= \prod_cf_c^! :V\to\bigoplus_c V_c$ is an isomorphism.
          \end{remark}
          Let ${\sf Std}_{\bm k}(\is,\tau)$ denote the category of representations in 
          $\rep_\Z({\bm k})$ 
          with Stokes data of type $(\is,\tau)$ defined above
          (a morphism in this category is defined as a morphism in $\rep_\Z({\bm k})$ compatible with the Stokes data).          
          Fix  an $m$-point set $C\subset \C$ and $C$-generic point $\theta_\circ \in \R$.
          Then we have a unique bijection $\tau_{\theta_\circ}:C\simeqto \{1,2,\dots, m\}$
          with the following property:
          when we put $c_i:=\tau^{-1}_{\theta_\circ}(i)$ for $i=1,2,\dots, m$, 
          we have
          $c_1<_{\theta_{\circ}+\pi/2}c_2<_{\theta_{\circ}+\pi/2}\cdots <_{\theta_{\circ}+\pi/2}c_m$.
          
          In the rest of \S \ref{stofil}, we shall construct the functor 
          $\mathfrak{A}_{\theta_\circ}:{\sf St}_{\bm k}^C\to{\sf Std}_{\bm k}(C,\tau_{\theta_\circ})$
          and show that $\mathfrak{A}_{\theta_\circ}$ gives an equivalence of categories.
          
          To prepare for the construction of the functor,
          we recall the equivalence ${\sf Loc}_{\bm k}\simeqto {\sf Rep}_{\bm k}$ of categories.
          Let $p:\R\to S^1:=\R/2\pi\Z$ be the quotient map. This gives an universal covering of $S^1$.
          For $\L\in {\sf Loc}_{\bm k}$, set $V_\L:=\Gamma(\R, p^{-1}\L)$ 
          and define the map $T_\L:V_\L\to V_\L$ by $T_\L v(\theta):=v(\theta+2\pi)$.
          The correspondence $\L\mapsto (V_\L,T_\L)$ is functorial and we can construct the quasi-inverse 
          functor explicitly, this is an equivalence of categories. 
          
           By Proposition \ref{split}, (1), we have a unique splitting
          $\eta_{[I_{\theta_\circ}]}:\L|_{[I_{\theta_\circ}]}\simeqto \gr\L|_{[I_{\theta_\circ}]}$.
          We put 
          $\eta_{\theta_\circ}:=(p^{-1}\eta_{[I_{\theta_\circ}]})|_{I_{\theta_\circ}}:(p^{-1}\L)|_{I_{\theta_\circ}}
          \simeqto (p^{-1}\gr\L)|_{I_{\theta_\circ}}$, 
          which is an isomorphism of sheaves on $I_{\theta_\circ}\subset \R$.

          For an object $(\L,\L_\bullet)\in {\sf St}_{\bm k}^C$,
          we have $(V_\L,T_\L)$ and $(V_{c},T_c):=(V_{\gr_{c}\L},T_{\gr_c\L})$ for $c\in C$.
          Let $r_{\gr\L}^{\theta}:\bigoplus_cV_c\simeqto \Gamma(I_{\theta}, p^{-1}\gr\L)$
          and $r^\theta_\L:V_\L\simeqto\Gamma(I_{\theta},p^{-1}\L)$ 
          be the restriction maps for $\theta\in\R$.
          We define $f_{\theta_\circ}:\bigoplus_{c\in C}V_c\simeqto V$
          as a composition: 
          $f_{\theta_\circ}:=
          (r_\L^{\theta_\circ})^{-1}
          \circ\Gamma(I_{\theta_\circ},\eta_{\theta_\circ}^{-1})
          \circ r^{\theta_\circ}_{\gr\L}$.
          Similarly, we define  $f^*_{\theta_\circ}:V\simeqto \bigoplus_{c\in C}V_c$
          as a composition: 
          $f^*_{\theta_\circ}:=
          (r_{\gr\L}^{\theta_\circ-\pi})^{-1}
          \circ \Gamma(I_{\theta_\circ-\pi},\eta_{\theta_\circ-\pi})
          \circ r^{\theta_\circ-\pi}_\L$.
          Then, 
          $f^!_{\theta_\circ}:=\bigoplus_c T^{-1}_c\circ f^*_{\theta_\circ}\circ T$
          is described as a composition: 
          $f^!_{\theta_\circ}:=
          (r_{\gr\L}^{\theta_\circ+\pi})^{-1}
          \circ \Gamma(I_{\theta_\circ+\pi},\eta_{\theta_\circ+\pi})
          \circ r^{\theta_\circ+\pi}_\L$. 
          \begin{proposition}\label{object}
           The data $\big{(}(V_c,T_c)_{c\in C},f_{\theta_\circ},f^*_{\theta_\circ}\big{)}$
           are Stokes data on $(V_\L,T_\L)$ of type $(C,\tau_{\theta_\circ})$.
           Moreover, the correspondence 
           $(\L,\L_\bullet)\mapsto \big{(}(V_\L,T_\L),(V_c,T_c)_{c\in C},f_{\theta_\circ},f^*_{\theta_\circ}\big{)}$, 
           and 
           $\lambda
           \mapsto \Gamma(\R, p^{-1}\lambda)$
           $($where $\lambda$ is a morphism in ${\sf St}^C_{\bm k}$$)$
           gives a fully faithful functor $\mathfrak{A}_{\theta_\circ}:
           {\sf St}^C_{\bm k}\to {\sf Std}_{\bm k}(C,\tau_{\theta_\circ})$.
          \end{proposition}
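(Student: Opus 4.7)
My plan is to verify in sequence: (i) that $(f_{\theta_\circ},f^*_{\theta_\circ})$ satisfy Definition \ref{stokes data}; (ii) functoriality of the assignment $\lambda\mapsto\Gamma(\R,p^{-1}\lambda)$; and (iii) fully faithfulness. The main tool throughout is Proposition \ref{split}: part (1) produces the canonical filtration-compatible splittings on $C$-good intervals, and part (2) guarantees that morphisms in $\st^C_{\bm k}$ become graded on such intervals.

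For (i), first note that $f_{\theta_\circ}$ and $f^*_{\theta_\circ}$ are isomorphisms of vector spaces, since each is a composition of sheaf isomorphisms (the splittings) with restriction maps that are isomorphisms over connected intervals in $\R$. To verify condition (1), consider the overlap $J^-:=I_{\theta_\circ}\cap I_{\theta_\circ-\pi}\subset\,]\theta_\circ-\pi/2-\varepsilon,\theta_\circ-\pi/2+\varepsilon[$. By $C$-genericity of $\theta_\circ$, no Stokes direction of any pair in $C$ lies in $J^-$, and at every $\theta\in J^-$ the order $\leqslant_\theta$ on $C$ is the reverse of the one defining $\tau_{\theta_\circ}$: $c_m<_\theta c_{m-1}<_\theta\dots<_\theta c_1$. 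The composition $\phi^-:=\eta_{\theta_\circ-\pi}\circ\eta_{\theta_\circ}^{-1}$ is then a filtration-preserving automorphism of $p^{-1}\gr\L|_{J^-}$ whose induced graded morphism is the identity, so its $(c',c)$-component $\phi^-_{c',c}\colon p^{-1}\gr_c\L\to p^{-1}\gr_{c'}\L$ vanishes unless $c'\leqslant_\theta c$, i.e.\ unless $\tau(c')\geq\tau(c)$, and equals $\id$ on the diagonal. Unwinding the definitions, for $v\in V_c$ the $V_{c'}$-component of $f^*_{\theta_\circ}\circ f_c(v)$ is the unique global section of $p^{-1}\gr_{c'}\L$ whose restriction to $J^-$ equals $\phi^-_{c',c}(v|_{J^-})$; connectedness of $I_{\theta_\circ-\pi}$ then forces $f^*_{c'}\circ f_c=0$ when $\tau(c')<\tau(c)$ and $f^*_c\circ f_c=\id_{V_c}$. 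Running the same argument on the overlap $J^+:=I_{\theta_\circ}\cap I_{\theta_\circ+\pi}$, where $<_\theta$ agrees with $\tau_{\theta_\circ}$, yields condition (2) for $f^!_{\theta_\circ}$.

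For (ii), given $\lambda\colon(\L^0,\L^0_\bullet)\to(\L^1,\L^1_\bullet)$, put $g:=\Gamma(\R,p^{-1}\lambda)$. By Proposition \ref{split}(2), the conjugates $\eta^1_{\theta_\circ}\circ p^{-1}\lambda\circ(\eta^0_{\theta_\circ})^{-1}$ and $\eta^1_{\theta_\circ-\pi}\circ p^{-1}\lambda\circ(\eta^0_{\theta_\circ-\pi})^{-1}$ are both equal to $p^{-1}\gr\lambda$, so substituting into the formulas defining $f^{(i)}_{\theta_\circ}$ and $f^{*(i)}_{\theta_\circ}$ shows that $(f^{(1)}_{\theta_\circ})^{-1}\circ g\circ f^{(0)}_{\theta_\circ}$ and $f^{*(1)}_{\theta_\circ}\circ g\circ (f^{*(0)}_{\theta_\circ})^{-1}$ are graded; hence $g$ is a morphism in $\std_{\bm k}(C,\tau_{\theta_\circ})$. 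Faithfulness is immediate from the equivalence $\loc_{\bm k}\simeqto\rep_\Z({\bm k})$. For fullness, given a morphism $g$ of Stokes data, let $\lambda$ be the corresponding morphism of local systems. Compatibility of $g$ with $f$ and with $f^*$ translates, via the same identities read in reverse, into gradedness of $p^{-1}\lambda$ on $I_{\theta_\circ}$ and on $I_{\theta_\circ-\pi}$ respectively, whose images cover $S^1$; pointwise gradedness with respect to the splittings implies pointwise filtration-preservation, so $\lambda$ is a morphism in $\st^C_{\bm k}$.

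The most delicate bookkeeping is keeping track, on each of the two overlaps $J^\pm$, of which orientation of the total order on $C$ prevails; this determines the direction of triangularity of the transition automorphism between splittings, and hence the correct direction of the inequality on $\tau$ in conditions (1) and (2).
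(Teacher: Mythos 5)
Your proposal is correct and follows essentially the same route as the paper: both compute $f^*_{\theta_\circ}\circ f_{\theta_\circ}$ (resp.\ $f^!_{\theta_\circ}\circ f_{\theta_\circ}$) via restriction to the overlap $I_{\theta_\circ}\cap I_{\theta_\circ-\pi}$ (resp.\ $I_{\theta_\circ}\cap I_{\theta_\circ+\pi}$), exploit the triangularity of the filtration-compatible transition $\eta_{\theta_\circ\mp\pi}\circ\eta_{\theta_\circ}^{-1}$ with identity graded part together with the (reversed, resp.\ matching) order at $\theta_\circ\mp\pi/2$, and invoke Proposition \ref{split}(2) for compatibility of morphisms, giving full faithfulness. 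Your spelling-out of fullness (gradedness on the two intervals covering $S^1$ implies filtration-preservation) is just a more explicit version of the paper's one-line appeal to the same proposition.
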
 
          \begin{proof}
          Let 
          $r^{\theta,\theta'}: V_{\gr \L}\to \Gamma (I_\theta\cap I_{\theta'},\gr\L)$            
          be the restriction map for $\theta,\theta'\in \R$.
          The composition 
          $f^*_{\theta_\circ}\circ f_{\theta_\circ}$
          can be described as the composition:
          $$ (r^{\theta_\circ,\theta_\circ-\pi})^{-1} \circ
          \Gamma(I_{\theta_\circ}\cap I_{\theta_\circ-\pi},\eta_{\theta_\circ-\pi})\circ
          \Gamma(I_{\theta_\circ}\cap I_{\theta_\circ-\pi},\eta_{\theta_\circ}^{-1})\circ
           r^{\theta_\circ,\theta_\circ-\pi}.$$
           The fact that  $\eta_{\theta_\circ-\pi}$ and $\eta_{\theta_\circ}$ are splitting 
          implies  
          $f^*_{\theta_\circ, c}\circ f_{\theta_\circ, c}=\id_{V_c}$.
          Since $\eta_{\theta_\circ-\pi}$ and $\eta_{\theta_\circ}$
          are compatible with filtration, 
          we have $f^*_{\theta_\circ, c'}\circ f_{\theta_\circ, c}=0$
          if $c <_{\theta_\circ-\pi/2} c'$.
          This proves the condition $(1)$ of the Definition \ref{stokes data} 
          because $c <_{\theta_\circ-\pi/2} c'$ is equivalent to 
          $\tau_{\theta_\circ}(c')<\tau_{\theta_\circ}(c)$.          
          Similarly, 
          the composition $f^!_{\theta_\circ}\circ f_{\theta_\circ}$ is described as 
          the following composition: 
          $$ (r^{\theta_\circ,\theta_\circ+\pi})^{-1} \circ
          \Gamma(I_{\theta_\circ}\cap I_{\theta_\circ+\pi},\eta_{\theta_\circ+\pi})\circ
          \Gamma(I_{\theta_\circ}\cap I_{\theta_\circ+\pi},\eta_{\theta_\circ}^{-1})\circ
           r^{\theta_\circ,\theta_\circ+\pi}.$$ 
          Since  $c<_{\theta_\circ+\pi/2} c'$ is equivalent to 
          $\tau_{\theta_\circ}(c)<\tau_{\theta_\circ}(c')$, 
          we have the condition $(2)$ of Definition \ref{stokes data}.
          
          Let $(\L,\L_\bullet)$ and $(\L',\L'_\bullet)$ be objects in ${\sf St}^C_{\bm k}$.
          Let $\lambda:\L\to\L'$ be a morphism of local systems.
          It induces a morphism $g:=\Gamma(\R, p^{-1}\lambda):(V_\L,T_\L)\to (V_{\L'},T_{\L'})$.
          Proposition \ref{split}, (2) implies that $\lambda$ is compatible with Stokes filtration 
          if and only if $g$ is compatible with Stokes data constructed above.
          This shows that the correspondence gives a fully faithful functor
          $\mathfrak{A}_{\theta_\circ}:{\sf St}^C_{\bm k}\to {\sf Std}_{\bm k}(C,\tau_{\theta_\circ})$.          
          \end{proof}        
          We shall prove 
          the essential surjectivity of  $\mathfrak{A}_{\theta_\circ}$. 
          We recall a classification result of Stokes filtered local systems with a fixed graded Stokes structure.
          For a Stokes filtered local system $(\L,\L_\bullet)$, 
          $\mathscr{E}nd(\L)$ is equipped with a natural Stokes filtration.
          We put $\mathcal{A}ut^{<0}(\L):=\id_\L+\mathscr{E}nd(\L)_{<0}\subset \mathscr{E}nd(\L)_{\leqslant0}$.
          A local section of $\mathcal{A}ut^{<0}(\L)$
          is a local endomorphism $\lambda:\L\to \L$ of the local system which is compatible 
          with the Stokes filtration such that the graded morphism $\gr(\lambda)$ is identity.
          For a set of local systems $(\mathscr{G}_c)_{c\in C}$ indexed by $C$, 
          put $\mathscr{G}:=\bigoplus_{c\in C}\mathscr{G}_c$ and 
          define a Stokes filtration on $\mathscr{G}$ by 
          $\mathscr{G}_{\leqslant c,\theta}:=\bigoplus_{c'\leqslant_\theta c}\mathscr{G}_{c'}$.
          \begin{lemma}[{\cite[Proposition 1.42]{sabint}}]\label{graded classification}
           The set of isomorphism classes of Stokes filtered local systems $(\L,\L_\bullet)$  with
           an isomorphism from 
           the graded part $\gr\L$ to $\mathscr{G}$
           is identified with $H^1(S^1;\mathcal{A}ut^{<0}(\mathscr{G}))$. 
          \end{lemma}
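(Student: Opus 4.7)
The plan is to identify both sides with the set of isomorphism classes of torsors under the sheaf of groups $\mathcal{A}ut^{<0}(\mathscr{G})$ on $S^1$. Equip the fixed model $\mathscr{G}=\bigoplus_{c\in C}\mathscr{G}_c$ with its canonical Stokes filtration $\mathscr{G}_{\leqslant c,\theta}=\bigoplus_{c'\leqslant_\theta c}\mathscr{G}_{c'}$. Given a pair consisting of a Stokes filtered local system $(\L,\L_\bullet)$ and an isomorphism $\iota:\gr\L\simeqto\mathscr{G}$, define the sheaf $\mathscr{T}_{\L,\iota}$ on $S^1$ whose local sections over $U$ are isomorphisms $\L|_U\simeqto\mathscr{G}|_U$ of Stokes filtered local systems whose associated graded is $\iota$. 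By construction, two such local sections differ by an automorphism of $\mathscr{G}$ respecting the Stokes filtration and restricting to the identity on $\gr$, i.e.\ by a section of $\mathcal{A}ut^{<0}(\mathscr{G})$, so $\mathscr{T}_{\L,\iota}$ is canonically a pseudo-torsor under $\mathcal{A}ut^{<0}(\mathscr{G})$.

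The crucial non-formal input is condition (3) of Definition \ref{def stokes}, which says precisely that $\mathscr{T}_{\L,\iota}$ is locally non-empty. Hence it is a genuine torsor and defines an isomorphism class in $H^1(S^1;\mathcal{A}ut^{<0}(\mathscr{G}))$; this gives a well-defined map of pointed sets from the left hand side, because any iso $(\L,\L_\bullet)\to(\L',\L'_\bullet)$ intertwining $\iota$ and $\iota'$ transports one torsor to the other.

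For the inverse, I would start from a \v{C}ech $1$-cocycle $(g_{\alpha\beta})$ valued in $\mathcal{A}ut^{<0}(\mathscr{G})$ on a sufficiently fine open cover $\{U_\alpha\}$ of $S^1$ and glue copies of $(\mathscr{G}|_{U_\alpha},\mathscr{G}_\bullet|_{U_\alpha})$ along the $g_{\alpha\beta}$. Because each $g_{\alpha\beta}$ preserves the filtration by $\leqslant_\theta c$, the subsheaves glue to a Stokes filtration; because each $g_{\alpha\beta}$ induces the identity on $\gr$, the graded pieces glue canonically to $\mathscr{G}$, yielding $\iota$; and axiom (3) for the resulting object is automatic since the tautological section of $\mathscr{T}_{\L,\iota}$ over $U_\alpha$ supplies the required local splitting. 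Cohomologous cocycles $g'_{\alpha\beta}=h_\alpha g_{\alpha\beta}h_\beta^{-1}$ produce isomorphic pairs via the coboundary data $(h_\alpha)$, and a direct unwinding shows this is inverse to the torsor construction. I do not expect any serious obstacle: the geometric content is concentrated in axiom (3), and once local triviality is available, the rest is standard non-abelian \v{C}ech bookkeeping for a sheaf of (possibly non-abelian) groups on $S^1$.
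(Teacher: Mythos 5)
Your proof is correct and takes essentially the same route as the paper: the paper only sketches the construction of $(\L,\L_\bullet)$ from a \v{C}ech cocycle for the covering by intervals and refers to \cite{sabint} for the rest, which is exactly your gluing step. Your torsor-of-splittings formulation of the forward direction, with local non-emptiness supplied by condition (3) of Definition \ref{def stokes}, correctly fills in the part the paper leaves as ``standard''.
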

          \begin{proof}
          The proof is standard and shown in more general contexts.
          Here, we  only give the construction of the Stokes filtered local system
          from a class $\alpha \in H^1(S^1;\mathcal{A}ut^{<0}(\mathscr{G}))$. 
          Let $(I_k)_{k=1}^\ell$ be a covering on $S^1$ by open intervals 
          such that $I_k\cap I_{k'}=\emptyset$ if $|k-k'|\neq 1, \ell-1$.
          Then $\alpha$ is represented by a \v{C}ech cocycle
          $(\alpha_k)_{k=1}^\ell$ where $\alpha_k\in H^0(I_k\cap I_{k+1};\mathcal{A}ut^{<0}(\mathscr{G}))$ 
          for $k=1,2,\dots,\ell-1$
          and $\alpha_\ell\in H^0(I_\ell\cap I_{1};\mathcal{A}ut^{<0}(\mathscr{G}))$.
          Then, by gluing, there exists a unique (up to isomorphisms) local system $\L$ with
          isomorphisms $\beta_k: \L|_{I_k}\simeqto \mathscr{G}|_{I_k}$ such that
          $\alpha_k=\beta_{k+1}(\beta_k)^{-1}$ for $k=1,2,\dots, \ell-1$ and $\alpha_\ell=\beta_1\circ (\beta_\ell)^{-1}$.
          The Stokes filtration on $\L$ is defined via $\beta_k$ and it is well defined 
          and its graded part $\gr\L$ is isomorphic to $\mathscr{G}$ since $\alpha_k$
          is a local section of $\mathcal{A}ut^{<0}(\mathscr{G})$.
          \end{proof}
          \begin{theorem}
           The functor $\mathfrak{A}_{\theta_\circ}:{\sf St}^C_{\bm k}\to {\sf Std}_{\bm k}(C,\tau_{\theta_\circ})$
           constructed in Proposition $\ref{object}$ is 
           essentially surjective, and hence gives an equivalence of categories.
          \end{theorem}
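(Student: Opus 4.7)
The plan is to construct a quasi-inverse to $\mathfrak{A}_{\theta_\circ}$ by invoking Lemma~\ref{graded classification}. Given Stokes data $\big((V,T),(V_c,T_c)_{c\in C},f,f^*\big)$, let $\mathscr{G}_c$ be the local system on $S^1$ corresponding to $(V_c,T_c)$, and form $\mathscr{G}:=\bigoplus_{c\in C}\mathscr{G}_c$ equipped with its canonical Stokes filtration. The goal is to manufacture a cohomology class $\alpha\in H^1(S^1;\mathcal{A}ut^{<0}(\mathscr{G}))$ from the Stokes data whose associated Stokes filtered local system is mapped back to the given Stokes data by $\mathfrak{A}_{\theta_\circ}$.

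To that end, I would use the two-element open cover $\{I_+,I_-\}$ of $S^1$, where $I_+$ (resp.\ $I_-$) is the image in $S^1$ of $I_{\theta_\circ}\subset\R$ (resp.\ $I_{\theta_\circ-\pi}\subset\R$). Their intersection has two connected components, centered near $\theta_\circ-\pi/2$ and near $\theta_\circ+\pi/2$. On both components the local system $\mathscr{G}$ is canonically trivialized by the fiber $\bigoplus_c V_c$, so I may take $f^*\circ f\in\mathrm{End}(\bigoplus_cV_c)$ as the transition on the first component and $f^!\circ f$ on the second. The asymmetry arises because, lifted to the universal cover $p:\R\to S^1$, the second arc lies in $I_{\theta_\circ}\cap I_{\theta_\circ+\pi}$ rather than in $I_{\theta_\circ}\cap I_{\theta_\circ-\pi}$, so a monodromy twist by $\bigoplus_c T_c^{-1}$ intervenes.

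The key verification is that these transitions are local sections of $\mathcal{A}ut^{<0}(\mathscr{G})$. Since $<_{\theta+\pi}$ is the reverse of $<_\theta$, the normalization $c_1<_{\theta_\circ+\pi/2}\cdots<_{\theta_\circ+\pi/2}c_m$ characterizing $\tau_{\theta_\circ}$ gives $\tau_{\theta_\circ}(c')<\tau_{\theta_\circ}(c)\Leftrightarrow c<_{\theta_\circ-\pi/2}c'$. Condition~(1) of Definition~\ref{stokes data} thus translates to $(f^*\circ f-\id)(V_c)\subset\bigoplus_{c'<_{\theta_\circ-\pi/2}c}V_{c'}=\mathscr{G}_{<c,\theta_\circ-\pi/2}$, and an analogous computation from condition~(2) yields $(f^!\circ f-\id)(V_c)\subset\mathscr{G}_{<c,\theta_\circ+\pi/2}$. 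Lemma~\ref{graded classification} then produces a Stokes filtered local system $(\mathscr{L},\mathscr{L}_\bullet)$ with $\gr\mathscr{L}\cong\mathscr{G}$, together with trivializations $\beta_\pm:\mathscr{L}|_{I_\pm}\simeqto\mathscr{G}|_{I_\pm}$ realizing this cocycle.

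Finally, I would confirm that $\mathfrak{A}_{\theta_\circ}(\mathscr{L},\mathscr{L}_\bullet)$ is isomorphic to the original Stokes data. By uniqueness in Proposition~\ref{split}(1), the $\beta_\pm$ must coincide with the splittings $\eta_{[I_{\theta_\circ}]}$ and $\eta_{[I_{\theta_\circ-\pi}]}$, and tracing through the construction of $f_{\theta_\circ}$ and $f^*_{\theta_\circ}$ in Proposition~\ref{object} shows that they recover the original $f$ and $f^*$ under the canonical identification $V_\mathscr{L}\cong V$. The main obstacle I anticipate is the bookkeeping of the monodromy twist on the arc near $\theta_\circ+\pi/2$: one has to verify that the transition must be taken to be $f^!\circ f$ rather than $f^*\circ f$, and this is precisely the reason why the definition of Stokes data equips $(V,T)$ with the two maps $f$ and $f^*$ rather than with a single splitting.
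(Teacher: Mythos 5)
Your proposal is correct and follows essentially the same route as the paper's proof: you build the graded local system $\mathscr{G}$, use $f^*\circ f$ and $f^!\circ f$ as transition maps on the two arcs of the cover $p(I_{\theta_\circ})\cup p(I_{\theta_\circ+\pi})$, check via conditions (1) and (2) of Definition \ref{stokes data} that they are sections of $\mathcal{A}ut^{<0}(\mathscr{G})$, and invoke Lemma \ref{graded classification} before verifying recovery of the original data through the uniqueness of the splittings. The only difference is that you spell out the monodromy twist and the recovery step, which the paper leaves as ``obvious by construction.''
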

          \begin{proof}
          Let $\big{(}(V,T),(V_c,T_c)_{c\in C}, f, f^*\big{)}$ be an object in ${\sf Std}_{\bm k}(C,\tau_{\theta_\circ})$.
          Let $\mathscr{G}_c$ be the local system corresponding to $(V_c,T_c)$.
          Set $\mathscr{G}:=\bigoplus_{c\in C}\mathscr{G}_c$. 
          Define a local endomorphism 
          $\lambda^*:\mathscr{G}|_{p(I_{\theta_\circ-\pi}\cap I_{\theta_\circ})}
          \to \mathscr{G}|_{p(I_{\theta_\circ-\pi}\cap I_{\theta_\circ})}$
          by the following composition:
          $$\Gamma(I_{\theta_\circ-\pi}\cap I_{\theta_\circ}, p^{-1}\mathscr{G}) 
          \xleftarrow{\sim}  \bigoplus_c V_c 
          \xrightarrow{f^*\circ f} \bigoplus_c V_c
          \simeqto \Gamma(I_{\theta_\circ-\pi}\cap I_{\theta_\circ}, p^{-1}\mathscr{G})$$
          where the first and the third isomorphisms are the restriction maps 
          regarding $V_c$ as $\Gamma(\R, p^{-1}\mathscr{G}_c)$.
          By Definition \ref{stokes data} (1), $\lambda^*$ is a local section of 
          $\mathcal{A}ut^{<0}(\mathscr{G})$ on $p(I_{\theta_\circ-\pi}\cap I_{\theta})$.
          Similarly, we can define a local endomorphism 
          $\lambda^!:\mathscr{G}|_{p(I_{\theta_\circ-\pi}\cap I_{\theta})}
          \to \mathscr{G}|_{p(I_{\theta_\circ-\pi}\cap I_{\theta})}$
          by using $f^!\circ f$ and it defines a local section of 
          $\mathcal{A}ut^{<0}(\mathscr{G})$ on $p(I_{\theta_\circ-\pi}\cap I_{\theta})$.
          The pair $(\lambda^*,\lambda^!)$ defines a class in 
          $H^1(S^1;\mathcal{A}ut^{<0}(\mathscr{G}))$
          as \v{C}ech cocycle 
          for the covering $S^1=p(I_{\theta_\circ})\cup p(I_{\theta_{\circ}+\pi})$.
          By Lemma \ref{graded classification}, it defines a Stokes filtered local system 
          $(\L,\L_\bullet)$. The fact that 
          $\mathfrak{A}_{\theta_\circ}(\L,\L_\bullet)\simeq \big{(}(V,T),(V_c,T_c)_{c\in C}, f, f^*\big{)}$
          is obvious by the construction of $\mathfrak{A}_{\theta_\circ}$ and the proof of Lemma \ref{graded classification}.
          \end{proof}
          \begin{remark}
          By this theorem, the notion of Stokes data introduced here is equivalent to 
          the notion of Stokes data introduced in \cite{herexa}.
          \end{remark}
          \subsection{Mutations on Stokes data}\label{mutations on std}
              \begin{definition} Let $\big{(}(V,T),(V_\idx,T_\idx)_{\idx\in \is}, f, f^*\big{)}$ be an object in 
              ${\sf Std}_{\bm k}(\is,\tau)$.
               For each $i\in\{1,2,\dots, m\}$, we define the endomorphisms $R_i,R_i^*,L_i,L_i^!$ on $V$
               as follows:
               \begin{align*}
                R_i&:=\id_V-f_{\tau^{-1}(i)}\circ f_{\tau^{-1}(i)}^*, \\
                R_i^*&:=\id_V-T\circ f_{\tau^{-1}(i)}\circ T_{\tau^{-1}(i)}^{-1}\circ f_{\tau^{-1}(i)}^*, \\
                L_i&:=\id_V-f_{\tau^{-1}(i)}\circ f_{\tau^{-1}(i)}^!, \\
                L_i^!&:=\id_V-T^{-1} \circ f_{\tau^{-1}(i)}\circ T_{\tau^{-1}(i)} \circ f_{\tau^{-1}(i)}^!.
               \end{align*} 
              \end{definition}
              We remark that $T^{-1}\circ R_i^*\circ T=L_i$ and $T\circ L_i^!\circ T^{-1}=R_i$.
              We also have $R^*_i\circ R_i=R_i\circ R_i=R_i$, and $L^!_i\circ L_i=L_i\circ L_i =L_i$
              by easy computation.             
              \begin{proposition}
              Let $\big{(}(V,T),(V_\idx,T_\idx)_{\idx\in \is}, f, f^*\big{)}$ be an object in 
              ${\sf Std}_{\bm k}(\is,\tau)$.
              For $i\in \{2,\dots, m\}$, we define 
              \begin{align*}
               (\R_i f)_\idx&:=\begin{cases}
                                R_{i}\circ f_\idx & (\tau(\idx)=i-1) \\
                                f_\idx & \text{otherwise}
                               \end{cases}, \\
                (\R_i f^*)_\idx&:=\begin{cases}
                                        f_\idx^*\circ R^*_i  & (\tau(\idx)=i-1)\\
                                       f_\idx^* & \text{otherwise}.
                                      \end{cases}
              \end{align*}
             Then the tuple $\big{(}(V,T),(V_\idx,T_\idx)_{\idx \in \is}, \R_i f, \R_i f^*\big{)}$  
             is an object of ${\sf Std}_{\bm k}(\is, (i,\ i-1)\circ \tau)$.     
             Similarly, for $i\in\{1,\dots, m-1\}$, we define       
               \begin{align*}
               (\mathbb{L}_i f)_\idx&:=\begin{cases}
                                L_{i}\circ f_\idx & (\tau (\idx)=i+1) \\
                                f_\idx & \text{otherwise}
                               \end{cases}, \\
                (\mathbb{L}_i f^*)_\idx&:=\begin{cases}
                                        f_\idx^*\circ T\circ L_i^{!}\circ T^{-1}  & (\tau(\idx)=i+1)\\
                                       f_\idx^* & \text{otherwise}.
                                      \end{cases}
              \end{align*}              
              Then the tuple $\big{(}(V,T),(V_\idx,T_\idx)_{\idx \in \is}, \mathbb{L}_i f, \mathbb{L}_i f^*\big{)}$  
              is an object of $\std_{\bm k}(\is, (i,\ i+1)\circ \tau)$.                
              \end{proposition}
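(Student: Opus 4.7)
The plan is to verify Definition~\ref{stokes data} directly for the modified tuple; I treat the $\R_i$ case in detail and indicate how to adapt for $\mathbb{L}_i$ at the end. Fix $c_0 := \tau^{-1}(i-1)$ and $c_1 := \tau^{-1}(i)$, and set $\tau' := (i,\ i-1)\circ\tau$, so that $\tau'$ swaps the labels of $c_0$ and $c_1$ and fixes the rest. By construction, $(\R_i f)_\idx$ and $(\R_i f^*)_\idx$ differ from $f_\idx$ and $f_\idx^*$ only when $\idx = c_0$. I first check that $\R_i f$ is still an isomorphism: setting $\alpha := f_{c_1}^*\circ f_{c_0}:V_{c_0}\to V_{c_1}$, the identity $R_i\circ f_{c_0} = f_{c_0} - f_{c_1}\circ \alpha$ shows $\R_i f = f\circ \widetilde A$, where $\widetilde A$ is the triangular automorphism of $\bigoplus_\idx V_\idx$ that sends $v_{c_1}$ to $v_{c_1}-\alpha(v_{c_0})$ and acts as the identity on all other slots. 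Remark~\ref{2.4} then forces $\R_i f^*$ to be an isomorphism as well.

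Verifying axiom~(1) of Definition~\ref{stokes data} for $\tau'$ reduces to computing $(\R_i f^*)_{\idxx}\circ(\R_i f)_\idx$ in the cases that can possibly differ from the old $f_{\idxx}^*\circ f_\idx$, namely those involving $\idx = c_0$ or $\idxx = c_0$. When neither index is $c_0$ the composition is unchanged, and since the only pair swapped by $\tau\mapsto\tau'$ is $\{c_0,c_1\}$ (one of whose members is absent here), the old axiom applies. The remaining cases are three: $(\idx,\idxx)=(c_0,c_0)$ is handled by the stated idempotency $R_i^*\circ R_i = R_i$, which collapses the expression to $f_{c_0}^*\circ f_{c_0} - f_{c_0}^*\circ f_{c_1}\circ f_{c_1}^*\circ f_{c_0}$, equal to $\id_{V_{c_0}}$ because $f_{c_0}^*\circ f_{c_1}=0$ (since $\tau(c_0)<\tau(c_1)$); $(\idx,\idxx)=(c_0,c_1)$ collapses to $0$ via $f_{c_1}^*\circ f_{c_1}=\id$; and $\idxx=c_0$ with $\idx\notin\{c_0,c_1\}$ subject to $\tau'(c_0)<\tau'(\idx)$ forces $\tau(\idx)>i$, so the old axiom kills both $f_{c_0}^*\circ f_\idx$ and $f_{c_1}^*\circ f_\idx$ and the whole expression vanishes.

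Axiom~(2) is verified in the same spirit. From $(\R_i f^!)_\idx = T_\idx^{-1}\circ(\R_i f^*)_\idx\circ T$ and the relation $T^{-1}\circ R_i^*\circ T = L_i$ recorded just before the proposition, one finds $(\R_i f^!)_{c_0} = f_{c_0}^!\circ L_i$, while the other components are unchanged. The diagonal case $(\idx,\idxx)=(c_0,c_0)$ is settled by the auxiliary identity $L_i\circ R_i = L_i$, which follows from $f_{c_1}^!\circ f_{c_1}=\id_{V_{c_1}}$ by a one-line expansion; the off-diagonal vanishings come from the old condition~(2) combined with the idempotency of $R_i$, in strict parallel with the case analysis above. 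The $\mathbb{L}_i$ statement is the mirror: the same arguments go through with $R_i, R_i^*$ replaced by $L_i, L_i^!$, the conjugations by $T^{\pm1}$ adjusted accordingly, and $\tau' = (i,\ i+1)\circ\tau$. The main obstacle is purely organizational — for each pair $(\idx,\idxx)$ one must identify which of the old inequalities $\tau(\idxx)<\tau(\idx)$ or $\tau(\idx)<\tau(\idxx)$ is needed and which of $c_0, c_1$ enters each factor — but every individual check is a one-line algebraic manipulation from the stated idempotencies and the old Stokes axioms.
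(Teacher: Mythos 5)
Your proof is correct and, at bottom, runs the same direct verification as the paper: the only pair whose relative order changes under $(i,\ i-1)\circ\tau$ is $\{c_0,c_1\}=\{\tau^{-1}(i-1),\tau^{-1}(i)\}$, and the decisive computations are the diagonal identity at $c_0$ and the vanishing of $(\R_i f^*)_{c_1}\circ(\R_i f)_{c_0}$, which you and the paper both reduce to $f_{c_0}^*\circ f_{c_1}=0$ and $f_{c_1}^*\circ f_{c_1}=\id$. Two points where you diverge, both harmless and arguably cleaner: for bijectivity of $\R_i f$ you factor $\R_i f=f\circ\widetilde A$ with $\widetilde A$ unipotent, whereas the paper argues injectivity of $R_i\circ f_{c_0}$ together with triviality of the intersection of its image with $f_{c_1}(V_{c_1})$; and you route the diagonal checks through the prerecorded idempotency $R_i^*\circ R_i=R_i$ and the derived identity $L_i\circ R_i=L_i$, where the paper simply expands and cancels. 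Invoking Remark \ref{2.4} for the invertibility of $\R_i f^*$ is exactly how the paper handles it. One bookkeeping slip: for axiom (1) there are four, not three, cases involving $c_0$ that require a check. You omit $\idx=c_0$, $\idxx\notin\{c_0,c_1\}$ with $\tau'(\idxx)<\tau'(c_0)$, i.e.\ $\tau(\idxx)<i-1$, where $f_{\idxx}^*\circ R_i\circ f_{c_0}=f_{\idxx}^*f_{c_0}-f_{\idxx}^*f_{c_1}f_{c_1}^*f_{c_0}$ vanishes by two applications of the old axiom, exactly as in your third case, so nothing breaks. Similarly, in the $c_0$-diagonal of axiom (2), after $L_i\circ R_i=L_i$ one still needs the old condition (2) (namely $f_{c_1}^!\circ f_{c_0}=0$) to conclude $f_{c_0}^!\circ L_i\circ f_{c_0}=\id$; your ``in strict parallel'' phrase covers this, but it deserves to be stated.
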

              \begin{proof}
              We show the first half of the proposition. The second half is shown similarly. 
              Let $\idx, \idxx$ be  two distinct elements of $C$.
              We only consider the case where $\tau(\idx)=i-1$ and $\tau(\idxx)=i$
              since the discussion is easier in the other cases.
              
              We first show that $(\R_i f)_c=R_i\circ f_c$ is injective. 
              Let $u$ be any vector in $V_c$ and assume $R_if_cu=0$.
              It implies $f_cu=f_{c'}\circ f^*_{c'}\circ f_c u$.
              Next, we show that the intersection of the image $(\R_i f)_c(V_c)$ and 
              $f_{c'}V_{c'}$ is $(0)$.
              Let $v$ be a vector in $V_c$ such that $\R_i f v\in V_{c'}$. 
              It implies that $f_cv-f_{\idxx} f_{\idxx}^*f v \in V_{c'}$. 
              Hence $f_c v\in V_\idxx$, and it implies $v=0$.
              The first and the second assertions show that $\R_i f$ defines an isomorphism
              of vector spaces.      
              
              Finally, we show that $(\R_i f, \R_i f^*)$ satisfies the conditions (1) and (2) in Definition \ref{stokes data}.
              We only need to consider the case where $\tau(\idx)=i-1$ and $\tau(\idxx)=i$.
              We have
              \begin{align}\label{A1}
              (\R_i f^*)_{c}\circ (\R_i f)_{c}&=(f^*_c-f^*_cT f_{c'} T^{-1}_{c'}f^*_{c'})(f_c-f_{c'}f_{c'}^*f_c)\\ 
               \label{A2}                       &=f^*_cf_c-f^*_cTf_{c'}T^{-1}_{c'}f^*_{c'}f_c
                                                     +f^*_cTf_{c'}T^{-1}_{c'}f^*_{c'}f_c\\
               \label{A3}                       &=\id_{V_c}. 
              \end{align}
              Here,  we used $f_c^* f_{c'}=0$ from (\ref{A1}) to (\ref{A2})
              and $f_c^* f_c=\id\ (f_{c'}^*f_{c'}=\id)$ from (\ref{A2}) to (\ref{A3}).
              We also have 
              \begin{align*}              
              (\R_i f^*)_{c'}\circ (\R_i f)_{c}&=f_{c'}^*\circ(f_c-f_{c'} f^*_{c'} f_c)\\
                                                    &=0.
              \end{align*}
              by using $f_{c'}^* f_{c'}=\id$. They show the condition (1).
              The condition (2) is shown similarly, which proves the proposition (see Remark \ref{2.4}).
              \end{proof}
              We put 
              $\sigma_i:\std_{\bm k}(C,\tau)\to \std_{\bm k}(C,(i\ i+1)\circ \tau)$ 
              to be the functor 
              $\big{(}(V,T),(V_\idx,T_\idx)_{\idx\in \is}, f, f^*\big{)}\mapsto 
              \big{(}(V,T),(V_\idx,T_\idx)_{\idx\in \is}, \R_{i+1}f, \R_{i+1}f^*\big{)}$
              and put 
              $\sigma_i^{-1}:\std_{\bm k}(C,\tau)\to \std_{\bm k}(C,(i\ i+1)\circ \tau)$
              to be the functor 
              $\big{(}(V,T),(V_\idx,T_\idx)_{\idx\in \is}, f, f^*\big{)}\mapsto
               \big{(}(V,T),(V_\idx,T_\idx)_{\idx\in \is}, \mathbb{L}_i f,\mathbb{L}_i f^*\big{)}$.
              These functors act trivially on the morphisms. 
              It can easily be checked that $\sigma_i, \sigma_i^{-1}$ actually define functors.
              \begin{proposition}\label{braid relation}
              The functors $\{\sigma_i,\sigma_i^{-1}\}_{i=1}^{m-1}$
              satisfy the following braid relations.
               \begin{enumerate}
              \item \label{braid-1}
              $\sigma_i\circ\sigma_i^{-1}=\mathrm{id},$ $\sigma_i\circ\sigma_i^{-1}=\mathrm{id}$,
                            \item \label{braid-2}
              $\sigma_i\circ\sigma_{i+1}\circ\sigma_i=\sigma_{i+1}\circ\sigma_i\circ\sigma_{i+1} $, and
              \item \label{braid-3}
              $\sigma_i\circ\sigma_j=\sigma_j\circ\sigma_i\ \ \  (\ |i-j|\geq 2\ )$.
             \end{enumerate}
             In particular, by the relation $(\ref{braid-1})$, $\sigma_i$ and $\sigma_i^{-1}$ are equivalences of categories.
              \end{proposition}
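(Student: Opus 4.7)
The plan is to verify each of the three braid relations by direct algebraic computation, exploiting the locality of each functor: from the defining formulas $\sigma_i$ changes only the components $f_{\tau^{-1}(i)}, f_{\tau^{-1}(i+1)}$ of $f$ together with their starred analogues, leaving every other component untouched. Throughout, I write $c_k := \tau^{-1}(i+k-1)$ for as many $k$ as needed, and rely on four kinds of identity: the idempotences $f^*_c \circ f_c = \id_{V_c} = f^!_c \circ f_c$; the orthogonality relations $f^*_{c'} \circ f_c = 0$ when $\tau(c') < \tau(c)$ and $f^!_{c'} \circ f_c = 0$ when $\tau(c) < \tau(c')$; the already-noted identities $T \circ L^!_i \circ T^{-1} = R_i$ and $T^{-1} \circ R^*_i \circ T = L_i$; and the derived consequence $f^*_c \circ T \circ f_c = T_c$.

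Relation (3) is immediate from locality, since $\sigma_i$ touches only the components indexed by $\tau^{-1}(i), \tau^{-1}(i+1)$ while $\sigma_j$ touches those indexed by $\tau^{-1}(j), \tau^{-1}(j+1)$, and these four indices are disjoint when $|i-j| \geq 2$. For relation (1), I apply $\sigma_i^{-1}$ and then $\sigma_i$; the two successive modifications of $f_{c_2}$ are $L_i$ and then $R'_{i+1} = \id - f_{c_1} f^*_{c_1}$ (the $c_1$-data being unchanged in the intermediate step). Expanding $R'_{i+1} \circ L_i \circ f_{c_2}$ and using $f^*_{c_1} f_{c_2} = 0$ together with $f^*_{c_1} f_{c_1} = \id$ gives back $f_{c_2}$. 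The parallel check on $f^*_{c_2}$ rewrites the $\sigma_i^{-1}$-step via $T L^!_i T^{-1} = R_i$ and then cancels using $f^!_{c_2} f_{c_1} = 0$ and $f^*_{c_1} T f_{c_1} = T_{c_1}$. The reverse composition $\sigma_i^{-1} \circ \sigma_i = \id$ is entirely symmetric.

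The main obstacle is relation (2). Both composites $\sigma_i \sigma_{i+1} \sigma_i$ and $\sigma_{i+1} \sigma_i \sigma_{i+1}$ produce the same final ordering on $\{c_1, c_2, c_3\}$, leave $f_{c_3}, f^*_{c_3}$ untouched, and modify $f_{c_2}, f^*_{c_2}$ to the same expressions $\tilde f_{c_2} := (\id - f_{c_3} f^*_{c_3}) f_{c_2}$ and $\tilde f^*_{c_2} := f^*_{c_2}(\id - T f_{c_3} T^{-1}_{c_3} f^*_{c_3})$ after one mutation each; these assertions follow by tracing through the definitions. The nontrivial point is the double modification of $f_{c_1}$. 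Direct bookkeeping gives
\begin{equation*}
  f^{\mathrm{LHS}}_{c_1} = (\id - f_{c_3} f^*_{c_3})(\id - f_{c_2} f^*_{c_2}) f_{c_1}, \qquad
  f^{\mathrm{RHS}}_{c_1} = (\id - \tilde f_{c_2} \tilde f^*_{c_2})(\id - f_{c_3} f^*_{c_3}) f_{c_1}.
\end{equation*}
Setting $u := (\id - f_{c_3} f^*_{c_3}) f_{c_1}$, the identity $f^*_{c_3} \circ (\id - f_{c_3} f^*_{c_3}) = 0$ yields $f^*_{c_3} u = 0$, and $f^*_{c_2} f_{c_3} = 0$ (condition (1), since $\tau(c_2) < \tau(c_3)$) reduces $\tilde f^*_{c_2} u$ to $f^*_{c_2} f_{c_1}$. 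Hence $f^{\mathrm{RHS}}_{c_1} = u - (\id - f_{c_3} f^*_{c_3}) f_{c_2} f^*_{c_2} f_{c_1} = (\id - f_{c_3} f^*_{c_3})(\id - f_{c_2} f^*_{c_2}) f_{c_1} = f^{\mathrm{LHS}}_{c_1}$.

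The parallel verification for $f^*_{c_1}$ is structurally identical but requires careful bookkeeping of the conjugations by $T, T_{c_k}$: the dual orthogonality $f^!_{c_3} f_{c_2} = 0$ plays the role of $f^*_{c_2} f_{c_3} = 0$, and the identities $f^*_c T f_c = T_c$ and $T L^!_i T^{-1} = R_i$ interchange starred with unstarred operators at each step; no identities beyond these are required. The last assertion of the proposition, that $\sigma_i$ and $\sigma_i^{-1}$ are equivalences of categories, is immediate from relation (1).
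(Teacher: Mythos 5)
Your proposal is correct and follows essentially the same route as the paper: all three relations are checked componentwise from the definitions, with (3) immediate, (1) a short cancellation, and (2) reduced to the single nontrivial component $f_{c_1}$ (and its starred analogue), where your identity $f^{\mathrm{RHS}}_{c_1}=(\id-\tilde f_{c_2}\tilde f^*_{c_2})(\id-f_{c_3}f^*_{c_3})f_{c_1}=(\id-f_{c_3}f^*_{c_3})(\id-f_{c_2}f^*_{c_2})f_{c_1}$ is exactly the paper's operator identity $R'_{i+2}\circ R_{i+2}=R_{i+2}\circ R_{i+1}$, just verified after precomposition with $f_{c_1}$ via the orthogonality relations rather than as an abstract identity. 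The identities you cite ($f^!_{c_3}f_{c_2}=0$, $f^*_cTf_c=T_c$, $TL_i^!T^{-1}=R_i$) do suffice for the starred half, so the argument is complete in the same sense as the paper's.
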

              \begin{proof}
              The component  
              $(\mathbb{L}_{i}\mathbb{R}_{i+1}f)_c$ is the composition
              $L_{i+1}R_{i+1}f_c$ if $\tau(c)=i$,
              otherwise $f_c$. Easy calculation shows that $L_{i+1} R_{i+1} f_c=f_c$. 
              Similarly, we have $\mathbb{L}_{i}\mathbb{R}_{i+1}f^*=f^*$.
              This implies $\sigma_i^{-1}\circ\sigma_i=\id$. The relation $\sigma_i\circ\sigma_i^{-1}=\id$
              is shown similarly. Hence we get $(\ref{braid-1})$.
              
              We shall prove (\ref{braid-2}). 
              Let $\big{(}(V,T),(V_\idx,T_\idx)_{\idx\in \is}, f, f^*\big{)}$ be an object of $\std_{\bm k}(C,\tau)$.
              Then we have the following: 
              \begin{align*}
               (\R_{i+1} \R_{i+2} \R_{i+1} f)_c=
                 \begin{cases}
                 R_{i+2} R_{i+1} f_c & \tau(c)=i      \\   
                 R_{i+1} f_c & \tau(c)=i+1 \\     
                  f_c& \text{otherwise},              
                 \end{cases}\\
                (\R_{i+2} \R_{i+1} \R_{i+2} f)_c=
                 \begin{cases}
                 R'_{i+2} R_{i+2} f_c & \tau(c)=i      \\   
                 R_{i+1} f_c & \tau(c)=i+1 \\     
                  f_c& \text{otherwise},
                 \end{cases}                                                
              \end{align*}
              where $R'_{i+2}:=\id_V- R_{i+2}f_{\tau^{-1}(i+1)}f_{\tau^{-1}(i+1)}^*R_{i+2}^*$.
              Put $R'_{i+1}:=R_{i+2}$. 
              Then we have $R^{\prime}_{i+2}\circ R^{\prime}_{i+1}=R_{i+2}\circ R_{i+1}$, 
              which implies $\R_{i+1} \R_{i+2} \R_{i+1} f=\R_{i+2} \R_{i+1} \R_{i+2} f$.
              Indeed, we have 
              \begin{align*}
              R^{\prime}_{i+2}\circ R^{\prime}_{i+1}
              &=(\id-R_{i+2}\circ(\id-R_{i+1})\circ R^*_{i+2})\circ R_{i+2}\\
              &=R_{i+2}-R_{i+2}\circ(\id-R_{i+1})\circ R_{i+2}\\
              &=R_{i+2}\circ R_{i+1}\circ R_{i+2},             
              \end{align*}
              and 
              \begin{align*}
              R_{i+2}\circ R_{i+1}\circ(\id-R_{i+2})
              &=R_{i+2}\circ(\id-f_{\tau^{-1}(i+1)}\circ f^*_{\tau^{-1}(i+1)})\circ (f_{\tau^{-1}(i+2)}\circ      
              f^*_{\tau^{-1}(i+2)})\\
              &=R_{i+2}\circ(f_{\tau^{-1}(i+2)}\circ f^*_{\tau^{-1}(i+2)})\\
              &=R_{i+2}\circ(\id-R_{i+2})=0.
              \end{align*}
               The relation $\R_{i+2}\R_{i+1}\R_{i+2}f^*=\R_{i+1}\R_{i+2}\R_{i+1}f^*$ is shown similarly.  
               The  relation $(\ref{braid-3})$ is obvious by the definition.        
              \end{proof}
              Let $\mathrm{Br}_m$ denote the braid group of $m$-strands
              with the standard generator $\sigma_1,\dots,\sigma_{m-1}$.
              For an element $\sigma\in\mathrm{Br}_m$,  
              let $\bar{\sigma}$ denote the image of $\sigma$
              via the quotient map $\mathrm{Br}_m\to S_m$ to the symmetric group of degree $m$. 
              In particular,  $s_i=\overline{\sigma_i}$ are the permutations $s_i=(i\ i+1)$, 
               $(i=1,2,\dots, m-1)$.            
              \begin{definition}\label{braid action}
               For $\sigma\in \mathrm{Br}_m$, define a functor 
               $\mathbb{M}_\sigma:{\sf Std}_{\bm k}(\is,\tau)\simeqto 
               {\sf Std}_{\bm k}(\is,\bar{\sigma}\circ\tau)$
               as the composition of mutations defined above.       
              \end{definition}
%
%
%
%
%
%
%
%
%
%
%
%
%
          \subsection{Stokes factors and Mutations}
          Fix a finite subset $C\subset \C$ and a bijection $\tau:C\simeqto\{1,\dots, m\}$.
          Let  $\big{(}(V,T),(V_\idx,T_\idx)_{\idx\in \is}, f, f^*\big{)}$ be an object of  $\std(C,\tau)$.
          \begin{definition}
          We define a map $(\cdot)_R:S_m\to\mathrm{Br}_m$ by $(s)_R:=\sigma_{i_1}\cdots\sigma_{i_k}$
          for a reduced expression $s=s_{i_1}\cdots s_{i_k}$. This map is well defined
          $($see, for example, \cite[Chapter 4, \S1.5, Proposition 5]{boulie2}, \cite[Theorem 2]{matgen}$)$.
          \end{definition}
          For $s\in S_m$, set $I_i(s):=\left\{j\in\{1,\dots, m\}\mid i<j, s(i)>s(j)\right\}$,
          and $I(s):=\{(i, j)\mid j\in I_i(s)\}$.
          \begin{notation}
          Let $A=\{a_1,a_2,\dots, a_k\}$ be a finite ordered set with $a_1<a_2<\cdots<a_k$.
          Let $(h_a)_{a\in A}$ be a sequence of endomorphisms of $V$
          indexed by $A$. 
          Then, we use the following notation$:$ 
          \begin{align*}
           \prod_{a\in A}^{\leftarrow}h_a:=h_{a_k}\circ h_{a_{k-1}}\circ\cdots\circ h_{a_1}, 
           & \
           \prod_{a\in A}^{\rightarrow}h_a:=h_{a_1}\circ h_{a_{2}}\circ\cdots\circ h_{a_k}.           
          \end{align*}
          \end{notation}
          Recall that for $\sigma\in \mathrm{Br}_m$, 
          we have $\mathbb{M}_\sigma:{\sf Std}_{\bm k}(\is,\tau)\simeqto 
               {\sf Std}_{\bm k}(\is,\bar{\sigma}\circ\tau)$ (Definition \ref{braid action}).
          Define $\sigma f$ and $\sigma f^*$  
          by $\mathbb{M}_\sigma\big{(}(V,T),(V_\idx,T_\idx)_{\idx\in \is}, f, f^*\big{)}=
           \big{(}(V,T),(V_\idx,T_\idx)_{\idx\in \is}, \sigma f, \sigma f^*\big{)}$.
           \begin{lemma}\label{braidact}
           For $s\in S_m$, we have the following formulas: 
           \begin{align}\label{fc}
           \big{(}(s)_R{f}\big{)}_c&=\left( \prod_{i\in I_{\tau(c)}(s)}^{\leftarrow}R_i\right)\circ f_c,\\
           \label{fcc}
           \big{(}(s)_R{f^*}\big{)}_c&=f^*_c\circ\left( \prod_{i\in I_{\tau(c)}(s)}^{\rightarrow}R_i^* \right).
           \end{align}
           \end{lemma}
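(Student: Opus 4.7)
The plan is to prove both formulas simultaneously by induction on the length $\ell(s)$ of $s \in S_m$. For the base case $\ell(s) = 0$, $s = e$, the braid $(e)_R$ is the identity, and $I_i(e) = \emptyset$ for every $i$, so each identity reduces to $f_c = f_c$ and $f_c^* = f_c^*$. For the inductive step, the idea is to pick a descent $k$ of $s$ (that is, an index with $s(k) > s(k+1)$) and set $s' := s s_k$, so that $\ell(s') = \ell(s) - 1$. Appending $s_k$ to a reduced expression of $s'$ yields a reduced expression of $s$, whence $(s)_R = (s')_R \sigma_k$ in $\mathrm{Br}_m$ and $\mathbb{M}_{(s)_R} = \mathbb{M}_{(s')_R} \circ \mathbb{M}_{\sigma_k}$. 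One first applies $\sigma_k$, passing from the bijection $\tau$ to $\tau'' := s_k \tau$ and from $(f, f^*)$ to $(\sigma_k f, \sigma_k f^*)$, then invokes the inductive hypothesis for $s'$ on the mutated data.

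Writing $c_i := \tau^{-1}(i)$, the definition of $\sigma_k$ produces the following comparison between the operators $R''_i$ (respectively $R^{*''}_i$) built from the mutated data and the original $R_i$ (respectively $R^*_i$): $R''_i = R_i$ and $R^{*''}_i = R^*_i$ for $i \notin \{k, k+1\}$; $R''_k = R_{k+1}$ and $R^{*''}_k = R^*_{k+1}$; and $R''_{k+1} = \id_V - R_{k+1} f_{c_k} f_{c_k}^* R^*_{k+1}$, with a parallel expression for $R^{*''}_{k+1}$. Combined with the inversion-set identities
\[
I_{k+1}(s') = I_k(s) \setminus \{k+1\}, \qquad I_k(s') = I_{k+1}(s),
\]
(where $k+1 = \min I_k(s)$ is guaranteed by the descent), the case $\tau(c) = k$ matches the target through the factorization $\prod_{i \in I_k(s)}^{\leftarrow} R_i = \bigl(\prod_{i \in I_k(s) \setminus \{k+1\}}^{\leftarrow} R_i\bigr) \circ R_{k+1}$ together with $(\sigma_k f)_c = R_{k+1} f_c$, while the cases $\tau(c) = k+1$ and $\tau(c) = i_0 > k+1$ are immediate because the relevant $R''_i$ in the products all coincide with the corresponding $R_i$.

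The remaining case $\tau(c) = i_0 < k$ is the main obstacle. The descent condition $s(k) > s(k+1)$ rules out the subcase in which exactly one of $k, k+1$ lies in $I_{i_0}(s)$: one would need either $s(i_0) > s(k) > s(k+1)$ (both in) or $s(i_0) \le s(k+1) < s(k)$ (neither in), so the swap subcase is vacuous. When $\{k, k+1\} \subset I_{i_0}(s)$, matching the two products reduces to the operator identity
\[
R''_{k+1} \circ R_{k+1} = R_{k+1} \circ R_k.
\]
Expanding and cancelling, this is equivalent to $R_{k+1} f_{c_k} f^*_{c_k} R^*_{k+1} R_{k+1} = R_{k+1} f_{c_k} f^*_{c_k}$, which follows from two short computations: $R^*_{k+1} R_{k+1} = R_{k+1}$, coming from $f^*_{c_{k+1}} f_{c_{k+1}} = \id$; and $f^*_{c_k} f_{c_{k+1}} = 0$, which is condition (1) of Definition \ref{stokes data} applied to $\tau(c_k) < \tau(c_{k+1})$. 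The formula for $f^*$ is handled symmetrically, with the analogous identity $R^*_{k+1} R^{*''}_{k+1} = R^*_k R^*_{k+1}$ proved the same way but invoking condition (2) of Definition \ref{stokes data} via $f^!_{c_{k+1}} f_{c_k} = 0$.
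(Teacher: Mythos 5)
Your overall strategy is the same induction on $\ell(s)$ that the paper uses: peel a simple reflection $s_k$ (a descent) off a reduced word, apply $\sigma_k$ first, identify the mutated operators (your $R''_i$, the paper's $R'_i$), and reduce the comparison of products to the identity $R''_{k+1}\circ R_{k+1}=R_{k+1}\circ R_k$, which is precisely the relation $R'_{i+2}\circ R'_{i+1}=R_{i+2}\circ R_{i+1}$ established in the proof of Proposition \ref{braid relation}; your verification of that identity, and of the identities $I_{k+1}(s')=I_k(s)\setminus\{k+1\}$ and $I_k(s')=I_{k+1}(s)$, is correct. However, there is a genuine error in the case $\tau(c)=i_0<k$: the claim that the descent $s(k)>s(k+1)$ forces either both or neither of $k,k+1$ to lie in $I_{i_0}(s)$ is false. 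Your trichotomy omits the middle range $s(k)>s(i_0)>s(k+1)$, in which $k+1\in I_{i_0}(s)$ but $k\notin I_{i_0}(s)$ (and correspondingly $k\in I_{i_0}(s')$ while $k+1\notin I_{i_0}(s')$). Concretely, take $m=3$ and $s=s_1s_2$, so $s(1)=2$, $s(2)=3$, $s(3)=1$, with descent $k=2$ and $i_0=1$: then $I_1(s)=\{3\}$ contains $k+1$ but not $k$. So the case analysis as written leaves an actually occurring subcase unproved.

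The gap is easily repaired, and the repair is what the paper's bookkeeping encodes. In this subcase the induction hypothesis produces the factor $R''_k=R_{k+1}$ at the slot of index $k$ in the product over $I_{i_0}(s')$, while the target product over $I_{i_0}(s)$ has the factor $R_{k+1}$ at the slot of index $k+1$; since no index lies strictly between $k$ and $k+1$, and the remaining indices and factors are untouched (with $R''_i=R_i$ there), replacing the index $k$ by $k+1$ does not change the relative order of the factors, so the two ordered products coincide with no further identity needed. This is exactly the content of the paper's relation $I_{i_0}(s)=s_k\bigl(I_{i_0}(s')\bigr)$ for $i_0\neq k$, combined with $R''_k=R_{k+1}$, and the same remark fixes the corresponding subcase for the $f^*$ formula. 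With this subcase added, your argument coincides with the paper's proof.
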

           \begin{proof}
           For $s\in S_m$, 
           let $\ell(s)$
           be the length of $s$. 
           We shall show the lemma by induction on $\ell(s)$.
           If $\ell(s)=0$, $s$ is identity and the lemma is obvious.
           If $\ell(s)>0$, then there exists some $s_j$ such that $s=s'\cdot s_j$ with $\ell(s)=\ell(s')+1$.
           Set 
           \begin{align*} 
           &f'_c:=\begin{cases}
                      f_c & (\tau(c)\neq j)\\
                      R_{j+1}\circ f_c& (\tau(c)=j),
                   \end{cases} \\
           &R_i':=\begin{cases}
                    R_i& (i\neq j, j+1)\\
                    R_{j+1}& (i=j)\\
                    \id-R_{j+1}\circ f_{c_j}\circ f_{c_j}^*\circ R_{j+1}^* &(i=j+1)
                    \end{cases}           
           \end{align*} where $c_j=\tau^{-1}(j)$.
           By induction hypothesis,
           we have 
           \begin{align*}
           \big{(}(s)_R{f}\big{)}_c=\left(\prod_{i\in I_{s_j\circ\tau(c)}(s')}^{\leftarrow}R'_i\right)\circ f'_c.
           \end{align*}
           Since $\ell(s)=\ell(s')+1$, we have $j+1\notin I_j(s')$,
           and 
           \begin{align*}
            I_k(s)=\begin{cases}
                       s_j(I_{s_j(k)}(s')) & k\neq j\\
                       s_j(I_{s_j(k)}(s'))\cup\{j+1\} & k=j.
                       \end{cases}
           \end{align*}
           If $j+1\notin I_{s_j\circ \tau(c)}(s')$, then 
           \begin{align*}
           \prod_{i\in I_{s_j\circ \tau(c)}(s')}^{\leftarrow}R_i'&=
           \prod^{\leftarrow}_{i\in s_j(I_{s_j\circ\tau(c)}(s'))}R_i,          
           \end{align*}
           hence we have (\ref{fc}).
           If $j+1\in I_{s_j\circ\tau(c)}(s')$, then $j\in I_{s_j\circ\tau(c)}(s')$ since $s'(j)<s'(j+1)$ by
           $j+1\notin I_j(s')$.
           By the proof of Proposition \ref{braid relation}, we have $R'_{j+1}\circ R'_j=R_{j+1}\circ R_j$.
           This implies (\ref{fc}). 
           The equation (\ref{fcc}) is shown similarly.
           \end{proof}
           As a special case,  we have the following.
           \begin{proposition}\label{2.16}
           Set $\Delta:=(w_0)_R\in \mathrm{Br}_m$ where $w_0\in S_m$ is the longest element,
           i.e., 
           $$w_0=
                        \begin{pmatrix}
                        1 & 2&\cdots & m\\
                        m& m-1&\cdots& 1
                        \end{pmatrix}.
           $$ 
           Then we have $\Delta f =(f^*)^{-1}$, and $\Delta f^*=(\bigoplus_cT_c)\circ f^{-1}\circ T^{-1}$.
           \end{proposition}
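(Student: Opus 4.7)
The plan is to specialize Lemma \ref{braidact} to the longest element $w_0 \in S_m$ and then verify both identities by unwinding the defining conditions of Stokes data. Since $w_0(i) = m+1-i$, the condition $w_0(i) > w_0(j)$ with $i < j$ reduces to $j > i$, so $I_k(w_0) = \{k+1, k+2, \dots, m\}$. Substituting into Lemma \ref{braidact} yields, for each $c$ with $\tau(c) = k$,
\[
(\Delta f)_c = R_m \circ R_{m-1} \circ \cdots \circ R_{k+1} \circ f_c, \qquad
(\Delta f^*)_c = f^*_c \circ R^*_{k+1} \circ R^*_{k+2} \circ \cdots \circ R^*_m.
\]

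To prove $\Delta f = (f^*)^{-1}$, I would check the equivalent componentwise identity $f^*_{c'} \circ (\Delta f)_c = \delta_{c,c'}\,\id_{V_c}$. The driving observation is that whenever $\tau(c') < j$, condition $(1)$ of Definition \ref{stokes data} gives $f^*_{c'} \circ f_{\tau^{-1}(j)} = 0$, hence $f^*_{c'} \circ R_j = f^*_{c'}$, i.e.\ the factor $R_j$ is absorbed on the left. A short three-way case analysis on $\tau(c')$ versus $k = \tau(c)$ then finishes the argument: if $\tau(c') < k$, all $R_j$'s disappear and $f^*_{c'} \circ f_c = 0$ by condition $(1)$; if $\tau(c') = k$, all $R_j$'s disappear and $f^*_c \circ f_c = \id_{V_c}$; and if $\tau(c') > k$, the factors with $j > \tau(c')$ disappear, leaving $f^*_{c'} \circ R_{\tau(c')}$ on the left, which vanishes because $f^*_{c'} \circ f_{c'} = \id_{V_{c'}}$.

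For the second identity, I would first observe that $R^*_i = T \circ L_i \circ T^{-1}$, which follows from the definitions together with the remark immediately after them. Combining this with $f^*_c \circ T = T_c \circ f^!_c$, which follows from $f^!_c := T_c^{-1} \circ f^*_c \circ T$, the interior conjugations telescope and yield
\[
(\Delta f^*)_c = T_c \circ f^!_c \circ L_{k+1} \circ L_{k+2} \circ \cdots \circ L_m \circ T^{-1}.
\]
The identity therefore reduces to showing $f^!_c \circ L_{k+1} \circ \cdots \circ L_m = (f^{-1})_c$, where $(f^{-1})_c \colon V \to V_c$ is the $c$-component of $f^{-1}$; equivalently, $f^!_c \circ L_{k+1} \circ \cdots \circ L_m \circ f_{c'} = \delta_{c,c'}\,\id_{V_{c'}}$ for all $c'$. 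This is verified by the symmetric three-case analysis, but now using condition $(2)$ of Definition \ref{stokes data}: whenever $\tau(c') < j$, one has $f^!_{\tau^{-1}(j)} \circ f_{c'} = 0$, hence $L_j \circ f_{c'} = f_{c'}$, and the absorption happens on the right instead of on the left. I expect the main obstacle to be purely bookkeeping — tracking the correct composition order and checking that in the subcase $\tau(c') > \tau(c)$ the factor $L_{\tau(c')}$ cleanly kills $f_{c'}$ via $L_{\tau(c')} \circ f_{c'} = f_{c'} - f_{c'} \circ f^!_{c'} \circ f_{c'} = 0$ — but no genuinely new ingredient is needed beyond Lemma \ref{braidact} and the two vanishing conditions of Definition \ref{stokes data}.
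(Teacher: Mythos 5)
Your proposal is correct and follows essentially the same route as the paper: both specialize Lemma \ref{braidact} to the longest element $w_0$ and then verify the inverse identities by direct computation using conditions (1) and (2) of Definition \ref{stokes data} together with the relation $T^{-1}\circ R_i^*\circ T=L_i$. The only cosmetic difference is that you check the compositions from the opposite side ($f^*\circ \Delta f=\id$ and $\Delta f^!\circ f=\id$, absorbing the factors $R_j$, $L_j$ one at a time), whereas the paper verifies $\Delta f\circ f^*\circ f=f$ and $f^!\circ f\circ \Delta f^!=f^!$ via telescoping sums; the two verifications are equivalent since $f$ and $f^*$ are isomorphisms.
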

           \begin{proof}
            Set $\Delta f^!:=(\bigoplus_cT_c)^{-1}\circ \Delta f^* \circ T$.
            We show $\Delta f \circ f^*\circ f=f$, and $f^!\circ f\circ \Delta f^!= f^!$.
            We use the notation $c_i=\tau^{-1}(i)\ (i=1,\dots , m)$.
            Then the restriction $\Delta f\circ f^* \circ f\mid_{V_{c_i}}$ is 
            \begin{align*}
              &\sum_{i\leq j} R_m\circ \cdots \circ R_{j+1}\circ f_{c_j} \circ f^*_{c_j}\circ f_{c_i}\\
              &=R_m\circ \cdots \circ R_{i+1} \circ f_{c_i}+
                \sum_{i< j} R_m\circ \cdots \circ R_{j+1}\circ f_{c_j} \circ f^*_{c_j}\circ f_{c_i}\\
              &=R_m\circ \cdots\circ  R_{i+1} \circ f_{c_i}+\sum_{i<j\leq m}R_m\circ \cdots R_{j+1}\circ f_{c_i}
                  -\sum_{i<j\leq m}R_m\circ \cdots \circ R_j\circ f_{c_i}\\
              &=f_{c_i}.
            \end{align*}
            We also see that the composition $f^!_{c_i}\circ f\circ \Delta f^!$ is 
            \begin{align*}             
              &\sum_{i\leq j}f^!_{c_i}\circ f_{c_j}\circ f^!_{c_j}\circ L_{j+1}\circ\cdots L_m \\
              &=f^!_{c_i}\circ L_{i+1}\circ \cdots \circ L_m+\sum_{i<j\leq m}f^!_{c_i}\circ L_{j+1}\circ \cdots \circ L_m 
                  -\sum_{i<j\leq m}f^!_{c_i}\circ L_{j}\circ \cdots \circ L_m\\
              &=f^!_{c_i}.
            \end{align*}
            This implies the proposition.
          \end{proof}
           For an endomorphism $g\in \End(\bigoplus_c V_c)$ and $(c, c')\in C\times C$, 
          let $g_{c c'}$ denote the component $V_c\to V_{c'}$.
          Using this notation, we define 
           \begin{align}
           &\End^\diamond\left(\bigoplus_{c\in C}V_c\right):=\left\{g\in \End\left(\bigoplus_{c\in C}V_c\right)
                             \middle| g_{cc}=\id_{V_c}\text{ for all } c\in C\right\}, \text{ and }
                             \label{endo 1}
                             \\
           &\mathrm{Sf}_{\tau}(s):=\left\{g\in \End^\diamond\left(\bigoplus_{c\in C}V_c\right)
                             \middle| g_{cc'} =0 
                             \text{ for }(\tau(c),\tau(c'))\notin I(s)\text{ with }\ c\neq c'\right\}, 
           \end{align}
           where $s$ is a element of $S_m$, and $\tau $ is an isomorphism $\tau: C\simeqto \{1,\dots, m\}$.
           \begin{remark}
           If $(i, j)\in I(s)$, then $i<j$.
           If $(i, j), (j, k)\in I(s)$, then $(i, k)\in I(s)$.
           These properties imply $\mathrm{Sf}_\tau(s)$ is a group.
           \end{remark}
           \begin{lemma}
            Take $s\in S_m$ and put $\sigma:=(s)_R$, then $(\sigma f)^{-1}\circ f\in \Sf_\tau(s)$.
           \end{lemma}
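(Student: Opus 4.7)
The plan is to reduce this to an explicit computation of $\sigma f$ via Lemma \ref{braidact}, together with the fact that $\Sf_\tau(s)$ is a group.

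By Lemma \ref{braidact}, for each $c\in C$ we have
$$ (\sigma f)_c = \Big(\prod^{\leftarrow}_{i\in I_{\tau(c)}(s)} R_i\Big)\circ f_c,$$
where $R_i = \id_V - f_{\tau^{-1}(i)}\circ f^*_{\tau^{-1}(i)}$. Expanding this product, each resulting term is obtained by selecting either the $\id_V$ summand or the $-f_{\tau^{-1}(i)}\circ f^*_{\tau^{-1}(i)}$ summand from each $R_i$. The term in which every factor contributes $\id_V$ equals $f_c$; in every other term, the \emph{leftmost} non-identity factor produces an $f_{\tau^{-1}(i)}$ at the left of the composition, so the whole term lies in $f_{\tau^{-1}(i)}(V_{\tau^{-1}(i)})$ for some $i\in I_{\tau(c)}(s)$. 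Gathering these contributions yields a decomposition
$$ (\sigma f)_c = f_c + \sum_{c'\colon \tau(c')\in I_{\tau(c)}(s)} f_{c'}\circ N_{c'c}, $$
for certain linear maps $N_{c'c}\colon V_c\to V_{c'}$.

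Next, set $g:=f^{-1}\circ\sigma f$, which is an endomorphism of $\bigoplus_c V_c$. Applying $f^{-1}$ to the identity above and using that $f^{-1}\circ f_{c'}$ is the inclusion of $V_{c'}$ into the $c'$-summand, one reads off the components of $g$: one has $g_{cc}=\id_{V_c}$, $g_{cc'}=N_{c'c}$ when $\tau(c')\in I_{\tau(c)}(s)$, and $g_{cc'}=0$ for all other pairs with $c\neq c'$. In the notation of the definition, this says precisely that $g\in \Sf_\tau(s)$.

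Finally, note that $(\sigma f)^{-1}\circ f = (f^{-1}\circ\sigma f)^{-1} = g^{-1}$. Since $\Sf_\tau(s)$ is a group (by the remark following its definition, which uses the transitivity of $I(s)$), we conclude $(\sigma f)^{-1}\circ f = g^{-1} \in \Sf_\tau(s)$, as desired. The only genuinely non-trivial step is the bookkeeping in the expansion of $\prod^{\leftarrow} R_i$: one must verify that every non-identity contribution is absorbed into an $f_{c'}$-image with $\tau(c')\in I_{\tau(c)}(s)$, which is exactly the observation about the leftmost non-identity factor above.
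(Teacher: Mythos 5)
Your proof is correct and follows essentially the same route as the paper: reduce via the group property of $\Sf_\tau(s)$ to showing $f^{-1}\circ\sigma f\in\Sf_\tau(s)$, then use Lemma \ref{braidact} and expand the product $\prod^{\leftarrow}R_i$ so that every non-identity term lands in $f_{c'}(V_{c'})$ with $\tau(c')\in I_{\tau(c)}(s)$. Your explicit bookkeeping of the leftmost non-identity factor just spells out the step the paper leaves implicit.
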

           \begin{proof}
           Since $\Sf_\tau(s)$ is a group, it is enough to show $f^{-1}\circ \sigma f\in \Sf_\tau (s)$.
           We have  
           \begin{align*}\sigma f\mid_{V_c}
            &=\left(\prod_{i\in I_{\tau(c)}(s)}^{\leftarrow}R_i\right)\circ f_c \\
            &=\left(\prod^{\leftarrow}_{(\tau(c),\tau(c')\in I(s))}(\id_{V} -f_{c'}\circ f_{c'}^*)\right)\circ f_c.           
            \end{align*}
           This implies $f^{-1}\circ \sigma f\in \Sf_\tau (s)$.
           \end{proof}
%
%
%
%
%
%
%
%
%
%
%
%
           \subsection{Stokes factors and mutations for Stokes filtered local systems}\label{factor and local system}
           Let $C$ be a set of $m$-points in $\C$.
          Let $\theta_\circ$ be a $C$-generic real number. 
          Recall that 
          we have the bijection $\tau_{\theta_\circ}:C\simeqto\{1,2,\dots, m\}$ 
          such that $\tau_{\theta_\circ}(c)<\tau_{\theta_\circ}(c')\Leftrightarrow c<_{{\theta_\circ}+\pi/2}c'$.
          We put $c_i:=\tau_{\theta_\circ}^{-1}(i)$.
          We also have an equivalence of categories 
          $$\mathfrak{A}_{\theta_\circ}:{\sf St}^C_{\bm k}\to {\sf Std}_{\bm k}(C,\tau_{\theta_\circ}). $$
          We also recall that we have a functor 
          $$\mathbb{M}_\sigma:{\sf Std}_{\bm k}(C,\tau_{\theta_\circ})
          \simeqto{\sf Std}_{\bm k}(C,\bar{\sigma}\circ\tau_{\theta_\circ})$$
          for $\sigma\in\mathrm{Br}_m$.
          The purpose of \S\ref{factor and local system} is to show the following: 
          \begin{theorem}\label{mutation gives stokesfactor}
          Let $\theta_\circ,\theta_\circ'$ be $C$-generic real numbers. 
          Then there exists an element $\sigma\in \mathrm{Br}_m$ such that 
          $\mathbb{M}_\sigma\circ\mathfrak{A}_{\theta_\circ}\simeq \mathfrak{A}_{\theta_\circ'}$.
          Moreover, 
          if $\theta_\circ \ge \theta_\circ' \ge \theta_\circ-\pi$, 
          then we can take $\sigma=(\tau_{\theta_\circ'}\circ\tau_{\theta_\circ}^{-1})_R$.
          \end{theorem}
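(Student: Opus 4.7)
The plan is to reduce to elementary steps, each crossing a single Stokes direction, and to identify each such step with one braid generator.

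For the reduction, choose $C$-generic angles $\theta_\circ=\theta_0>\theta_1>\cdots>\theta_k=\theta_\circ'$ so that each interval $(\theta_j,\theta_{j-1})$ contains exactly one non-$C$-generic real number $\theta_*^{(j)}$, at which the Stokes direction of a single pair $(c^{(j)},c^{\prime(j)})$ passes through $\theta_*^{(j)}+\pi/2$. The permutation $\tau_{\theta_j}\circ\tau_{\theta_{j-1}}^{-1}$ is then an adjacent transposition $s_{i_j}$. Since $\theta_\circ-\theta_\circ'\le\pi$ and the two Stokes directions of each pair are antipodal on $S^1$, every pair is crossed at most once in the whole sequence, so $s_{i_k}\cdots s_{i_1}=\tau_{\theta_\circ'}\circ\tau_{\theta_\circ}^{-1}$ is automatically a reduced expression. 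By the definition of $(\cdot)_R$ this gives $(\tau_{\theta_\circ'}\circ\tau_{\theta_\circ}^{-1})_R=\sigma_{i_k}\cdots\sigma_{i_1}$, and granting the elementary case below, the theorem follows by iterated composition.

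For a single elementary step $\theta_1>\theta_*>\theta_2$ with the crossed pair $(c,c')$ at positions $(i,i+1)$ in $\tau_{\theta_1}$, the goal is to verify $f_{\theta_2}=\R_{i+1}(f_{\theta_1})$ and $f^*_{\theta_2}=\R_{i+1}(f^*_{\theta_1})$ on the nose. Both $I_{\theta_1}$ and $I_{\theta_2}$ are $C$-good, and their overlap $J=[I_{\theta_1}]\cap[I_{\theta_2}]$ is a sub-arc of $(\theta_*-\pi/2,\theta_*+\pi/2)$ which misses both Stokes directions of $(c,c')$ while containing exactly one Stokes direction for every other pair. The composition $\phi:=\eta_{[I_{\theta_2}]}\circ\eta_{[I_{\theta_1}]}^{-1}$ on $J$ is a section of $\mathcal{A}ut^{<0}(\gr\L)$ by Proposition \ref{split}. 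Applying uniqueness of splittings to $C$-good subintervals of $J$ that isolate individual pairs other than $(c,c')$ forces all off-diagonal components of $\phi$ for those other pairs to vanish, leaving only the single component between $\gr_c\L$ and $\gr_{c'}\L$ determined by the ordering $<_\theta$ on $J$.

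Unwinding the definitions of $f_{\theta_\circ}$ over $J$, this single off-diagonal component yields $(f_{\theta_2})_{c''}=(f_{\theta_1})_{c''}$ for every $c''\neq c$ and $(f_{\theta_2})_c=(f_{\theta_1})_c-(f_{\theta_1})_{c'}\circ\psi$ for some $\psi\colon V_c\to V_{c'}$. A parallel argument on the antipodal interval $I_{\theta_\circ-\pi}$ gives $(f^*_{\theta_2})_{c''}=(f^*_{\theta_1})_{c''}$ for every $c''\neq c$; in particular $(f^*_{\theta_2})_{c'}=(f^*_{\theta_1})_{c'}$. The Stokes-data axiom $(f^*_{\theta_2})_{c'}\circ(f_{\theta_2})_c=0$ from Definition \ref{stokes data}(1), applicable since $\tau_{\theta_2}(c')<\tau_{\theta_2}(c)$, combined with $(f^*_{\theta_1})_{c'}\circ(f_{\theta_1})_{c'}=\id$, then forces $\psi=(f^*_{\theta_1})_{c'}\circ(f_{\theta_1})_c$, yielding $(f_{\theta_2})_c=(\id-(f_{\theta_1})_{c'}\circ(f^*_{\theta_1})_{c'})\circ(f_{\theta_1})_c=R_{i+1}\circ(f_{\theta_1})_c$, which is exactly $\R_{i+1}(f_{\theta_1})$. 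The corresponding formula for $f^*$ is obtained symmetrically, producing $R^*_{i+1}$ in place of $R_{i+1}$.

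The main obstacle is the accurate identification of $\phi$ as supported on a single off-diagonal component in the correct direction, together with the parallel analysis on the antipodal interval; careful tracking of sign conventions relating the ordering $<_\theta$ for $\theta\in J$ and the ordering used to define $\tau_{\theta_1}$ via $<_{\theta_1+\pi/2}$ is essential to land on $R_{i+1}$ rather than some other $\mathrm{id}$-plus-rank-one Stokes factor. Once this elementary identification is secured, Proposition \ref{braid relation} and the reduction above assemble the full theorem.
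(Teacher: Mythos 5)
Your single-crossing analysis is essentially sound and, for a crossing involving exactly one pair, it does reproduce the paper's Corollary \ref{factorcomputation}; the genuine gap is in the reduction. You assume that between $\theta_\circ'$ and $\theta_\circ$ each non-generic angle $\theta_*$ is the Stokes direction (shifted by $\pi/2$) of a \emph{single} pair, so that every step is an adjacent transposition $s_{i_j}$ handled by a rank-one correction. The theorem, however, is stated for an arbitrary finite $C\subset\C$, and several pairs can share a Stokes direction: this happens exactly when two differences $c_1-c_2$ and $c_3-c_4$ are parallel, e.g.\ whenever three exponents are collinear. This is not a degenerate curiosity for this paper --- for the complete intersections of \S 7 one has $C_X=\{\mathrm{T}_X\omega_k\}\cup\{0\}$, so $0,\ \mathrm{T}_X,\ -\mathrm{T}_X$ are collinear when $r_X$ is even --- and one cannot perturb $C$, since the exponents are fixed by the Stokes filtration. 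At such a crossing there is no intermediate $C$-generic angle separating the flips: the permutation jump $s_{\theta_*}$ is a product of (possibly non-commuting) adjacent transpositions, the automorphism $\phi$ on the overlap has several off-diagonal components, and pinning them all down and matching the result with the composite mutation $\mathbb{M}_{(s_{\theta_*})_R}$ requires an argument of the type of Lemma \ref{braidact} (or the uniqueness of Stokes factorization from \cite{bjr} that the paper uses); your Definition \ref{stokes data}(1) trick determines a single unknown map, not a triangular block of them. So as written the proof only covers $C$ in ``general position'' and does not prove the stated theorem.

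Two smaller points. First, the justification ``applying uniqueness of splittings to $C$-good subintervals of $J$'' does not parse: $J$ misses both Stokes directions of $(c,c')$, so it contains no $C$-good subinterval; the correct argument is that $\phi$ is filtration-compatible at every point of $J$, and for any pair other than $(c,c')$ the order $\leqslant_\theta$ flips inside $J$, which kills the corresponding off-diagonal component of a morphism of local systems on the connected interval $J$. Second, ``the formula for $f^*$ is obtained symmetrically'' hides a real asymmetry: condition (1) of Definition \ref{stokes data} at $\theta_2$ gives nothing for the $f^*$-component (the relevant composition $f^*_{\theta_2,c}\circ f_{\theta_2,c'}$ is not among those it annihilates); you need to pass to $f^!_{\theta}=\bigoplus_{c''}T_{c''}^{-1}\circ f^*_\theta\circ T$, run the wall-crossing on the antipodal interval, use condition (2), and then conjugate back using $R^*_{i+1}=T\circ L_{i+1}\circ T^{-1}$. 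Both of these are fixable; the reduction to single-pair crossings is the step that must be replaced, either by treating block crossings directly or by switching to the paper's global route via the factorization $f_{\theta_\circ-\pi}^{-1}\circ f_{\theta_\circ}\in\Sm(\theta_\circ)$ into Stokes factors and Proposition \ref{2.16}.
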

          Take an object $(\L,\L_\bullet)\in{\sf St}^C_{\bm k}$ and 
          put $\big{(}(V,T),(V_c,T_c)_{c\in C},f_\theta, f^*_\theta\big{)}:=\mathfrak{A}_\theta(\L,\L_\bullet)$
          for a $C$-generic real number $\theta\in\R$.
          \begin{definition}           
           For a real number $\theta\in \R$ , we define the following:
           \begin{align*}
           \mathrm{R}(\theta)&:=\{(c, c')\in C\times C\mid \Im (e^{-\i\theta}(c-c'))=0, \text{and } \Re (e^{-\i\theta}(c-c'))>0\},\\
           \mathrm{Sf}(\theta)&:=\left\{g\in \End^\diamond\left(\bigoplus_{c\in C}V_c\right) \middle|
                                              g_{cc'}=0 \text{ for }
                                           (c, c')\notin 
                                            \mathrm{R}(\theta)
                                            \text{ with } c\neq c'\right\}, \\
           \mathrm{Sm}(\theta)&:=\left\{ g\in \End^\diamond\left(\bigoplus_{c\in C}V_c\right)\middle|                                              
                                                  g_{cc'}=0 \text{ for } c'<_{\theta+\pi/2} c  \right\},
           \end{align*}
           where $\End^\diamond\left(\bigoplus_cV_c\right)$ is defined in $(\ref{endo 1})$.
           For a fixed $c\in C$, we also define $\mathrm{R}_c(\theta)$ as 
           the subset of elements $c'$ of $C$ such that $(c, c')$ is in $\mathrm{R}(\theta)$.
           \end{definition}
           For $C$-generic $\theta,\theta'\in \R$, set $s_{\theta,\theta'}:=\tau_{\theta'}\circ \tau_\theta^{-1}\in S_m$.
           For $\theta\in \R$, set $s_\theta:=s_{\theta+\varepsilon,\theta-\varepsilon}$ for $0<\varepsilon\ll 1$.
           \begin{lemma}
            If $\theta \ge \theta' \ge \theta-\pi$, then we have 
            \begin{align}
            I_{\tau_\theta(c)}(s_{\theta,\theta'})
            =\tau_\theta\left(\bigsqcup_{\theta>\theta''>\theta'}\mathrm{R}_c(\theta'')\right).
            \end{align}
           \end{lemma}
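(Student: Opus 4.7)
The plan is to translate both sides of the claimed equality into concrete statements about the argument of $c'-c \in \mathbb{C}$, and then verify them by elementary trigonometry. For convenience I introduce $\phi_\theta(w) := \Im(e^{-\i\theta}w)$; a direct computation from the definition of $<_{\theta+\pi/2}$ shows that, for $c \neq c' \in C$,
\begin{align*}
c <_{\theta+\pi/2} c' \iff \phi_\theta(c'-c) > 0,
\end{align*}
equivalently, writing $c'-c = \rho e^{\i\alpha}$ with $\rho>0$, this holds iff $\alpha \in (\theta,\theta+\pi) \pmod{2\pi}$. With this reformulation the reasoning becomes a manipulation of arcs on $S^1$.

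First I would unpack the left-hand side. Set $s := s_{\theta,\theta'} = \tau_{\theta'}\circ \tau_\theta^{-1}$. By definition $j = \tau_\theta(c') \in I_{\tau_\theta(c)}(s)$ means $\tau_\theta(c) < \tau_\theta(c')$ and $\tau_{\theta'}(c) > \tau_{\theta'}(c')$, which translates via the characterization of $\tau_{\theta_\circ}$ into
\begin{align*}
c <_{\theta+\pi/2} c' \quad\text{and}\quad c' <_{\theta'+\pi/2} c,
\end{align*}
that is $\phi_\theta(c'-c) > 0$ and $\phi_{\theta'}(c'-c) < 0$. In terms of $\alpha$ this is $\alpha \in (\theta,\theta+\pi) \cap (\theta'-\pi,\theta') \pmod{2\pi}$. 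Using the hypothesis $\theta - \pi \le \theta' \le \theta$ and unfolding the second interval modulo $2\pi$, a direct check shows that this intersection equals $(\theta'+\pi,\theta+\pi)\pmod{2\pi}$; equivalently, $\alpha - \pi \in (\theta',\theta)$, i.e.\ $c-c' \in \R_{>0}\cdot e^{\i\theta''}$ for some $\theta'' \in (\theta',\theta)$.

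Second, I would observe that the right-hand condition ``$c' \in \R_c(\theta'')$ for some $\theta''\in(\theta',\theta)$'' is by definition exactly the statement that $c - c' = r e^{\i\theta''}$ with $r>0$ and $\theta''\in(\theta',\theta)$. Hence the two sides describe the same set of $c'$, and applying $\tau_\theta$ gives the desired equality. Disjointness of the union $\bigsqcup_{\theta > \theta'' > \theta'} \R_c(\theta'')$ follows because, for fixed $c \neq c'$, the angle $\theta''$ with $c - c' \in \R_{>0}e^{\i\theta''}$ is determined modulo $2\pi$, while the interval $(\theta',\theta)$ has length at most $\pi < 2\pi$; so each $c'$ occurs in $\R_c(\theta'')$ for at most one such $\theta''$.

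The argument is essentially bookkeeping with angles; the only point that needs slight care is the reduction of the intersection $(\theta,\theta+\pi)\cap(\theta'-\pi,\theta')\pmod{2\pi}$ to a single arc, which is where the hypothesis $\theta - \pi \le \theta' \le \theta$ is actually used. Once this reduction is made explicit, everything else is mechanical.
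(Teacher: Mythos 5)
Your proof is correct and takes essentially the same route as the paper's: you translate membership in $I_{\tau_\theta(c)}(s_{\theta,\theta'})$ into the two conditions $c<_{\theta+\pi/2}c'$ and $c'<_{\theta'+\pi/2}c$, and then use the hypothesis $\theta\ge\theta'\ge\theta-\pi$ to reduce this to the existence of a (unique) $\theta''\in\,]\theta',\theta[$ with $c-c'\in\R_{>0}e^{\i\theta''}$, i.e.\ $c'\in\mathrm{R}_c(\theta'')$. The only difference is presentational: you compute the intersection of the two arcs for $\arg(c'-c)$ explicitly, whereas the paper argues via the unique sign change of $\varphi\mapsto\Im\big(e^{-\i\varphi}(c-c')\big)$ on $[\theta',\theta]$.
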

           \begin{proof}
            The condition 
            $c'\in \tau_\theta^{-1}(I_{\tau_\theta(c)}((s_{\theta,\theta'})))$
            is equivalent to 
            $c<_{\theta+\pi/2}c', c'<_{\theta'+\pi/2}c$.
            Consider the following function 
            $\mathrm{Im}:\varphi\mapsto \Im(e^{-\i\varphi}(c-c')),\ \varphi\in [\theta',\theta].$
            Then $c'\in \tau_\theta^{-1}(I_{\tau_\theta(c)}((s_{\theta,\theta'})))$ is 
            equivalent to 
            $\mathrm{Im}(\theta)<0<\mathrm{Im}(\theta')$.
            Since $\theta \ge \theta' \ge \theta-\pi$, 
            this is equivalent to 
            the existence and uniqueness of $\theta''\in ]\theta',\theta[$
            such that $\mathrm{Im}(\theta'')=0$, and $\mathrm{Im}(\theta''+\varepsilon')<0<\mathrm{Im}(\theta''-\varepsilon')$
            for a sufficiently small positive number $\varepsilon'$.
            This is equivalent to $c'\in \mathrm{R}_c(\theta'')$.
           \end{proof}
           \begin{corollary}
           We have 
           $\mathrm{Sf}(\theta)=\Sf_{\tau_{\theta+\varepsilon}}(s_\theta)$ for $(0<\varepsilon\ll 1)$.
           If $\theta$ is $C$-generic, then we have
           $\Sm(\theta)=\Sf_{\tau_\theta}(w_0)$.
           \end{corollary}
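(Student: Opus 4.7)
The plan is to verify both identities by unwinding the defining conditions, with the first reducing immediately to the preceding lemma and the second to an elementary fact about the longest element $w_0$.

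For the first equality $\mathrm{Sf}(\theta) = \mathrm{Sf}_{\tau_{\theta+\varepsilon}}(s_\theta)$, I would apply the preceding lemma with the substitution $\theta \mapsto \theta+\varepsilon$, $\theta' \mapsto \theta-\varepsilon$. For $\varepsilon > 0$ small this satisfies the hypothesis $\theta+\varepsilon \ge \theta-\varepsilon \ge (\theta+\varepsilon)-\pi$, and one can take $\varepsilon$ small enough that $\theta\pm\varepsilon$ are both $C$-generic and no Stokes direction of $C$ lies in $(\theta-\varepsilon,\theta+\varepsilon)$ other than possibly $\theta$ itself. Since $s_{\theta+\varepsilon,\theta-\varepsilon} = s_\theta$ by definition, the lemma yields
\[
I_{\tau_{\theta+\varepsilon}(c)}(s_\theta) \;=\; \tau_{\theta+\varepsilon}\!\Big(\bigsqcup_{\theta+\varepsilon > \theta'' > \theta-\varepsilon} \mathrm{R}_c(\theta'')\Big) \;=\; \tau_{\theta+\varepsilon}(\mathrm{R}_c(\theta)),
\]
the last step by the choice of $\varepsilon$. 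Now $g \in \mathrm{Sf}_{\tau_{\theta+\varepsilon}}(s_\theta)$ allows $g_{cc'} \ne 0$ (for $c \ne c'$) precisely when $\tau_{\theta+\varepsilon}(c') \in I_{\tau_{\theta+\varepsilon}(c)}(s_\theta)$, equivalently when $c' \in \mathrm{R}_c(\theta)$, equivalently when $(c,c') \in \mathrm{R}(\theta)$. This matches the defining condition of $\mathrm{Sf}(\theta)$, giving the identity.

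For the second identity $\mathrm{Sm}(\theta) = \mathrm{Sf}_{\tau_\theta}(w_0)$ under $C$-genericity, I would use the elementary observation that $w_0$ is the order-reversing permutation, so $I_i(w_0) = \{j \mid i < j\}$ and consequently $I(w_0) = \{(i,j) \mid i < j\}$. Hence $(\tau_\theta(c),\tau_\theta(c')) \in I(w_0)$ is equivalent to $\tau_\theta(c) < \tau_\theta(c')$, i.e.\ $c <_{\theta+\pi/2} c'$. By $C$-genericity, $\theta+\pi/2$ is not a Stokes direction, so for $c \ne c'$ this is exactly the negation of $c' <_{\theta+\pi/2} c$; matching the vanishing condition that defines $\mathrm{Sm}(\theta)$.

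I do not expect a substantive obstacle here: both parts are bookkeeping translations between the geometric language (Stokes rays, the partial order $<_\theta$) and the combinatorial language (inversion sets of permutations). The only mildly delicate point is choosing $\varepsilon$ small enough in the first part so that the interval $(\theta-\varepsilon,\theta+\varepsilon)$ isolates the ray at $\theta$ from all other Stokes directions of $C$; this is automatic from the finiteness of $C$.
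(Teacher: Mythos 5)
Your argument is correct and is exactly the route the paper intends: the corollary is stated with no proof precisely because it follows by applying the preceding lemma with $\theta\pm\varepsilon$ (so that the union of rays collapses to $\mathrm{R}_c(\theta)$) and by the elementary computation $I(w_0)=\{(i,j)\mid i<j\}$, which under $C$-genericity translates into the defining condition of $\mathrm{Sm}(\theta)$. Your attention to choosing $\varepsilon$ small enough to isolate the single Stokes direction at $\theta$ is the only point that needed saying, and you said it.
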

          Fix $C$-generic $\theta_\circ\in \R$.
          Take $\theta_i\ (i=1,\dots ,\ell)$ so that 
          $\theta_\circ>\theta_1>\theta_2>\cdots>\theta_\ell>\theta_\circ-\pi$, 
          $\{\theta_1,\dots,\theta_\ell\}=\{\theta\in \R\mid \theta_\circ-\pi<\theta<\theta_\circ$ and
          $\mathrm{R}(\theta)\neq \emptyset\}.$
          Then we have
          \[\{(c, c')\in C\times C\mid c<_{\theta_\circ+\pi/2}c'\}=\bigsqcup_i\mathrm{R}(\theta_i).\]
          \begin{lemma}[{\cite[Lemma 2]{bjr}}]
          For all $g\in \Sm(\theta_\circ)$, 
          there exists a unique element $(g_i)_i\in \prod_{1\leq i\leq \ell}\Sf(\theta_i)$ 
          such that 
          \begin{align}\label{gdc}
           g=g_\ell\circ g_{\ell-1}\circ\cdots \circ g_1, 
          \end{align}
          i.e., $\Sf(\theta_\ell)\times\cdots\times \Sf(\theta_1)\simeqto\Sm(\theta_\circ)$.
          \end{lemma}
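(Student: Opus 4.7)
The plan is to prove both existence and uniqueness by induction on $\ell$, peeling off the factor $g_\ell$ corresponding to the smallest angle first. The geometric backbone is a convexity-of-compositions principle: if $A$ has off-diagonal components supported only in positions $\mathrm{R}(\theta_a)$ and $B$ similarly in $\mathrm{R}(\theta_b)$, then $A\circ B$ has off-diagonal components supported only in positions $(c,c'')$ with $c-c''$ in the open convex cone generated by $e^{\i\theta_a}$ and $e^{\i\theta_b}$; indeed, $(c,c')\in \mathrm{R}(\theta_b)$ and $(c',c'')\in \mathrm{R}(\theta_a)$ give $c-c''=r_b e^{\i\theta_b}+r_a e^{\i\theta_a}$ with $r_a,r_b>0$. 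Taking $\theta_a=\theta_b$ as a special case shows that $\Sf(\theta_i)$ is indeed a subgroup of $\Sm(\theta_\circ)$. (For the induction to close it is convenient to reformulate the lemma for any ``convex'' subset $T$ of the Stokes angles, meaning that $T$ contains every Stokes angle lying between two of its elements; the product map $\prod_{\theta\in T}\Sf(\theta)\to \Sm_T$ remains bijective, with induction on $|T|$.)

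For uniqueness, write $g_k=\id+N_k$ and expand $g_\ell\circ g_{\ell-1}\circ\cdots\circ g_1 = \prod_{k=\ell}^{1}(\id+N_k)$ as a sum over strictly decreasing monomials $N_{k_1}\circ\cdots\circ N_{k_r}$ with $k_1>k_2>\cdots>k_r$. Iterating the convexity principle, each such monomial contributes to $\mathrm{R}(\theta)$-components only for $\theta$ in the convex hull of $\{\theta_{k_1},\ldots,\theta_{k_r}\}$. Since $\theta_\ell$ is the smallest angle, the only monomial whose convex hull meets $\theta_\ell$ is $N_\ell$ itself, forcing $(g_\ell)_{cc'}=g_{cc'}$ for every $(c,c')\in \mathrm{R}(\theta_\ell)$. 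This uniquely determines $g_\ell$ from $g$; peeling it off and iterating then yields uniqueness of all factors.

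For existence, define $g_\ell\in \Sf(\theta_\ell)$ by $(g_\ell)_{cc'}:=g_{cc'}$ for $(c,c')\in \mathrm{R}(\theta_\ell)$ and set $h:=g_\ell^{-1}\circ g$. The crucial step is to verify that $h_{cc''}=0$ for every $(c,c'')\in \mathrm{R}(\theta_\ell)$. Splitting $g=g_\ell+(g-g_\ell)$ and using $g_\ell^{-1}\circ g_\ell=\id$, this reduces to the vanishing of the $\mathrm{R}(\theta_\ell)$-components of $g_\ell^{-1}\circ(g-g_\ell)$: any intermediate index $c'$ contributing non-trivially satisfies $(c',c'')\in \mathrm{R}(\theta_\ell)$ and $(c,c')\in \mathrm{R}(\theta_k)$ for some $k$, and convexity forces $k=\ell$, at which point $(g-g_\ell)_{cc'}=0$ by construction. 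Hence $h\in \Sm_T$ for $T=\{\theta_1,\ldots,\theta_{\ell-1}\}$, and the induction hypothesis applied to the convex subset $T$ produces the factorization $h=g_{\ell-1}\circ\cdots\circ g_1$. The main subtlety is this cancellation in the presence of chains $(c,c')\to(c',c'')$ entirely within $\mathrm{R}(\theta_\ell)$, where the convexity bound degenerates to equality; but the higher-order terms of $g_\ell^{-1}$ absorb these contributions precisely via $g_\ell^{-1}\circ g_\ell=\id$, so no additional machinery is needed.
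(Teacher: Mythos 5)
Your proof is correct, but it takes a genuinely different route from the paper's. The paper argues component-wise: writing $g_{ij}$ for the $(c_i,c_j)$-component of the factor $g_{k(i,j)}$, where $(c_i,c_j)\in\mathrm{R}(\theta_{k(i,j)})$, it expands the $(c_i,c_j)$-component of $g_\ell\circ\cdots\circ g_1$ as $g_{ij}$ plus a sum over chains $i=i_0<i_1<\cdots<i_a=j$ (with monotone angles along the chain) of products of components $g_{i'j'}$ with $j'-i'<j-i$; this triangularity lets one solve uniquely for all $g_{ij}$ by induction on $j-i$, giving existence and uniqueness in one stroke. You instead induct on the number of directions $\theta_i$, peeling off the extremal factor: convexity of directions shows that among the monomials $N_{k_1}\circ\cdots\circ N_{k_r}$ only $N_\ell$ can contribute to $\mathrm{R}(\theta_\ell)$-components, which pins down $g_\ell$ as the restriction of $g$ to $\mathrm{R}(\theta_\ell)$, and the same convexity shows $g_\ell^{-1}\circ g$ lands in the subgroup attached to $\{\theta_1,\dots,\theta_{\ell-1}\}$, so the induction (your "convex subset" formulation, though initial segments are all you actually use) closes. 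Your argument is essentially the decomposition of a unipotent group into an ordered product of its root subgroups and makes the geometric mechanism explicit, while the paper's recursion is shorter and produces an explicit componentwise formula; both rest on the same fact that positive combinations of the $e^{\mathrm{i}\theta_{k}}$ stay inside the angular sector they span. One wording caveat: your convexity principle and the sentence about "the only monomial whose convex hull meets $\theta_\ell$" need the strict (open) form --- for maps with identity diagonal blocks the composite also has components on the two boundary rays, and any monomial containing $N_\ell$ has $\theta_\ell$ in its closed hull; what you actually need, and do use correctly in the existence step, is that a composite of two or more purely off-diagonal factors with distinct directions only produces directions strictly between the extremes, hence never $\theta_\ell$.
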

          \begin{proof}
          For a pair $(i, j)$, $1\leq i<j \leq m$, let $k(i, j)$ be the number such that
          $(c_i, c_j)\in \mathrm{R}(\theta_{k(i, j)})$.
          Let $g_{i j}$ be the $(c_i, c_j)$-component of $g_{k(i, j)}$.
          Then the $(c_i, c_j)$-component of the right hand side of (\ref{gdc}) is 
          \begin{align}\label{gsum}
          \sum_{i=i_0<i_1<\cdots<i_a=j}g_{i_{a-1}i_a}\cdots g_{i_0i_1}
          \end{align}
          where $i=i_0<i_1<\cdots<i_a=j$ runs so that 
          $\theta_{k(i_0,i_1)}<\theta_{k(i_1,i_2)}<\cdots<\theta_{k(i_{a-1},i_a)}$.
          Since (\ref{gsum}) is the sum of $g_{i j}$ and products of $g_{i'j'}$ with $j'-i'<j-i$, 
          we can uniquely determine $g_{i j}$ by induction on $(j-i)$ for given $g$.   
          \end{proof}
          Set $\Delta_{\theta_i}:=(s_{\theta_i})_R\in\mathrm{Br}_m$,
          $(f_0, f^*_0):=(f_{\theta_\circ},f^*_{\theta_\circ})$, 
          and $(f_i, f_i^*)=(\Delta_{\theta_i}f_{i-1}, \Delta_{\theta_i}f^*_{i-1})$.
          We remark that $f_i^{-1}\circ f_{i-1}=
          (\Delta_{\theta_i}f_{i-1})^{-1}f_{i-1}
           \in \Sf_{\tau_{\theta_i+\varepsilon}}(s_{\theta_i})=\Sf(\theta_i)$.
           \begin{lemma}\label{braid decomp}
         $ \Delta_{\theta_\ell}\cdot\Delta_{\theta_{\ell-1}}\cdot \cdots \cdot \Delta_{\theta_1}=\Delta$.
          \end{lemma}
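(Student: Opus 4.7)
The plan is to reduce the identity in $\mathrm{Br}_m$ to two facts about the underlying permutations in $S_m$, and then invoke the well-definedness of the section $(\cdot)_R$. Concretely, I aim to prove (a) $s_{\theta_\ell} \circ s_{\theta_{\ell-1}} \circ \cdots \circ s_{\theta_1} = w_0$ in $S_m$, and (b) $\sum_{i=1}^\ell \ell(s_{\theta_i}) = \ell(w_0) = \binom{m}{2}$. Once (a) and (b) are established, concatenating any reduced expressions for the individual $s_{\theta_i}$'s produces a word in simple transpositions of total length $\binom{m}{2}$ representing $w_0$, hence a reduced expression for $w_0$. The Matsumoto-type well-definedness of $(\cdot)_R$ then yields $\Delta_{\theta_\ell} \cdots \Delta_{\theta_1} = (s_{\theta_\ell})_R \cdots (s_{\theta_1})_R = (w_0)_R = \Delta$.

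For (a), write $s_{\theta_i} = \tau_{\theta_i - \varepsilon} \circ \tau_{\theta_i + \varepsilon}^{-1}$ and use that $\tau_\theta$ is locally constant on the complement of the Stokes directions. Since $\{\theta_1,\dots,\theta_\ell\}$ exhausts the Stokes directions in $(\theta_\circ - \pi, \theta_\circ)$, one has $\tau_{\theta_i + \varepsilon} = \tau_{\theta_{i-1} - \varepsilon}$ for $2 \leq i \leq \ell$, and the composition telescopes to $\tau_{\theta_\ell - \varepsilon} \circ \tau_{\theta_1 + \varepsilon}^{-1} = \tau_{\theta_\circ - \pi} \circ \tau_{\theta_\circ}^{-1}$ (for the second equality note that $\theta_\circ - \pi$ is itself $C$-generic, since the Stokes directions are invariant under $\theta \mapsto \theta + \pi$). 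A direct computation with the definition of $<_\theta$ shows $\Re e^{-\i(\theta_\circ \pm \pi/2)} c = \pm\,\Im e^{-\i\theta_\circ} c$, so $<_{\theta_\circ - \pi/2}$ is the reverse of $<_{\theta_\circ + \pi/2}$; equivalently, $\tau_{\theta_\circ - \pi} = w_0 \circ \tau_{\theta_\circ}$, which gives (a).

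For (b), observe that the inversions of $s_{\theta_i}$ are exactly the pairs of indices whose relative $<_{\theta + \pi/2}$-order flips as $\theta$ decreases across $\theta_i$. A short calculation in the spirit of the lemma immediately preceding the statement identifies the flipping pairs with the elements of $\mathrm{R}(\theta_i)$, so $\ell(s_{\theta_i}) = |\mathrm{R}(\theta_i)|$. The disjoint-union identity
\[
\{(c,c') \mid c <_{\theta_\circ + \pi/2} c'\} = \bigsqcup_{i=1}^\ell \mathrm{R}(\theta_i)
\]
recorded just above the lemma then yields $\sum_i \ell(s_{\theta_i}) = \binom{m}{2} = \ell(w_0)$.

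The substantive step is the length additivity (b): it is what allows the elementary $S_m$-identity (a) to be lifted to $\mathrm{Br}_m$, where $(\cdot)_R$ is only a set-theoretic section rather than a group homomorphism. Given (a) and (b), the passage to the braid group is purely formal, and neither step presents a serious obstacle beyond careful bookkeeping with the orderings $<_\theta$ and the local constancy of $\tau_\theta$.
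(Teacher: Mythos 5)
Your proposal is correct and follows essentially the same route as the paper: the paper's proof is precisely the two observations $s_{\theta_\ell}\cdots s_{\theta_1}=\tau_{\theta_\circ-\pi}\circ\tau_{\theta_\circ}^{-1}=w_0$ and $\sum_{i=1}^\ell\ell(s_{\theta_i})=\sum_{i=1}^\ell \#\mathrm{R}(\theta_i)=\ell(w_0)$, followed by the well-definedness of $(\cdot)_R$. You merely spell out the details (the telescoping of the $\tau_\theta$'s across Stokes directions, the order reversal under $\theta\mapsto\theta-\pi$, and the identification of inversions with $\mathrm{R}(\theta_i)$) that the paper leaves implicit.
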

          \begin{proof}
          We have
           $s_{\theta_\ell}\cdots s_{\theta_1}
           =\tau_{\theta_\circ-\pi}\circ\tau_{\theta_\circ}^{-1}=w_0$, and 
           $\sum_{i=1}^\ell\ell(s_{\theta_i})=\sum_{i=1}^\ell \# \mathrm{R}(\theta_i)=\ell(w_0)$.
           This implies the lemma.
          \end{proof}
          \begin{lemma}
           Set $\Sf_{\theta_i}:=f^{-1}_{\theta_i-\varepsilon}\circ f_{\theta_i+\varepsilon}$. 
          Then $\Sf_{\theta_i}$ is an element of $\Sf(\theta_i)$.
          \end{lemma}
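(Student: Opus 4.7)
The plan is to unwind the definition of $f_\theta$ from Proposition~\ref{object} and identify $\Sf_{\theta_i}$ as the discrepancy between two canonical splittings on an overlap interval. Concretely, using
\[
f_\theta = (r_\L^\theta)^{-1} \circ \Gamma(I_\theta, \eta_\theta^{-1}) \circ r^\theta_{\gr\L},
\]
the composition $\Sf_{\theta_i} = f_{\theta_i-\varepsilon}^{-1}\circ f_{\theta_i+\varepsilon}$ factors through restrictions to $J := I_{\theta_i+\varepsilon}\cap I_{\theta_i-\varepsilon}$ and is identified, via the restriction isomorphisms $r^{\theta_i\pm\varepsilon}_{\gr\L}$, with the automorphism
\[
\phi := \eta_{\theta_i-\varepsilon}\circ \eta_{\theta_i+\varepsilon}^{-1} \ :\ \gr\L|_{[J]}\simeqto \gr\L|_{[J]}.
\]
So the task reduces to showing that $\phi$, read as a matrix of maps $\phi_{c,c'}:\gr_c\L|_{[J]}\to \gr_{c'}\L|_{[J]}$, satisfies $\phi_{c,c}=\id$ and $\phi_{c,c'}=0$ for $(c,c')\notin \mathrm{R}(\theta_i)$ with $c\neq c'$.

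For the diagonal identity, both $\eta_{\theta_i\pm\varepsilon}$ are the splittings of Proposition~\ref{split}(1): each preserves the Stokes filtration and induces the identity on $\gr$. Hence $\phi$ preserves the pointwise filtration on $\gr\L|_{[J]}$ and induces identity on graded pieces, which forces $\phi_{c,c}=\id$. For the off-diagonal vanishing, I choose the parameter $\varepsilon$ in the definition of $I_\theta$ slightly smaller than the shift $\varepsilon$ at $\theta_i\pm\varepsilon$, so that $J$ is an open interval centered at $\theta_i$ strictly contained in $\,]\theta_i-\pi/2,\theta_i+\pi/2[$. A component $\phi_{c,c'}$ is a section of the locally constant sheaf $\mathscr{H}\!om(\gr_c\L,\gr_{c'}\L)$ over the connected interval $J$, so it either vanishes or is everywhere non-zero; in the latter case, filtration preservation forces $c'<_\theta c$ at \emph{every} $\theta\in J$. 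Since $\{\theta:c'<_\theta c\}$ is the open half-circle of width $\pi$ centered at $\arg(c-c')$, this containment of $J$ inside a half-circle, with $J$ being a sufficiently small neighborhood of $]\theta_i-\pi/2,\theta_i+\pi/2[$ and $\varepsilon$ chosen smaller than the gap between $\theta_i$ and any other Stokes direction in $C$, forces $\arg(c-c')=\theta_i$, i.e.\ $(c,c')\in \mathrm{R}(\theta_i)$.

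The argument is mostly bookkeeping, and the only delicate point is the genericity/quantitative choice of the parameters: the defining radius of $I_\theta$ must be small enough relative to $\varepsilon$ that $J$ fits inside the open $\pi$-interval $]\theta_i-\pi/2,\theta_i+\pi/2[$, and $\varepsilon$ itself must be small enough that no Stokes direction other than $\theta_i$ lies within $\varepsilon$ of $\theta_i$. Given that $C$ is finite these choices exist, and with them the analysis of the half-circle inclusion immediately yields $\Sf_{\theta_i}\in\Sf(\theta_i)$. No braid-theoretic input is needed here, only the uniqueness of the splitting on $C$-good intervals (Proposition~\ref{split}) together with the elementary geometry of the partial order $\leqslant_\theta$.
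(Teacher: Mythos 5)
Your proof is correct and is essentially the paper's own argument, just written out in more detail: identify $\Sf_{\theta_i}$ with the discrepancy $\eta_{\theta_i-\varepsilon}\circ\eta_{\theta_i+\varepsilon}^{-1}$ of the two canonical splittings on the overlap, use pointwise filtration preservation plus the half-circle geometry of $<_\theta$ to kill the off-diagonal entries outside $\mathrm{R}(\theta_i)$, and use that splittings induce the identity on $\gr$ for the diagonal. Only cosmetic remarks: your phrase ``sufficiently small neighborhood of $]\theta_i-\pi/2,\theta_i+\pi/2[$'' should read ``slightly shrunk subinterval of,'' and note that what you call ``Stokes directions'' at the end are the directions $\arg(c-c')$ (where $\mathrm{R}(\theta)\neq\emptyset$), not the paper's Stokes directions $\theta_i\pm\pi/2$.
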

          \begin{proof}
           Since $f_{\theta_i-\varepsilon}$ and $f_{\theta_i+\varepsilon}$
           preserve the filtration on $I_{\theta_i-\varepsilon}\cap I_{\theta_i+\varepsilon}$,
           $(\Sf_{\theta_i})_{cc'}=0$ if there exists a $\varphi\in I_{\theta_i-\varepsilon}\cap I_{\theta_i+\varepsilon}$
           such that $c<_\varphi c'$.
          Therefore, if $(\Sf_{\theta_i})_{cc'}\neq0$ and $c \neq c'$, then
          $\theta_i-\varepsilon<\arg (c-c') < \theta_i+\varepsilon$ 
           which implies $(c, c') \in \mathrm{R}(\theta_i).$ 
          Since $f_{\theta_i+\varepsilon}$ and $f_{\theta_i-\varepsilon}$ are splitting, 
          $(\Sf_{\theta_i})_{cc}=\id$, which implies the lemma.
          \end{proof}
          \begin{lemma}\label{2.26}
          We have $\Delta f_{\theta_\circ}=f_{\theta_\circ-\pi}$ and $\Delta f^*_{\theta_\circ}=f^*_{\theta_\circ-\pi}$.
          \end{lemma}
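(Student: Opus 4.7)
The plan is to deduce both equalities directly from Proposition \ref{2.16} by unwinding the definitions of $f_{\theta_\circ}$, $f^*_{\theta_\circ}$, $f_{\theta_\circ - \pi}$, $f^*_{\theta_\circ - \pi}$ given in terms of the splittings $\eta_\theta$ and the restriction maps $r^\theta_\L$, $r^\theta_{\gr\L}$. Applied to the Stokes data at $\theta_\circ$, Proposition \ref{2.16} yields
\[
\Delta f_{\theta_\circ} = (f^*_{\theta_\circ})^{-1}, \qquad \Delta f^*_{\theta_\circ} = \Bigl(\bigoplus_{c\in C} T_c\Bigr) \circ f_{\theta_\circ}^{-1} \circ T^{-1}.
\]
So it suffices to verify the two identities $(f^*_{\theta_\circ})^{-1} = f_{\theta_\circ - \pi}$ and $f^*_{\theta_\circ - \pi} = (\bigoplus_{c} T_c) \circ f_{\theta_\circ}^{-1} \circ T^{-1}$.

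The first identity is immediate: the defining composition
\[
f^*_{\theta_\circ} = (r_{\gr\L}^{\theta_\circ - \pi})^{-1} \circ \Gamma\bigl(I_{\theta_\circ - \pi}, \eta_{\theta_\circ - \pi}\bigr) \circ r^{\theta_\circ - \pi}_\L
\]
inverts termwise to the defining composition of $f_{\theta_\circ - \pi}$.

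For the second identity I would route through the auxiliary map $f^!$. Taking its explicit expression at angle $\theta_\circ - \pi$,
\[
f^!_{\theta_\circ - \pi} = (r_{\gr\L}^{\theta_\circ})^{-1} \circ \Gamma\bigl(I_{\theta_\circ}, \eta_{\theta_\circ}\bigr) \circ r^{\theta_\circ}_\L,
\]
one sees at a glance that this equals $f_{\theta_\circ}^{-1}$. Combining with the algebraic relation $f^!_{\theta_\circ - \pi} = (\bigoplus_c T_c^{-1}) \circ f^*_{\theta_\circ - \pi} \circ T$ from Definition \ref{stokes data} and solving for $f^*_{\theta_\circ - \pi}$ delivers the claimed formula.

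There is essentially no obstacle beyond organised bookkeeping. The one implicit compatibility worth noting is that $w_0 \circ \tau_{\theta_\circ} = \tau_{\theta_\circ - \pi}$, so that both sides of each claimed identity do live in the same category $\std_{\bm k}(C, \tau_{\theta_\circ - \pi})$; this follows because $\Re e^{-\i(\theta+\pi)} = -\Re e^{-\i\theta}$, which interchanges the orderings $<_{\theta_\circ + \pi/2}$ and $<_{\theta_\circ - \pi/2}$ on $C$.
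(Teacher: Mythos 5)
Your proof is correct and follows essentially the same route as the paper: both start from Proposition \ref{2.16} and then identify $(f^*_{\theta_\circ})^{-1}=f_{\theta_\circ-\pi}$ and $(\bigoplus_c T_c)\circ f_{\theta_\circ}^{-1}\circ T^{-1}=f^*_{\theta_\circ-\pi}$ from the explicit descriptions of $f_\theta$, $f^*_\theta$, $f^!_\theta$ in terms of the splittings $\eta_\theta$. The only cosmetic difference is that the paper obtains the second identity by passing through $f_{\theta_\circ-2\pi}^{-1}$, whereas you pass through $f^!_{\theta_\circ-\pi}$; both encode the same monodromy equivariance.
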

          \begin{proof}
           By definition, we have $(f^*_{\theta_\circ})^{-1}=f_{\theta_\circ-\pi}$. 
           Using Proposition \ref{2.16}, we have the following: 
           \begin{align*}
           (\Delta f_{\theta_\circ} , \Delta f_{\theta_\circ}^*)
           &=\left((f_{\theta_\circ}^*)^{-1}, \left(\bigoplus_c T_c\right)\circ f^{-1}_{\theta_\circ}\circ T^{-1}\right)  \\
           &=(f_{\theta_\circ-\pi},f_{\theta_\circ-2\pi}^{-1}) \\
           &=(f_{\theta_\circ-\pi},f^*_{\theta_\circ-\pi}).
           \end{align*} 
          \end{proof}
          \begin{lemma}
           We have $f_i=f_{\theta_i-\varepsilon}$ and $f_i^*=f^*_{\theta_i-\varepsilon}$.
          \end{lemma}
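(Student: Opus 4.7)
The plan is induction on $i$, with $i=1$ as the base case and $f_0=f_{\theta_\circ}$ given by definition. Assume the formulas hold at $i-1$ (for $i=1$ this just means $f_0=f_{\theta_\circ}$ and $f_0^*=f^*_{\theta_\circ}$). First I would reduce the desired identities at index $i$ to a purely local statement at $\theta_i$. Since $\{\theta_1,\dots,\theta_\ell\}$ exhausts the Stokes directions in $(\theta_\circ-\pi,\theta_\circ)$, the open interval $(\theta_i,\theta_{i-1})$ (with the convention $\theta_0:=\theta_\circ$) contains no Stokes direction, so $I_{\theta_{i-1}-\varepsilon}\cup I_{\theta_i+\varepsilon}$ lies inside a single $C$-good interval. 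Proposition \ref{split}(1), the uniqueness of splittings on $C$-good intervals, then yields $\eta_{[I_{\theta_{i-1}-\varepsilon}]}=\eta_{[I_{\theta_i+\varepsilon}]}$ on the common $C$-good extension, hence $f_{\theta_{i-1}-\varepsilon}=f_{\theta_i+\varepsilon}$. Repeating the argument in the $I_{\theta-\pi}$-window (which by the same token contains no Stokes direction) gives $f^*_{\theta_{i-1}-\varepsilon}=f^*_{\theta_i+\varepsilon}$. Combining with the induction hypothesis, the lemma reduces to
\[ \Delta_{\theta_i} f_{\theta_i+\varepsilon}=f_{\theta_i-\varepsilon}, \qquad \Delta_{\theta_i} f^*_{\theta_i+\varepsilon}=f^*_{\theta_i-\varepsilon}. \]

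For the first identity I would unpack $\Delta_{\theta_i}=(s_{\theta_i})_R$ via Lemma \ref{braidact}; together with the identification $I_{\tau_{\theta_i+\varepsilon}(c)}(s_{\theta_i})=\tau_{\theta_i+\varepsilon}(\mathrm{R}_c(\theta_i))$ this gives
\[ (\Delta_{\theta_i} f_{\theta_i+\varepsilon})_c = \Big(\prod_{c'\in\mathrm{R}_c(\theta_i)}^{\leftarrow}R_{c'}\Big)\circ f_{\theta_i+\varepsilon,c}, \qquad R_{c'}=\id_V-f_{\theta_i+\varepsilon,c'}\circ f^*_{\theta_i+\varepsilon,c'}, \]
together with an analogous formula for $(\Delta_{\theta_i}f^*_{\theta_i+\varepsilon})_c$. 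I would then show that this mutated isomorphism $\bigoplus_c V_c\simeqto V$ lifts to a sheaf splitting of $\L|_{[I_{\theta_i-\varepsilon}]}$ that is compatible with the Stokes filtration at $\theta_i-\varepsilon$: each factor $R_{c'}$ subtracts the $V_{c'}$-component of $f_{\theta_i+\varepsilon,c}$, which is precisely what is needed to restore the filtration condition after the ordering $\leqslant_{\theta+\pi/2}$ flips on the pairs $(c,c')\in\mathrm{R}(\theta_i)$ as $\theta$ moves from $\theta_i+\varepsilon$ to $\theta_i-\varepsilon$. By the uniqueness of splittings on the $C$-good interval $[I_{\theta_i-\varepsilon}]$, this lifted splitting must coincide with $\eta_{[I_{\theta_i-\varepsilon}]}$, establishing the first identity. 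The second is obtained by a parallel argument at the paired Stokes direction $\theta_i-\pi$, using the formula for $\R_i f^*$.

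The main obstacle is precisely the sheaf-theoretic verification in the previous paragraph: matching the algebraic operator $\prod_{c'}R_{c'}$ with the unique change of splitting across $\theta_i$. The preceding lemma already places both $f_{\theta_i+\varepsilon}^{-1}\circ(\Delta_{\theta_i}f_{\theta_i+\varepsilon})$ and the Stokes factor $\mathrm{Sf}_{\theta_i}=f_{\theta_i-\varepsilon}^{-1}\circ f_{\theta_i+\varepsilon}$ in $\mathrm{Sf}(\theta_i)$, but showing they coincide requires careful componentwise bookkeeping tracking how each $R_{c'}$ projects $f_{\theta_i+\varepsilon,c}$ into the correct piece $\L_{\leqslant c}$ of the Stokes filtration on $[I_{\theta_i-\varepsilon}]$; once this compatibility is checked, Proposition \ref{split}(1) closes the argument.
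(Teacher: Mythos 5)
There is a genuine gap. Your induction correctly reduces the lemma to the single-crossing identities $\Delta_{\theta_i}f_{\theta_i+\varepsilon}=f_{\theta_i-\varepsilon}$ and $\Delta_{\theta_i}f^*_{\theta_i+\varepsilon}=f^*_{\theta_i-\varepsilon}$, but these are essentially the lemma itself, and the step you offer for them is not a proof. To invoke the uniqueness of splittings on the good interval $[I_{\theta_i-\varepsilon}]$ (Proposition \ref{split}) you must first show that the map $\Delta_{\theta_i}f_{\theta_i+\varepsilon}$, viewed on $I_{\theta_i-\varepsilon}$, really is a splitting compatible with the Stokes filtration with identity graded part; this is exactly the content you defer to ``careful componentwise bookkeeping''. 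As you yourself observe, what you actually have in hand --- that both $\Sf_{\theta_i}=f_{\theta_i-\varepsilon}^{-1}\circ f_{\theta_i+\varepsilon}$ and $(\Delta_{\theta_i}f_{\theta_i+\varepsilon})^{-1}\circ f_{\theta_i+\varepsilon}$ lie in the group $\Sf(\theta_i)$ --- only says that the two candidate splittings at $\theta_i-\varepsilon$ differ by an element of $\Sf(\theta_i)$; it does not force them to coincide. Carrying out the missing verification would require analyzing how the images $\mathrm{Im}\, f_{\theta_i+\varepsilon,c}$ sit inside the filtration on the enlarged interval $I_{\theta_i+\varepsilon}\cup I_{\theta_i-\varepsilon}$, i.e.\ redoing a local classification across the crossing, and nothing in your sketch supplies this. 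The same criticism applies, with interest, to the one-line ``parallel argument at $\theta_i-\pi$'' for the $f^*$ statement.

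The paper closes this gap by a global uniqueness argument that you never invoke: the factorization lemma quoted from \cite[Lemma 2]{bjr}, asserting that every $g\in\Sm(\theta_\circ)$ factors \emph{uniquely} as $g_\ell\circ\cdots\circ g_1$ with $g_i\in\Sf(\theta_i)$. One computes the telescoping product $\Sf_{\theta_\ell}\circ\cdots\circ\Sf_{\theta_1}=f_{\theta_\circ-\pi}^{-1}\circ f_{\theta_\circ}$, identifies it with $\widetilde{\Delta f_{\theta_\circ}}$ via Lemma \ref{2.26}, and expands $\Delta=\Delta_{\theta_\ell}\cdots\Delta_{\theta_1}$ (Lemma \ref{braid decomp}) through the cocycle property of $\sigma\mapsto\widetilde{\sigma f}:=(\sigma f)^{-1}\circ f$, so that the product of the algebraic factors $\widetilde{\Delta_{\theta_i}f_{i-1}}=f_i^{-1}\circ f_{i-1}$, each of which lies in $\Sf(\theta_i)$, equals the product of the geometric factors $\Sf_{\theta_i}$; uniqueness then gives factorwise equality $f_i^{-1}\circ f_{i-1}=\Sf_{\theta_i}$, hence $f_i=f_{\theta_i-\varepsilon}$, with no pointwise filtration check at all. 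For $f_i^*$ the paper does not argue ``in parallel'' either: it reruns the construction starting at $\theta_\circ-\pi$, applies the first half, and combines Lemma \ref{2.26} with Proposition \ref{2.16} to conclude $f_i^*=f^*_{\theta_i-\varepsilon}$. Either adopt that route or actually perform the filtration-compatibility verification; as written, your proposal does not prove the lemma.
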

          \begin{proof}For $\sigma \in \mathrm{Br}_m$,  set $\widetilde{\sigma f}:=(\sigma f)^{-1}\circ f$.
          Then we have
           \begin{align*} 
            \Sf_{\theta_\ell}\cdots\Sf_{\theta_1}
             &=f_{\theta_\circ-\pi}^{-1}\circ f_{\theta_\circ} \\
             &=\widetilde{\Delta f_{\theta_\circ}} \\
             &=\widetilde{\Delta_{\theta_\ell}\cdots\Delta_{\theta_1}f_0} \\
             &=\widetilde{\Delta_{\theta_\ell} f_{\ell-1}}\circ \cdots \circ \widetilde{\Delta_{\theta_1}f_0}.
           \end{align*}
           This implies $\Sf_{\theta_i}=\widetilde{\Delta_{\theta_i}f_{i-1}}$. 
           Hence we have $f_i=f_{\theta_i-\varepsilon}$.
           Set $\theta_\circ':=\theta_\circ -\pi$, $\theta_i':=\theta_i-\pi$, 
           $(f_0',f_0'^*):=(f_{\theta_\circ'},f_{\theta_\circ'}^*)$, 
           and $(f_i',f_i'^*):=(\Delta_{\theta_i'}f_{i-1}',\Delta_{\theta_i'}f_{i-1}'^*)$. 
           By the first part of this lemma, we have $f_i'=f_{\theta'_i-\varepsilon}$.
           By Lemma \ref{2.26}, we see that
           \[(\Delta_{\theta_\ell} \cdots \Delta_{\theta_{i+1}}f_i,
           \Delta_{\theta_\ell} \cdots \Delta_{\theta_{i+1}}f_i^*)=(f_{\theta_\circ'}, f_{\theta_\circ'}^*),\]
           which implies $f_i'=\Delta_{\theta_i'}\cdots \Delta_{\theta_1'}\cdot\Delta_{\theta_\ell}\cdots            
           \Delta_{\theta_{i+1}}f_i=\Delta f_i.$
           Combined with Proposition \ref{2.16}, we have $f_{\theta'_i-\varepsilon}=(f_i^*)^{-1},$
           which proves the lemma.            
          \end{proof}
          \begin{corollary}\label{factorcomputation}
          If $\theta, \theta'\in\R$ are $C$-generic and $\theta \ge \theta' \ge \theta-\pi$, 
          then
          $$\hspace{1.0in}
          f_{\theta',c}
          =\left(\prod^{\leftarrow}_{i\in I_{\tau_\theta(c)}(s_{\theta,\theta'})}R_i\right)
          \circ f_{\theta, c},\ \ 
          f^*_{\theta',c}=f^*_{\theta,c}\circ \prod_{i\in I_{\tau_\theta(c)}(s_{\theta,\theta'})}^\rightarrow                    
          R^*_i.
          \hspace{1.2in}$$ 
          \end{corollary}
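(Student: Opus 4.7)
The plan is to extract this corollary directly from Theorem \ref{mutation gives stokesfactor} combined with Lemma \ref{braidact}; both are already in place at this point of the paper, so the corollary is essentially a bookkeeping statement.

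First, I would invoke Theorem \ref{mutation gives stokesfactor} in precisely the regime assumed here, namely $\theta \ge \theta' \ge \theta - \pi$. The second half of that theorem supplies the specific braid element $\sigma := (s_{\theta,\theta'})_R = (\tau_{\theta'}\circ\tau_\theta^{-1})_R \in \mathrm{Br}_m$ such that $\mathbb{M}_\sigma\circ\mathfrak{A}_\theta \simeq \mathfrak{A}_{\theta'}$ as functors $\st^C_{\bm k}\to \std_{\bm k}(C,\tau_{\theta'})$. Applying this natural isomorphism to the fixed object $(\L,\L_\bullet)$ and unpacking Definition \ref{braid action}, I obtain the identifications $f_{\theta'} = \sigma f_{\theta}$ and $f^*_{\theta'} = \sigma f^*_{\theta}$ at the level of the underlying data, where the notation $\sigma f$, $\sigma f^*$ is the one introduced just before Lemma \ref{braidact}.

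Second, I would apply Lemma \ref{braidact} with $s := s_{\theta,\theta'} \in S_m$ and $\tau := \tau_\theta$, to the Stokes data $\mathfrak{A}_\theta(\L,\L_\bullet) = ((V,T),(V_c,T_c)_{c\in C}, f_\theta, f^*_\theta)$. The formulas in the lemma read $((s)_R f_\theta)_c = \bigl(\prod^{\leftarrow}_{i\in I_{\tau_\theta(c)}(s)} R_i\bigr)\circ f_{\theta,c}$ and $((s)_R f^*_\theta)_c = f^*_{\theta,c}\circ \prod^{\rightarrow}_{i\in I_{\tau_\theta(c)}(s)} R_i^*$, where the $R_i, R_i^*$ are the endomorphisms associated to the Stokes data $(f_\theta, f^*_\theta)$. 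Substituting $f_{\theta'} = \sigma f_\theta$ and $f^*_{\theta'} = \sigma f^*_\theta$ yields exactly the two displayed identities in the corollary.

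The only point where one must be a little careful is to remember that the operators $R_i$ and $R_i^*$ are themselves built from $f_{\tau_\theta^{-1}(i)}$ and $f^*_{\tau_\theta^{-1}(i)}$, i.e.\ from the $\theta$-data and not from the $\theta'$-data; this is forced once we fix $\mathfrak{A}_\theta(\L,\L_\bullet)$ as the starting object and work inside $\std_{\bm k}(C,\tau_\theta)$ throughout. With that convention in place no further obstacle remains: the genuine content — the identification of the $\theta'$-Stokes data as an iterated mutation of the $\theta$-Stokes data along the hyperplane crossings between the two directions — has already been absorbed into Theorem \ref{mutation gives stokesfactor}, and Lemma \ref{braidact} merely translates the resulting braid word into the explicit composition of Stokes factors.
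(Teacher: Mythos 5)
Your closing step via Lemma \ref{braidact} is exactly how the paper converts the braid word $(s_{\theta,\theta'})_R$ into the displayed products of $R_i$'s, but the way you obtain the input $f_{\theta'}=\sigma f_\theta$ and $f^*_{\theta'}=\sigma f^*_\theta$ is circular with respect to the paper's logic. Theorem \ref{mutation gives stokesfactor} is only \emph{stated} at the beginning of the section; its proof appears \emph{after} Corollary \ref{factorcomputation} and, in the range $\theta\ge\theta'\ge\theta-\pi$, consists precisely of the sentence that it is a direct consequence of Corollary \ref{factorcomputation}. So in the intended order of deduction the corollary is what establishes the precise form of the theorem (in particular that one may take $\sigma=(\tau_{\theta'}\circ\tau_\theta^{-1})_R$), not the other way around; quoting the theorem here assumes the very statement to be proved.

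There is also a second, independent gap: even if you grant Theorem \ref{mutation gives stokesfactor}, its conclusion is a natural isomorphism of functors $\mathbb{M}_\sigma\circ\mathfrak{A}_{\theta}\simeq\mathfrak{A}_{\theta'}$, so on the fixed object it only yields an isomorphism in $\std_{\bm k}(C,\tau_{\theta'})$ between $(\sigma f_\theta,\sigma f^*_\theta)$ and $(f_{\theta'},f^*_{\theta'})$; such an isomorphism is some $g:(V_\L,T_\L)\to(V_\L,T_\L)$ compatible with the data and need not be the identity, so you would obtain the displayed identities only up to composition with automorphisms, whereas the corollary asserts equality of the splittings on the nose. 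The paper gets the genuine equality from the factorization machinery of the section: $\Delta f_\theta=f_{\theta-\pi}$ and $\Delta f^*_\theta=f^*_{\theta-\pi}$ (Lemma \ref{2.26}, resting on Proposition \ref{2.16}), the uniqueness of the decomposition of $f_{\theta-\pi}^{-1}\circ f_\theta\in\Sm(\theta)$ into Stokes factors lying in the groups $\Sf(\theta_i)$, and the resulting identification $f_i=f_{\theta_i-\varepsilon}$, $f_i^*=f^*_{\theta_i-\varepsilon}$; only after that does Lemma \ref{braidact} translate the braid word into the products of $R_i$ and $R_i^*$. To repair your argument you would need to prove that intermediate lemma (or an equivalent on-the-nose statement) rather than cite Theorem \ref{mutation gives stokesfactor}.
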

          \begin{proof}[Proof of Theorem $\ref{mutation gives stokesfactor}$]
          In the case $\theta \ge \theta' \ge \theta-\pi$,  Theorem $\ref{mutation gives stokesfactor}$ 
          is a direct consequence of Corollary \ref{factorcomputation}. 
          The general case can be reduced to this case.
          \end{proof}
          \subsection{Pairings on Stokes filtered local systems and Mutation systems}
                   For $(V,T)\in {\sf Rep}_{\Z}(\bm{k})$, a pairing 
         $[\cdot, \cdot \rangle: V\otimes V\to \bm{k}$ is called compatible with $T$
         if
         \begin{align}\label{compatiblepairing} 
         [v, w \rangle=[Tw, v\rangle \text{ for all } v, w\in V.
         \end{align}
         If the map $[\cdot, \cdot \rangle$ is non-degenerate, 
         then the monodromy $T$ is determined by 
         the compatibility condition.
         Here, non-degenerate means that 
         the induced map $v\mapsto(w\mapsto [v, w\rangle)$
         is an isomorphism. The condition is equivalent to that 
         the map $v\mapsto(w\mapsto [w, v\rangle)$
         is an isomorphism.   
         A pair $(V , [\cdot,\cdot\rangle)$ of a vector space $V$ and 
         a non-degenerate pairing $ [\cdot,\cdot\rangle$ is called polarized vector space.      
         We often assume that the pairing is non-degenerate without a mention.
         For two polarized vector spaces $\pol$ and $\poll$, 
         a linear map $f:V\to V'$ is called a morphism of  polarized vector spaces if
         it is compatible with the pairings:  $[v, w\rangle=[f v, f w\rangle'$.
         \begin{definition}
         Let $\big{(}(V,T),(V_\idx,T_\idx)_{\idx\in\is}, f, f^*\big{)}$ be an object of $\std(C,\tau)$.
         A pairing $[\cdot ,\cdot \rangle$ on $V$ compatible with $T$ is called
         compatible with the Stokes data 
         if the following conditions hold:
         \begin{itemize}
          \item The induced map 
                   $[v, w\rangle_\idx:=[f_\idx v, f_\idx w\rangle\ (v, w\in V_\idx)$ 
                   is non-degenerate on 
                   $V_\idx$  and compatible with $T_\idx$ for all $\idx \in \is$.
          \item For every $\idx \in \is$, the map
                   $f^*_\idx:V\to V_\idx$ is left adjoint to $f_\idx$ in  the sense that 
                   $[v, f_\idx v_\idx \rangle=[f^*_\idx v, v_\idx \rangle_\idx$ for all $ v\in V, v_\idx \in V_\idx$. 
         \end{itemize}         
         \end{definition}
         A representation $(V,T)$ equipped with Stokes data and 
         a compatible pairing is equivalent to the following structure,
         which we call ^^ ^^ mutation systems".
         \begin{definition}\label{def of mutation system}
          A mutation system is a tuple 
          $\big{(}\pol,\polp,\tau, f\big{)}$
          consisting of 
          \begin{enumerate}
           \item[$1.$] a polarized vector space $\pol$,
           \item[$2.$] a family of polarized vector spaces $\polp$ indexed by a finite set $\is$,
           \item[$3.$] a bijection  $\tau: \is\simeqto \{1,2,\dots, m\}$ 
           $($the pair $(C,\tau)$ is called type of the mutation system$)$, and
           \item[$4.$] an isomorphism $f:\bigoplus_{c\in\is}V_c \simeqto V$ of vector spaces
          \end{enumerate}
          such that $(\tau, f)$ gives a semiorthogonal decomposition of $\pol$ 
          with respect to $\polp$ in the sense that
          \begin{enumerate}
               \item[$(a)$] for every $\idx\in\is$, the restriction $f_\idx:=f|_{V_\idx}$ 
                            is a morphism of polarized vector spaces, and
               \item[$(b)$] if $v\in V_\idx$, $w\in V_{\idxx}$ and $\tau(\idx)>\tau(\idxx)$, 
                            then $[f_c v, f_{\idxx} w\rangle=0$.
          \end{enumerate}
          We call the underlying pair $(\tau, f)$ the splitting data of the mutation system. 
          The category of mutation systems  with fixed type $(\is,\tau)$
          $($whose morphisms  are the morphisms of underlying Stokes data 
          compatible with the pairings$)$ is denoted by ${\sf Mut}_{\bm k}(\is,\tau)$.               
         \end{definition} 
         We remark that we can reconstruct the maps $f^*_c$ (resp. $f^!_c$) for $c\in C$ by the condition 
         $[v, f_c w\rangle = [f^*_c v, w\rangle_c$ 
         (resp. $[f_c w, v\rangle=[w, f^!_c v\rangle_c$) for all $v\in V$, $w\in V_c$.
         We also have $[v, R_i w \rangle=[R^*_i v, w\rangle$, and $[L_i v, w\rangle=[v, L^!_i w\rangle$. 
         The functor $\mathbb{M}_\sigma:{\sf Std}_{\bm k}(\is,\tau)\simeqto {\sf Std}_{\bm k}(\is,\bar{\sigma}\circ\tau)$
         defined in Definition \ref{braid action}
         for $\sigma\in \mathrm{Br}_m$  
         can be extended to the functor 
         $\mathbb{M}_\sigma:{\sf Mut}_{\bm k}(C,\tau)\simeqto{\sf Mut}_{\bm k}(C,\bar{\sigma}\circ \tau)$.  
          
          Let $\L$ be a local system on $S^1$.
        A sesquilinear pairing on $\L$ is
        a morphism $h:\iota^{-1}\L\otimes\L\to {\bm{k}}_{S^1}$
        of local systems.
        It induces two morphisms  
        $\ell_h, \D\ell_h:\L\to\D\L$. 
        Here, $\ell_h$ is defined by $\ell_h t(s):=h(s, t)$ where $t\in\L, s\in \iota^{-1}\L$, 
        and $\D\ell_h$ is its dual.
        It is called non-degenerate if $\ell_h$ is an isomorphism. 
        It is called symmetric
        if $\iota^{-1} h\circ\mathrm{ex}= h$
        where 
        $\text{ex}: \iota^{-1}\L\otimes\L\simeqto\L\otimes\iota^{-1}\L$
        is the exchanging operator.
        This is equivalent to the condition $\D\ell_h=\ell_h$.
        
        \begin{lemma}\label{lem pp}
         Let $\L$ be a local system on $S^1$ and $(V,T)=(V_\L,T_\L)$ be the corresponding object in ${\sf Rep}(S^1)$.
         Then there is a natural one-to-one correspondence 
         between the set of non-degenerate symmetric sesquilinear pairings on $\L$
         and the set of pairings on $V$ compatible with $T$.         
        \end{lemma}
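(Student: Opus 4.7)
The plan is to pull $h$ back to the universal cover $p\colon\R\to S^1$. The lift of $\iota$ is $\tilde\iota\colon\theta\mapsto\theta+\pi$, and since $\R$ is simply connected, global sections of $p^{-1}\L$ and of $\tilde\iota^{-1}p^{-1}\L$ are both canonically identified with $V=V_\L$. Taking global sections of the pullback $\tilde h\colon\tilde\iota^{-1}p^{-1}\L\otimes p^{-1}\L\to{\bm k}_\R$ produces a bilinear form $[\cdot,\cdot\rangle\colon V\otimes V\to{\bm k}$, well defined because $\R$ is connected and ${\bm k}_\R$ is constant. Deck-transformation equivariance of $\tilde h$ already gives the weaker invariance $[Tv,Tw\rangle=[v,w\rangle$.

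The main step is to match the symmetric condition $\iota^{-1}h\circ\mathrm{ex}=h$ with the stronger $T$-compatibility $[v,w\rangle=[Tw,v\rangle$. I would compare the two natural identifications $V\simeqto V_{\iota^{-1}\L}$, namely $u\mapsto u(\cdot+\pi)$ and $u\mapsto u(\cdot-\pi)$: both are $T$-equivariant but they differ by $T$; equivalently, either of them, composed with itself through $V\to V_{\iota^{-1}\L}\to V_{\iota^{-1}\iota^{-1}\L}=V$, squares to $T$ rather than to the identity. Unpacking the symmetric condition at the stalk level as $h_\theta(s,t)=h_{\theta+\pi}(t,s)$ and using the flatness of $h$ to relate $h_0$ and $h_\pi$ (the transport on the $\iota^{-1}\L$-slot along the path $0\to\pi$ takes $\L_\pi$ to $\L_{2\pi}=\L_0$ and contributes a factor of $T$) produces exactly the shift by $T$ needed for the compatibility.

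Conversely, given a $T$-compatible $[\cdot,\cdot\rangle$, I would define $h_\theta\colon\L_{\theta+\pi}\otimes\L_\theta\to{\bm k}$ by parallel-transporting both slots back to $V$ and applying $[\cdot,\cdot\rangle$; $T$-compatibility is precisely what makes the definition independent of the chosen homotopy class of arc (so that $h$ glues to a morphism of local systems on $S^1$) and what forces the symmetric condition. The two constructions are mutual inverses by direct inspection, and non-degeneracy matches on both sides since $\ell_h\colon\L\to\D\L$ is an isomorphism of local systems on $S^1$ if and only if its induced map on global sections over $\R$, which under the above identifications coincides with the map $V\to V^\vee$ coming from $[\cdot,\cdot\rangle$, is an isomorphism. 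The main obstacle is the careful bookkeeping of the two homotopy classes of arcs from $\theta$ to $\theta+\pi$ on $S^1$, which differ by one loop and hence by $T$; this is the mechanism that converts the involution $\iota$ on the sheaf side into the monodromy $T$ on the representation side.
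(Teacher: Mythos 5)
Your proposal is correct and follows essentially the same route as the paper: the paper's proof defines $[s,t\rangle_h:=p^{-1}h(\tau_\L s,t)$ with $\tau_\L s(\theta)=s(\theta-\pi)$ on global sections over the universal cover, and asserts that symmetry of $h$ corresponds to $T$-compatibility and that non-degeneracy transfers, which is exactly the mechanism you spell out (your shift-by-$\pi$ identification differing from the paper's only by a factor of $T$). Your write-up just makes explicit the stalk-level bookkeeping and the converse construction that the paper leaves to the reader.
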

        \begin{proof}
        Let $h$ be a non-degenerate symmetric sesquilinear pairing on $\L$.
        Then the pairing $[\cdot,\cdot\rangle_h$ on $V$ is given by the formula
        \begin{align}\label{p p}
         [s, t\rangle_h:=p^{-1} h(\tau_{\L}s, t) \ \ \ \ \ \ \ & (s, t \in V_{\L}=\Gamma(\R, p^{-1}\L)). 
        \end{align}
        Here, $\tau_{\L}: V_\L\simeqto V_{\iota^{-1}\L}$ is 
        given by $\tau_{\L}s(\theta)=s(\theta-\pi)$ for $\theta\in\R$.
        Symmetry of $h$ implies compatibility of $[\cdot,\cdot\rangle_h$ with $T$
        and non-degeneracy of $h$ implies that of $[\cdot,\cdot\rangle_h$.
        \end{proof}
         \begin{definition}
           Let $(\L,\L_{\bullet})$ be a Stokes filtered local system.
           We call a sesquilinear pairing 
           $h:\iota^{-1}\L\otimes \L\to {\bm k}_{S^1}$ 
           compatible with the Stokes filtration $\L_\bullet$ 
           if the induced morphism $\ell_h$ is a morphism of Stokes filtered local systems.                    
           The category of Stokes filtered local systems with exponents $C$ 
           with symmetric non-degenerate sesquilinear pairings compatible with the Stokes filtration 
           is denoted by ${\stp}^C_{\bm k}$. 
           The morphism in this category is the morphism in $\st_{\bm{k}}^C$ such that the pairing is preserved.          
           \end{definition}
          Remark that $h$ is compatible with the Stokes filtration if and only if $\ell_h:\L\to\D\L$
          is a morphism of Stokes filtered local systems. 
          \begin{lemma}
          We have a functorial isomorphism of Stokes data
          \begin{align*}
            \Phi_{\theta_\circ}(\D\L,\D\L_\bullet)\simeq 
            \left((V^\vee,(T^\vee)^{-1}),(V_c^\vee,(T_c^\vee)^{-1})_{c\in C},
            (f^*_{\theta_\circ})^\vee, \left(\bigoplus_{c\in C}T_c^\vee\right)^{-1} f^\vee_{\theta_\circ}T^\vee\right).
           \end{align*}
         Via this isomorphism, 
         $\Phi_{\theta_\circ}(\ell_h)$ is identified with the map $V\simeqto V^\vee; v\mapsto [\bullet, v\rangle_h$.
              \end{lemma}
               \begin{proof}
               The morphism $\tau_{\L}$ (in the proof of Lemma \ref{lem pp}) gives an 
               isomorphism
               $\tau^\vee_\L:V_{\D\L}\simeqto V^\vee_\L$.
               The morphism $\Phi_{\theta_\circ}(\ell_h):V_\L\to V_{\D\L}$ is by definition 
               identified with $V\simeqto V^\vee; v\mapsto [\bullet, v\rangle_h$.
               We also have $\tau^\vee_{\gr_c\L}:V_{\gr_c\D\L}\simeqto V_c^\vee$.
               Via these isomorphisms, the pair $((f^\vee_{\theta_\circ-\pi})^{-1},(f^{* \vee}_{\theta_\circ-\pi})^{-1})$
               underlies the Stokes data $\Phi_{\theta_\circ}(\D\L,\D\L_\bullet).$
               Since $f_{\theta_\circ-\pi}=(f^*_{\theta_\circ})^{-1}$, and 
               $f^*_{\theta_\circ-\pi}=T^{-1}f_{\theta_\circ}^{-1} (\bigoplus_c T_c)$, 
               we have the conclusion.
               \end{proof}
          \begin{lemma} 
           Let $\big{(}(V,T),(V_c,T_c)_{c\in C}, f, f^*\big{)}$ be an object of 
           $\std(C,\tau)$.
           Then the tuple 
           \begin{align}\label{duality for stokes data}           
           \left((V^\vee,(T^\vee)^{-1}),(V_c^\vee,(T_c^\vee)^{-1})_{c\in C},
            (f^*)^\vee, \left(\bigoplus_{c\in C}T_c^\vee\right)^{-1} f^\vee T^\vee \right)
            \end{align} 
            is also an object in $\std(C,\tau)$.
            A pairing $[\cdot,\cdot\rangle$ on $V$ compatible with $T$ is compatible with the Stokes data 
            if and only if the induced map $V\simeqto V^\vee; v\mapsto [\bullet, v\rangle$ 
            gives an isomorphism of Stokes data between $\big{(}(V,T),(V_c,T_c)_{c\in C}, f, f^*\big{)}$
            and $(\ref{duality for stokes data})$.
            \end{lemma}
            \begin{proof}
            Put $g:=(f^*)^\vee$, and $g^*:=\left(\bigoplus_c T_c^\vee\right)^{-1} f^\vee T^\vee$.
            We also put $g^!:=(\bigoplus_c T_c^\vee) g^*(T^\vee)^{-1}$.
            Then we have $g^*g=\big{(}(\bigoplus_cT_c) f^! f (\bigoplus_cT_c)^{-1}\big{)}^\vee,$
            and $g^! g=(f^*f)^\vee$. This implies the first part of the lemma.
            Let $\ell:V\to V^\vee$ be the map $v\mapsto [\bullet, v\rangle$.
            We also define $\gr_c(\ell): V_c\to V_c^\vee$ by $v_c\mapsto [f_c\bullet,f_c v_c\rangle$.
            The induced morphism $\bigoplus_cV_c\to \bigoplus_cV^\vee_c$ is denoted by $\gr(\ell)$.
            The composition 
            $g^{-1}\ell f$ is given by 
            $w\mapsto [(f^*)^{-1}\bullet, fw\rangle$.
            This map is graded if and only if it is equal to $\gr(\ell)$. 
            This condition is equivalent to the compatibility condition of the pairing with the Stokes data.
            By (\ref{compatiblepairing}),  we have $T^\vee \ell=\ell^\vee$.
            Hence the composition 
            $g^*\ell (f^*)^{-1}$ is given by $\big{(}g^{-1}\ell f (\bigoplus_cT_c^{-1})\big{)}^\vee$.
            Therefore, $g^*\ell (f^*)^{-1}$ is graded if and only if $g^{-1}\ell f$ is graded.
            This completes the proof.      
          \end{proof}
          By these lemmas, the equivalence of the categories 
          $\mathfrak{A}_{\theta_\circ}:\st^C_{\bm k}
          \simeqto \std_{\bm k}(C,\tau_{\theta_\circ})$
          gives an equivalence of categories  
          ${\stp}^C_{\bm k}\simeqto {\sf Mut}_{\bm k}(C,\tau_{\theta_\circ})$, 
          which is also denoted by $\mathfrak{A}_{\theta_\circ}$.
          We remark that the pairing $[\cdot,\cdot\rangle_c\ (c\in C)$
          underlying $\mathfrak{A}_{\theta_\circ}(\L,\L_\bullet, h)$
          is canonically identified with $[\cdot,\cdot\rangle_{\gr_c(h)}$.  
          In particular, it does not depend on the choice of $\theta_\circ$.
          \begin{remark}\label{z2grading}
           We also consider a $\Z/2\Z$-graded version of these structures. 
           If $V$ is a $\Z/2\Z$-graded vector space, the compatibility condition of 
           pairing $[\cdot,\cdot\rangle$ with $T$ is defined by 
           $$ [Tv, w\rangle=(-1)^{\deg v}[w, v\rangle,$$
           where $v, w$ are homogeneous elements.  
           The symmetry of pairings on local systems is replaced by the graded-symmetry. 
           For $\Z/2\Z$-graded local system $\L=\L^0\oplus \L^1$, $h$ is graded symmetric 
           if the following equalities hold$:$
           $h(\L^i,\L^j)=0$ for $i\neq j$, and $\iota^{-1} h\circ\mathrm{ex}= (-1)^ih$ on $\iota^{-1}\L^i\otimes\L^i$ 
           for $i=0,1$. 
           The functor  
           $\mathfrak{A}_{\theta_{\circ}}: {\stp}^C_{\bm k}
           \simeqto {\sf Mut}_{\bm k}(C,\tau_{\theta_\circ})$ enhanced to 
           these categories is also denoted by the same notation.       
          \end{remark}
  

\subsection{A generalization of the construction of Stokes data}
We generalize the construction of Stokes data from Stokes filtered local systems.
This construction is only used
in the proof of Lemma \ref{van cyc at center} and Theorem \ref{propertyd}.

Let $(\L, \L_\bullet) \in \st_{\bm{k}}^\is$
be a Stokes filtered local system with exponents $\is \subset \C$.
Fix $\is$-generic $\theta_\circ$.
A tuple of real numbers $\theta_\bullet=\{\theta_\idx\}_{\idx\in\is}$ 
is called $\is$-generic if $\theta_\idx$ are $\is$-generic for all $\idx \in \is$.
Set \[L^-_{\theta_c}:=\left\{\idx-r e^{{\tt i}\theta_\idx}\middle| 0\le r\right\} \subset \C\].

\begin{definition}
Let $\theta_\bullet$ be a $\is$-generic tuple of real numbers
with $\theta_\circ \ge\theta_\idx>\theta_\circ-2\pi$ for all $\idx \in \is$.
We define a binary relation $<_{\theta_\bullet+\pi/2}$ as follows:
\[\idx<_{\theta_\bullet+\pi/2}\!\idxx \Leftrightarrow 
\begin{cases} 
  \theta_\idx>\theta_\idxx \  \text{and} \ \ L^-_{\theta_\idx}\cap L^-_{\theta_\idxx}=
  \emptyset \ \text{or}  \\
  \theta_\idx=\theta_\idxx \  \text{and} \ \ \idx<_{\theta_\idx+\pi/2} \idxx.
\end{cases}
\]
\end{definition}

The next lemma gives another description of $<_{\theta_\bullet+\pi/2}$.

\begin{lemma}\label{ordered lemma}
\[\idx<_{\theta_\bullet+\pi/2}\!\idxx \Leftrightarrow
\begin{cases}
\idx<_{\theta_\idx+\pi/2}\!\idxx \text{or} \ \idx<_{\theta_\idxx+\pi/2}\!\idxx
&\quad (\theta_\idx \ge\theta_\idxx>\theta_\idx-\pi). \\
\idxx\!<_{\theta_\idx+\pi/2}\!\idx\, \text{or} \ \idxx\!<_{\theta_\idxx+\pi/2}\!\idx
&\quad (\theta_\idx-\pi \ge\theta_\idxx).
\end{cases}\]
\end{lemma}
\begin{proof}
Set
\[L_{\theta_c}:=\left\{\idx-re^{{\tt i}\theta_\idx}\middle| r\in \R \right\},\quad
   L^+_{\theta_\idx}:=L_{\theta_\idx}\setminus L^-_{\theta_\idx}.\]
If $\theta_\idx=\theta_\idxx$ or $\theta_\idx=\theta_\idxx \pm \pi,$
then the lemma easily follows.
Hence we consider the following two cases:
  \begin{enumerate}
   \item $\theta_\idxx \in ] \theta_\idx-\pi, \theta_\idx [ \ \pmod{2\pi}.$
   \item $\theta_\idxx \in ] \theta_\idx, \theta_\idx+\pi [ \ \pmod{2\pi}.$
  \end{enumerate}
We first consider the case (1).
In this case, we see that
$\idx<_{\theta_\idx+\pi/2}\!\idxx$
if and only if 
$L_{\theta_\idx}\cap L_{\theta_\idxx} \subset L^+_{\theta_\idxx}$
and
$\idx<_{\theta_\idxx+\pi/2}\!\idxx$
if and only if 
$L_{\theta_\idx}\cap L_{\theta_\idxx} \subset L^+_{\theta_\idx}$.
Therefore $\idx<_{\theta_\idx+\pi/2}\!\idxx$ or $\idx<_{\theta_\idxx+\pi/2}\!\idxx$ if and only if 
$L^-_{\theta_\idx}\cap L^-_{\theta_\idxx}=\emptyset$.
The lemma easily follows from this.
The proof of the case (2) is similar.
\end{proof}

In general, the binary relation $<_{\theta_\bullet+\pi/2}$ is not a partial ordering.

\begin{definition}
A tuple of real numbers $\theta_\bullet$ is said to be ordered if 
$\theta_\bullet$ is $\is$-generic,
$\theta_\circ\ge\theta_\idx>\theta_\circ-2\pi$ for all $\idx \in \is$,
and $<_{\theta_\bullet+\pi/2}$ gives a total ordering.
\end{definition}

For ordered $\theta_\bullet$,
we have the isomorphism of ordered sets $\is \simeqto \{1,2, \dots, m\},$
where the order of $\is$ is given by $<_{\theta_\bullet+\pi/2}$.
This isomorphism is denoted by $\tau_{\theta_\bullet}$. 
Set
\[f_{\theta_\bullet}:=\coprod_{\idx\in\is} f_{\theta_\idx,\idx}, \quad 
   f^*_{\theta_\bullet}:=\prod_{\idx\in\is} f^*_{\theta_\idx,\idx},\]
where the symbol $\coprod$ means the direct sum.

\begin{proposition}\label{orderedstokes}
If $\theta_\bullet$ is ordered, then
$\left((V_\idx,T_\idx)_{\idx\in\is}, f_{\theta_\bullet}, f^*_{\theta_\bullet}\right)$
are Stokes data on $(V, T)=(V_\L, T_\L)$ of type $(\is, \tau_{\theta_\bullet})$.
\end{proposition}
\begin{proof}
It is sufficient to show that $f^*_{\theta_\idx,\idx}\circ f_{\theta_\idxx, \idxx}=0$ and 
$f^!_{\theta_\idxx,\idxx}\circ f_{\theta_\idx, \idx}=0$ for $\idx<_{\theta_\bullet+\pi/2} \idxx$.
We first consider the case $\theta_\idx\ge\theta_\idxx>\theta_\idx-\pi$.
In this case, we see that
\[\theta_\idx-\pi/2, \ \theta_\idxx-\pi/2\in I_{\theta_\idx-\pi}\cap I_{\theta_\idxx}.\]
Combined with Lemma \ref{ordered lemma},
we have $f^*_{\theta_\idx, \idx}\circ f_{\theta_\idxx, \idxx}=0$
(see also the proof of Proposition \ref{object}).
By rotating $\pi$, we also see that 
\[\theta_\idx+\pi/2, \ \theta_\idxx+\pi/2\in I_{\theta_\idx}\cap I_{\theta_\idxx+\pi},\] 
which implies $f^!_{\theta_\idxx,\idxx}\circ f_{\theta_\idx, \idx}=0$.
The case $\theta_\idx-\pi\ge\theta_\idxx$ is similar.
Note that in this case we have
\[\theta_\idx-3\pi/2, \ \theta_\idxx+\pi/2\in I_{\theta_\idx-\pi}\cap I_{\theta_\idxx}. \qedhere \]
\end{proof}

Similar to Proposition \ref{object}, this correspondence gives a fully faithful functor
\[ \mathfrak{A}_{\theta_\bullet} : \st_{\bm{k}}^\is \to \std_{\bm{k}} ( \is, \tau_{\theta_\bullet}). \]
The following is a direct consequence of the above proposition. 

\begin{corollary}\label{orderedmutation} 
If $(\L, \L_\bullet, h) \in \stp_{\bm{k}}^\is$, 
then the corresponding pairing $\bil{\cdot}{\cdot}$ on $(V_\L, T_\L)$ is compatible with the Stokes data
$\mathfrak{A}_{\theta_\bullet}(\L, \L_\bullet).$ 
\end{corollary}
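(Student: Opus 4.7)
The plan is to reduce both axioms of pairing-compatibility with the Stokes data $\mathfrak{A}_{\theta_\bullet}(\L, \L_\bullet)$ to the corresponding assertions for the single-angle functors $\mathfrak{A}_{\theta_\idx}$, which were already handled in the preceding subsection.

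First I would recall what must actually be verified. The pairing $\bil{\cdot}{\cdot}$ on $V=V_\L$ obtained from $h$ via Lemma \ref{lem pp} is automatically compatible with $T=T_\L$, so what remains is, for each $\idx\in\is$: (i) the induced pairing $\bil{v}{w}_\idx:=\bil{f_{\theta_\bullet,\idx}v}{f_{\theta_\bullet,\idx}w}$ is non-degenerate on $V_\idx$ and compatible with $T_\idx$; and (ii) the adjoint relation $\bil{v}{f_{\theta_\bullet,\idx}v_\idx}=\bil{f^*_{\theta_\bullet,\idx}v}{v_\idx}_\idx$ holds for all $v\in V$ and $v_\idx\in V_\idx$.

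The key observation is that the definitions $f_{\theta_\bullet}=\coprod_\idx f_{\theta_\idx,\idx}$ and $f^*_{\theta_\bullet}=\prod_\idx f^*_{\theta_\idx,\idx}$ given before Proposition \ref{orderedstokes} force $f_{\theta_\bullet,\idx}=f_{\theta_\idx,\idx}$ and $f^*_{\theta_\bullet,\idx}=f^*_{\theta_\idx,\idx}$; in other words, the $\idx$-components coincide with those of the single-angle Stokes data $\mathfrak{A}_{\theta_\idx}(\L,\L_\bullet)$. Since $\theta_\idx$ is $\is$-generic and the previously established pairing-compatibility lemma applies to $\mathfrak{A}_{\theta_\circ}$ for any $\is$-generic $\theta_\circ$, specialising to $\theta_\circ=\theta_\idx$ immediately yields conditions (i) and (ii) for the $\idx$-component. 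Because neither condition couples distinct indices $\idx,\idxx\in\is$, the conclusion transfers componentwise.

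The only ingredient specific to the generalised setting is Proposition \ref{orderedstokes} itself, which uses the ordered hypothesis on $\theta_\bullet$ to guarantee that $(f_{\theta_\bullet},f^*_{\theta_\bullet})$ actually forms Stokes data of type $(\is,\tau_{\theta_\bullet})$; once that structure is in hand, no further analytic input is required. The main potential obstacle is therefore merely bookkeeping---confirming that the reduction to the single-angle case is genuinely local in $\idx$---but this is immediate from the componentwise form of both compatibility conditions in Definition \ref{def of mutation system}.
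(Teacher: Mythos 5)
Your proposal is correct and is exactly the argument the paper leaves implicit when it calls the corollary a direct consequence of Proposition \ref{orderedstokes}: since $f_{\theta_\bullet,\idx}=f_{\theta_\idx,\idx}$ and $f^*_{\theta_\bullet,\idx}=f^*_{\theta_\idx,\idx}$ by construction, and both compatibility conditions in the definition are stated index by index, the single-angle compatibility (already established for every $\is$-generic $\theta_\circ$, applied with $\theta_\circ=\theta_\idx$) transfers componentwise. Nothing further is needed.
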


By the above corollary, we have a functor from
$\stp_{\bm{k}}^\is$ to ${\sf Mut}_{\bm{k}}(\is, \tau_{\theta_\bullet}).$
To simplify notation, this functor is also denoted by $\mathfrak{A}_{\theta_\bullet}.$ 

\begin{remark}
We also consider a $\Z/2\Z$-graded version $($see Remark $\ref{z2grading} )$.
This generalized functor is also denoted by $\mathfrak{A}_{\theta_\bullet}.$
\end{remark}
          
        \subsection{A reminder for Riemann-Hilbert correspondence}\label{reminder for RH}
        We recall some fundamental results on Riemann-Hilbert correspondence for meromorphic connections 
        on the germ $(\mathbb{C},0)$ of a complex plane at zero.
        A comprehensive reference is \cite{sabint}.
        Let $\mathscr{M}$ be a finite dimensional 
        $\mathscr{O}_{\C,0}(*\{0\})$-vector space
        together with a $\mathbb{C}$-linear map 
        $\nabla:\mathscr{M}\to\mathscr{M}\otimes \Omega_{\mathbb{C},0}^1$
        satisfying the Leibniz rule: $\nabla (as)=a\nabla s+s\otimes da$, 
        $(a\in \mathscr{O}_{\mathbb{C},0}(*\{0\}),s\in \mathscr{M})$.
        We call such a pair $(\mathscr{M},\nabla)$ a meromorphic connection on $(\mathbb{C},0)$.
        We often abbreviate $\nabla$.
       %
        For $c\in \mathbb{C}$, we define a meromorphic connection 
        $\mathscr{E}^{c/z}:=(\mathscr{O}_{\C,0}(*\{0\}),d+d(c/z))$.
         \begin{definition}\label{meromorphic connection of exponential type}
        A meromorphic connection $\mathscr{M}$ is called of exponential type with exponents 
        $C\subset \mathbb{C}$
        if there is an isomorphism of formal meromorphic connections:
         \begin{align}\label{exp type iso}
           \mathscr{M}\otimes\mathbb{C}((z))\simeq
           \left(\bigoplus_{c\in C}\mathscr{E}^{-c/z}\otimes \mathscr{R}_c\right)\otimes \mathbb{C}((z)),
         \end{align}
         where 
         $\mathscr{R}_c$ is a regular singular meromorphic connection for each $c$.
        The category of meromorphic connections on $(\mathbb{C},0)$ of exponential type
        is denoted by $\me$.
        For a finite subset $C\subset \mathbb{C}$, $\me_C$ denotes the full subcategory 
        such that the
        exponents of objects are contained in $C$.
        \end{definition}
        Let $\varpi:\mathrm{Bl}^{\mathbb{R}}_{0}(\mathbb{C})\to\mathbb{C}$ be the real blowing up of 
        the complex plane $\mathbb{C}$ at zero. We have natural inclusions 
        $j_{\mathbb{C}^*}:\mathbb{C}^*\hookrightarrow\mathrm{Bl}^{\mathbb{R}}_{0}(\mathbb{C})$
        and 
        $i_\partial:S^1\simeq \partial\mathrm{Bl}^{\mathbb{R}}_{0}(\mathbb{C})
         \hookrightarrow \mathrm{Bl}^{\mathbb{R}}_{0}(\mathbb{C})$
         where $\partial\mathrm{Bl}^{\mathbb{R}}_{0}(\mathbb{C})$ is the boundary.
        We put  $\widetilde{\mathscr{O}}:=(j_{\mathbb{C}^*})_*\mathscr{O}_{\mathbb{C}^*}$
        where $\mathscr{O}_{\mathbb{C}^*}$ is the sheaf of holomorphic functions on $\mathbb{C}^*$.
        We define subsheaves $\mathscr{A}^{\rm mod}$ and $\mathscr{A}^{\rm rd}$ of $\widetilde{\mathscr{O}}$ 
        as follows:
        For an open subset $U$ in $\mathrm{Bl}^{\mathbb{R}}_{0}(\mathbb{C})$,
        put $U^*:=U\cap \mathbb{C}^*$.
        A section $f$ of $\widetilde {\mathscr{O}}(U)=\mathscr{O}(U^*)$ is 
        a section of $\mathscr{A}^{\rm mod}(U)$ 
        if and only if for any compact subset $K\subset U$, there exist a constant $C_K$ and a non-negative integer 
        $N_K$ such that $|f(z)|\leq C_K|z|^{-N_K}$ for all $z\in U^*\cap K$.
        A section $g\in \mathscr{O}(U^*)$ is a section of 
        $\mathscr{A}^{\rm rd}(U)$ if and only if for any compact subset $K\subset U$,
        and for any non-negative integer $N$, there is a constant $C_{K,N}$ such that 
        $|g(z)|\leq C_{K,N}|z|^N$ for all $z\in U^*\cap K$.
        For $c\in \C$, we also have the subsheaves $e^{c/z}\mathscr{A}^{\rm mod}$, and $e^{c/z}\mathscr{A}^{\rm rd}$
        where $e^{c/z}$ is considered as a section of $\widetilde {\mathscr{O}}$.
        
        Using these sheaves as coefficients, we define various de Rham complexes as follows:
        \begin{itemize}
        \item  
        $\widetilde{\mathrm{DR}}(\mathscr{M})
          :=\{\widetilde{\mathscr{O}}\otimes \varpi^{-1}\mathscr{M}\to 
              \widetilde{\mathscr{O}}\otimes \varpi^{-1}(\Omega^1\otimes\mathscr{M})\}$,
        \item 
        $\mathrm{DR}_{\leqslant c}(\mathscr{M})
          :=\{e^{c/z}\mathscr{A}^{\rm mod}\otimes \varpi^{-1}\mathscr{M}\to 
              e^{c/z}\mathscr{A}^{\rm mod}\otimes \varpi^{-1}(\Omega^1\otimes\mathscr{M})\}$,
        \item
          $\mathrm{DR}_{<c}(\mathscr{M})
          :=\{e^{c/z}\mathscr{A}^{\rm rd}\otimes \varpi^{-1}\mathscr{M}\to 
              e^{c/z}\mathscr{A}^{\rm rd}\otimes \varpi^{-1}(\Omega^1\otimes\mathscr{M})\}, $      
        \end{itemize}
        where $\widetilde{\mathrm{DR}}(\mathscr{M})$ has cohomology in degree $0$ at most.

        Then the pair 
        $$\mathrm{RH}(\mathscr{M})
        :=(\mathscr{H}^0\widetilde{\mathrm{DR}}(\mathscr{M}),
        \mathscr{H}^0 \mathrm{DR}_{\leqslant \bullet}(\mathscr{M}))$$
        is considered as a Stokes filtered local system on $S^1$ via $i_\partial^{-1}$,
        where $\mathscr{H}^0$ is the cohomology of degree zero.
        \begin{theorem}[\cite{mallac}, {\cite[Theorem 5.7]{sabint}}]
        The Riemann-Hilbert functor 
        $\mathrm{RH}:\me_\is \to {\sf St}_{\bm \C}^C$
        is an equivalence of categories.
        \end{theorem}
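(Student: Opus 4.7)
The plan is to prove both directions of the equivalence by reducing them to a common nonabelian $H^1$ classification on $S^1$, built on Lemma \ref{graded classification} for the Stokes side and the Hukuhara--Turrittin--Sibuya asymptotic theory for the meromorphic side. First I would verify that the functor is well-defined: given $\mathscr{M} \in \me_C$, the formal decomposition (\ref{exp type iso}) can be lifted, on any sufficiently narrow open sector, to an actual holomorphic flat isomorphism (the Hukuhara--Turrittin--Sibuya theorem, as developed in \cite{sabint}). Composing flat sections with such local lifts produces the local splittings $\eta$ required by Definition \ref{def stokes}(3), and one checks that the subsheaves $\mathrm{DR}_{\leqslant c}(\mathscr{M})$ match the weighted filtration on $\gr \mathscr{L}$ through the exponential twist by $e^{c/z}$. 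Hence $\mathrm{RH}(\mathscr{M}) \in \st_{\mathbb{C}}^C$.

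For essential surjectivity, fix an elementary model $\mathscr{M}^{\mathrm{el}} = \bigoplus_{c \in C} \mathscr{E}^{-c/z} \otimes \mathscr{R}_c$, whose image $\mathscr{G} := \mathrm{RH}(\mathscr{M}^{\mathrm{el}})$ is the totally split Stokes filtered local system $\bigoplus_c \mathscr{G}_c$ with $\mathscr{G}_c = \mathrm{RH}(\mathscr{R}_c)$. By Lemma \ref{graded classification}, isomorphism classes of $(\mathscr{L}, \mathscr{L}_\bullet) \in \st_{\mathbb{C}}^C$ equipped with an isomorphism $\gr \mathscr{L} \simeq \mathscr{G}$ are classified by $H^1(S^1; \mathcal{A}ut^{<0}(\mathscr{G}))$. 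I would show that meromorphic connections in $\me_C$ formally isomorphic to $\mathscr{M}^{\mathrm{el}}$ are classified by the very same cohomology set: any two sectorial liftings of the formal isomorphism (Malgrange--Sibuya) differ on an overlap by a flat automorphism of $\mathscr{M}^{\mathrm{el}}$ that is asymptotic to the identity, and under $\mathrm{RH}$ such automorphisms correspond exactly to local sections of $\mathcal{A}ut^{<0}(\mathscr{G})$. This bijection of \v{C}ech cocycles induces a bijection on $H^1$, so every $(\mathscr{L}, \mathscr{L}_\bullet)$ arises from some $\mathscr{M}$.

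For fully faithfulness, I would pass to internal Hom: given $\mathscr{M}, \mathscr{N} \in \me_C$, the object $\mathscr{H}om(\mathscr{M}, \mathscr{N})$ is again of exponential type (with exponents in $C - C$), and $\mathrm{Hom}_{\me}(\mathscr{M}, \mathscr{N})$ is computed as the space of global flat sections of moderate growth. This space coincides with global sections of $\mathrm{DR}_{\leqslant 0}(\mathscr{H}om(\mathscr{M}, \mathscr{N}))$, which the Stokes formalism identifies with $\mathrm{Hom}_{\st_{\mathbb{C}}^C}(\mathrm{RH}(\mathscr{M}), \mathrm{RH}(\mathscr{N}))$, proving that $\mathrm{RH}$ is bijective on Hom sets.

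The main obstacle is the underlying asymptotic analysis: proving that the formal isomorphism (\ref{exp type iso}) actually admits flat liftings on narrow sectors, and that the sheaf $\mathcal{A}ut^{<0}(\mathscr{G})$ simultaneously controls both the classification of Stokes filtrations and the classification of meromorphic connections within a fixed formal type. Once these Borel--Ritt/Malgrange--Sibuya style analytic inputs are in hand, the equivalence becomes an essentially formal consequence of a standard nonabelian cocycle argument; in particular, the proof of fully faithfulness reduces to recognizing $\mathrm{id} + \mathcal{E}nd^{<0}$ as the sheaf of flat automorphisms with rapidly decaying difference from the identity.
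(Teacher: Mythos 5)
The paper does not prove this theorem: it is quoted as a known result with a \qed, citing Malgrange and \cite[Theorem 5.7]{sabint}, and your outline follows exactly the standard argument given in those references (Hukuhara--Turrittin--Sibuya sectorial liftings for well-definedness, the Malgrange--Sibuya $H^1(S^1;\mathcal{A}ut^{<0})$ classification matched against Lemma \ref{graded classification} for essential surjectivity, and moderate-growth flat sections of the internal $\mathscr{H}om$ for full faithfulness). Your proposal is correct in outline and coincides with the cited proof, so there is nothing to add beyond the minor remark that essential surjectivity also uses the classical regular-singular correspondence to realize each graded piece $\gr_c\L$ as $\mathrm{RH}(\mathscr{R}_c)$.
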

        \begin{remark}\label{remark RH taiou}
        Let $(\L,\L_\bullet)$ be the Stokes filtered local system corresponding to 
        $(\mathscr{M},\nabla)\in \me_C$ via $\mathrm{RH}$. 
        Then the rank of $\gr_{c_i}\L$ is equal to the rank of $\mathscr{R}_i$ in 
        $(\ref{exp type iso})$.
        It is also equal to the dimension of $V_{c_i}$ of the corresponding Stokes data.
        \end{remark}
        Let $\iota:(\C,0)\to (\C,0)$ be the involution $z\mapsto -z$.
        Set $\mathscr{M}^\vee:=\HOM\big{(}\mathscr{M},\mathscr{O}(*\{0\})\big{)}$.
        Then we define $\mathbb{D}(\mathscr{M}):=\iota^{-1}\mathscr{M}^\vee$.
        
        \begin{proposition}[{\cite[Proposition 5.15]{sabint},}]
        The Riemann-Hilbert functor is compatible with duality, i.e. 
        $\mathrm{RH}(\mathbb{D}\mathscr{M})\simeq \mathbb{D}\mathrm{RH}(\mathscr{M})$.
        \end{proposition}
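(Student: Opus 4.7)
The plan is to split the claim into two independently natural compatibilities and then combine them. The duality on meromorphic connections is $\mathbb{D}\mathscr{M}=\iota^{-1}\mathscr{M}^\vee$, while on Stokes filtered local systems it is $\mathbb{D}\L=\iota^{-1}\L^\vee$. Since $\iota(z)=-z$ on $(\C,0)$ lifts to the real blowing up and restricts to $\theta\mapsto\theta+\pi$ on $S^1$, and since the sheaves $\widetilde{\mathscr{O}},\mathscr{A}^{\mathrm{mod}},\mathscr{A}^{\mathrm{rd}}$ all behave functorially under this lift (with $e^{c/z}$ being sent to $e^{-c/z}$), one gets a tautological isomorphism $\mathrm{RH}(\iota^{-1}\mathscr{M})\simeq \iota^{-1}\mathrm{RH}(\mathscr{M})$ respecting the prescription $(\iota^{-1}\L)_{\leqslant c,\theta}=\L_{\leqslant -c,\theta+\pi}$ from (\ref{involution}). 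Thus the whole content is to prove $\mathrm{RH}(\mathscr{M}^\vee)\simeq \mathrm{RH}(\mathscr{M})^\vee$ as Stokes filtered local systems.

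For this, I would produce a natural pairing. The canonical evaluation $\mathscr{M}^\vee\otimes\mathscr{M}\to\mathscr{O}_{\C,0}(*\{0\})$ is horizontal for the induced connections, hence yields a morphism of de Rham complexes and, passing to $\mathscr{H}^0\widetilde{\mathrm{DR}}$, a pairing of local systems on $S^1$
\[ h:\mathrm{RH}(\mathscr{M}^\vee)\otimes \mathrm{RH}(\mathscr{M})\longrightarrow \mathscr{H}^0\widetilde{\mathrm{DR}}\bigl(\mathscr{O}_{\C,0}(*\{0\})\bigr)\simeq \bm{k}_{S^1}, \]
where the last identification uses that the trivial connection has trivial Stokes structure $\bm{k}_{S^1,\leqslant 0}=\bm{k}_{S^1}$. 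I would check non-degeneracy of $h$ by a dimension count, comparing ranks of $\gr_c$ on both sides via the formal splitting (\ref{exp type iso}) and Remark \ref{remark RH taiou}: if $\mathscr{M}\simeq\bigoplus_c\mathscr{E}^{-c/z}\otimes\mathscr{R}_c$ formally, then $\mathscr{M}^\vee\simeq\bigoplus_c\mathscr{E}^{c/z}\otimes\mathscr{R}_c^\vee$ formally, so the associated graded ranks match those of $\mathrm{RH}(\mathscr{M})^\vee$ via (\ref{vee}).

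The main step, and the delicate one, is to verify compatibility with the Stokes filtrations, namely that
\[ h\bigl(\mathrm{RH}(\mathscr{M}^\vee)_{\leqslant c}\otimes \mathrm{RH}(\mathscr{M})_{<-c}\bigr)=0 \qquad (c\in\C). \]
This amounts to the growth estimate: a local section $s$ of $\mathrm{DR}_{\leqslant c}(\mathscr{M}^\vee)$ has moderate growth after twisting by $e^{-c/z}$, while a section $t$ of $\mathrm{DR}_{<-c}(\mathscr{M})$ has rapid decay after twisting by $e^{c/z}$; hence $h(s,t)$ is a flat section of $\mathscr{O}$ with rapid decay on an open arc, so it vanishes. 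Once this containment is established, the induced map $\mathrm{RH}(\mathscr{M}^\vee)\to \mathrm{RH}(\mathscr{M})^\vee$ is a morphism in $\st_{\C}^C$, and at the level of the graded quotients it reduces to the analogous duality for regular singular connections, where the statement is standard (the graded pieces correspond to $\mathscr{R}_c$ and its dual).

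The expected obstacle is this growth-compatibility step: one needs the product of a section of $e^{c/z}\mathscr{A}^{\mathrm{mod}}$ and a section of $e^{-c/z}\mathscr{A}^{\mathrm{rd}}$ to lie in $\mathscr{A}^{\mathrm{rd}}$, which follows from the fact that $\mathscr{A}^{\mathrm{mod}}\cdot \mathscr{A}^{\mathrm{rd}}\subset \mathscr{A}^{\mathrm{rd}}$, together with horizontality to pass from local to flat sections. Everything else—the identification of graded pieces, non-degeneracy via rank count, and the $\iota^{-1}$ half—is essentially bookkeeping. As the statement is attributed to \cite[Proposition 5.15]{sabint}, I would cite that result rather than reproduce the growth estimate in detail.
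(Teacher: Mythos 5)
Your proposal matches the paper's own argument: the paper likewise factors $\mathbb{D}=\iota^{-1}\circ(\cdot)^\vee$, cites \cite[Proposition 5.15]{sabint} for compatibility with $\mathscr{M}\mapsto\mathscr{M}^\vee$, and notes that compatibility with the involution $\iota^{-1}$ is shown similarly (your tautological blow-up argument). The additional sketch you give of the evaluation pairing and the growth estimate is a correct outline of what the cited result contains, so nothing is missing.
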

        \begin{proof}
         The compatibility with dual $\mathscr{M}\mapsto \mathscr{M}^\vee$ 
         is shown in \cite[Proposition $5.15$]{sabint}.
         The compatibility with the involution $\iota^{-1}$ is shown similarly.
        \end{proof}
        \begin{definition}
         We define the category $\mep$ of meromorphic connections of exponential type with pairings
         as follows$:$
         \begin{enumerate}
          \item[$1.$] An object in $\mep$ is a pair $(\mathscr{M},\mathscr{Q})$ of a meromorphic connection
          $\mathscr{M}\in \me$ and an isomorphism 
           \begin{align}
            \mathscr{Q}:\mathscr{M}\xrightarrow{\sim}\mathbb{D}\mathscr{M}
           \end{align}
           of meromorphic connections such that $\D\mathscr{Q}=\mathscr{Q}$.
          \item[$2.$] Let $(\mathscr{M},\mathscr{Q})$ and $(\mathscr{M}',\mathscr{Q}')$ 
          be objects in $\mep$.
           A morphism from $(\mathscr{M},\mathscr{Q})$ to
           $(\mathscr{M}',\mathscr{Q}')$ 
           is a morphism $\lambda\in \mathrm{Hom}_\me(\mathscr{M},\mathscr{M}')$ such that 
           $\mathbb{D}\lambda\circ{\mathscr{Q}'}\circ\lambda=\mathscr{Q}$.            
         \end{enumerate}
        \end{definition}
        For a finite set $C\subset\mathbb{C}$, we also define $\mep_C$ as a full subcategory
        of $\mep$ whose exponents are contained in $C$.
        \begin{corollary}
         The Riemann-Hilbert functor 
          $\mathrm{RH}:\mep_C\to\stp^C_\C$
          is well defined and gives an equivalence of categories.
        \end{corollary}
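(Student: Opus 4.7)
The plan is to deduce the corollary formally from the two statements immediately preceding it: the Riemann--Hilbert equivalence $\mathrm{RH}:\me_C\simeqto \st^C_{\C}$ and its compatibility with the duality $\D$. The bridge between ``self-dual isomorphism $\mathscr{Q}:\mathscr{M}\simeqto\D\mathscr{M}$'' and ``symmetric non-degenerate sesquilinear pairing'' is already in place on the topological side via the discussion of $\ell_h$, $\D\ell_h$ and Lemma \ref{lem pp}: a non-degenerate symmetric sesquilinear pairing on $(\L,\L_\bullet)$ compatible with the Stokes filtration is the same datum as an isomorphism $\ell:\L\simeqto\D\L$ in $\st^C_{\C}$ with $\D\ell=\ell$.

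First, I would check well-definedness of the functor. Starting from $(\mathscr{M},\mathscr{Q})\in\mep_C$, apply $\mathrm{RH}$ to $\mathscr{Q}$ and compose with the natural isomorphism $\mathrm{RH}(\D\mathscr{M})\simeq \D\,\mathrm{RH}(\mathscr{M})$ from the preceding proposition; this produces an isomorphism $\ell:\L\simeqto\D\L$ of Stokes filtered local systems, hence a non-degenerate sesquilinear pairing $h$ on $\L$ compatible with $\L_\bullet$. Under the natural identification, the condition $\D\mathscr{Q}=\mathscr{Q}$ transports to $\D\ell=\ell$, i.e.\ to the symmetry of $h$. Thus $(\L,\L_\bullet,h)\in\stp^C_{\C}$, and on morphisms one just applies $\mathrm{RH}$.

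For fully faithfulness, observe that a morphism in $\mep_C$ is a morphism $\lambda$ in $\me_C$ satisfying $\D\lambda\circ\mathscr{Q}'\circ\lambda=\mathscr{Q}$, and a morphism in $\stp^C_{\C}$ is a morphism in $\st^C_{\C}$ preserving the pairings, equivalently satisfying $\D(\mathrm{RH}(\lambda))\circ\ell_{h'}\circ\mathrm{RH}(\lambda)=\ell_h$. Since $\mathrm{RH}:\me_C\simeqto\st^C_{\C}$ is fully faithful and natural with respect to $\D$, the two compatibility conditions correspond under the bijection $\lambda\leftrightarrow\mathrm{RH}(\lambda)$. For essential surjectivity, given $(\L,\L_\bullet,h)\in\stp^C_{\C}$, essential surjectivity of the underlying $\mathrm{RH}$ produces $\mathscr{M}\in\me_C$ with $\mathrm{RH}(\mathscr{M})\simeq(\L,\L_\bullet)$; the isomorphism $\ell_h:\L\simeqto\D\L$ then lifts through full faithfulness to an isomorphism $\mathscr{Q}:\mathscr{M}\simeqto\D\mathscr{M}$, and $\D\ell_h=\ell_h$ forces $\D\mathscr{Q}=\mathscr{Q}$.

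The only real point to check, and the only plausible obstacle, is that the comparison isomorphism $\mathrm{RH}\circ\D\simeq\D\circ\mathrm{RH}$ is genuinely an isomorphism of duality functors in the strong sense needed to transport the self-duality condition, i.e.\ that it is compatible with the canonical biduality isomorphisms $\mathscr{M}\simeq\D\D\mathscr{M}$ and $\L\simeq\D\D\L$. This is a formality from the constructions of $\D$ on both sides (involution $\iota$ plus $\HOM(-,\mathscr{O}(*\{0\}))$ on meromorphic connections, and $\iota^{-1}$ plus $\HOM(-,{\bm k}_{S^1})$ on local systems, together with the filtration conventions \eqref{involution}--\eqref{duality for Stokes filtration}), but it is the one place where one must be careful so as not to miss a sign or a shift. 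Once this naturality is recorded, the corollary is just the general fact that an equivalence of categories compatible with a duality restricts to an equivalence between the corresponding categories of self-dual objects.
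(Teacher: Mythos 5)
Your argument is correct and is exactly the reasoning the paper leaves implicit: the corollary is stated with no written proof because it is the formal consequence of the equivalence $\mathrm{RH}:\me_C\simeqto\st^C_{\C}$, its compatibility with $\D$, and the identification of symmetric non-degenerate sesquilinear pairings compatible with $\L_\bullet$ with self-dual isomorphisms $\ell:\L\simeqto\D\L$, which is precisely how you proceed. Your flagged point about compatibility with biduality is the right thing to be careful about, and it does hold by the explicit constructions of $\D$ on both sides, so there is no gap.
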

        \begin{remark}\label{zisuu}
         As in Remark $\ref{z2grading}$, we consider $\Z/2\Z$-graded meromorphic connections.
         The connections are assumed to be grade-preserving.
         The only difference with non-graded case is the pairing. 
         The pairing is defined to be graded-symmetric so that 
         the equivalence $\mathrm{RH}:\mep_C\to\stp^C_\C$ is generalized to 
         the $\Z/2\Z$-graded case. The generalized functor is also denoted by $\mathrm{RH}$.
        \end{remark}
                 \section{A-mutation  systems}\label{sectionamut}
          \subsection{Quantum connections of exponential type}
           Let $X$ be a Fano manifold, that is, $X$ is a smooth projective variety 
            whose anti-canonical bundle $\omega_X^{-1}:=\det TX$ is ample.
            Let $H^\bullet(X)$ 
            denote the Betti cohomology group of $X$ over $\mathbb{C}$.
            For $\alpha_1,\alpha_2,\dots,\alpha_n \in H^\bullet(X)$, 
            let $\langle \alpha_1,\alpha_2,\dots,\alpha_n \rangle^X_{0, n, d}$ denote 
            the genus-zero $n$-points Gromov-Witten invariant of degree $d\in H_2(X,\Z)$. 
            The quantum cup product $\alpha_1*_\tau \alpha_2$ of two classes 
            $\alpha_1,\alpha_2\in \coho{X}$ with parameter $\tau\in \coho{X}$ is given by 
            \begin{equation}
             (\alpha_1*_\tau \alpha_2,\alpha_3)_X
              =\sum_{d\in \Eff{X}}\sum_{n=0}^\infty
               \langle \alpha_1,\alpha_2,\alpha_3,\tau,\dots,\tau \rangle_{0, 3+n, d}^X
            \end{equation}
            where $(\alpha,\beta)_X:=\int_X\alpha\cup\beta$ is the Poincar\'e pairing and 
            $\Eff{X}\subset H_2(X;\Z)$ is the set of effective curve classes.
            It is not known if the quantum products $*_\tau$ converge in general, 
            however the quantum cup product for $\tau\in H^2(X)$ makes sense
            since $X$ is a Fano manifold.
            \begin{definition}[\cite{dubgeo2}]\label{def of qdm}
             Consider the trivial vector bundle 
             $\mathcal{H}_X:=H^\bullet(X)\otimes \mathcal{O}_{\C}$ over $\C_z$.
             Define the meromorphic flat connection 
             $\nabla:\mathcal{H}_X\to\mathcal{H}_X\otimes\Omega^1_\C(\log \{0\})\otimes\mathcal{O}_\C(\{0\})$
             called a quantum connection by
             \begin{align}\label{quantum connection}
              \nabla:={d}-\Big{(}\frac{1}{z}(c_1(X)*_0)-{\mu}\Big{)}\frac{dz}{z}, 
             \end{align}
             where $z$ denotes the coordinate on $\C$,
             $\mu\in \End(\coho{X})$ is the grading operator defined by 
             $\mu|_{H^{p}(X)}:=({p-\dim_\C X})/{2}\cdot \id_{H^{p}(X)}$,
             and $c_1(X)\in H^2(X;\Z)$ is the first Chern class of $X$.
             We also set the meromorphic connection
             $\mathscr{M}_X:=\mathcal{H}_X\otimes \mathscr{O}_{\C,0}$.
             We define a $\Z/2\Z$-graded symmetric sesquilinear pairing 
               $Q_X:\iota^*\mathcal{H}_X\otimes \mathcal{H}_X\to \mathcal{O}_\C$ 
               by $Q_X(s, t)(z):=(s(-z),t(z))_X$ where $s, t \in \mathcal{H}_X$.            
               The induced pairing on $\mathscr{M}_X$ is denoted by $\mathscr{Q}_X$.
             \end{definition}
            We consider the following:
            \begin{assumption}[{\cite[Conjecture 3.4]{kkp1}}]\label{assumption of exponential type}
             The meromorphic connection $\mathscr{M}_X$ is of exponential type.
            \end{assumption}       
            Under this assumption, 
            we have the following:
            \begin{corollary}[{Corollary of Assumption \ref{assumption of exponential type}}]\label{cor of exp type}
            Let $C_X$ be the set of eigenvalues of $c_1(X)*_0$.
             The set of exponents of $\mathscr{M}_X$ coincide with 
             $-C_X$. 
             In other words, we have an isomorphism 
             $$ \mathscr{M}_X\otimes \C((z))\simeq 
             \left(\bigoplus_{c\in C_X} \mathscr{E}^{c/z}\otimes \mathscr{R}_c\right)
             \otimes \C((z))$$
             where $\mathscr{R}_c$ is regular singular. 
             The rank of the regular singular part $\mathscr{R}_c$ 
             is 
             the dimension of the eigenspace of $c_1(X)*_0$ associated with $c$. 
            \end{corollary}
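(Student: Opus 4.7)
The strategy is to identify the set of exponents and the ranks of the regular singular blocks by reading off the \emph{principal part} of the quantum connection at $z=0$. Concretely, since the connection form of $\nabla$ has a pole of order $2$, the operator $z^2 \nabla_{\partial_z}$ preserves the $\mathscr{O}_{\C,0}$-lattice $\mathscr{M}_X$ and therefore induces a $\C$-linear endomorphism of $\mathscr{M}_X / z\mathscr{M}_X \cong H^\bullet(X)$. A direct computation from $(\ref{quantum connection})$ gives
\begin{align*}
z^2 \nabla_{\partial_z} = z^2 \partial_z - (c_1(X)*_0) + z\mu,
\end{align*}
so this induced endomorphism is exactly $-c_1(X)*_0$, whose characteristic polynomial is $\prod_{\lambda \in C_X}(T+\lambda)^{m_\lambda}$, where $m_\lambda$ is the dimension of the $\lambda$-eigenspace of $c_1(X)*_0$.

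Next, I would carry out the same computation on the formal model. Under Assumption \ref{assumption of exponential type}, pick a finite set $C\subset\C$ and regular singular $\mathscr{R}_c$ with
\[
\mathscr{M}_X \otimes \C((z)) \ \simeq\ \bigoplus_{c\in C}\mathscr{E}^{-c/z}\otimes \mathscr{R}_c \otimes \C((z)).
\]
On each summand the operator $z^2\nabla_{\partial_z}$ acts by $c\cdot \id_{\mathscr{R}_c}$ modulo $z$ (the $\mathscr{E}^{-c/z}$-factor contributes $c$, the regular singular factor contributes terms of order $\ge 0$ in $z$). Choosing, on the model side, the natural lattice obtained from the Deligne extensions of the $\mathscr{R}_c$'s, the characteristic polynomial of the mod-$z$ reduction of $z^2\nabla_{\partial_z}$ equals $\prod_{c\in C}(T-c)^{\mathrm{rank}(\mathscr{R}_c)}$.

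The key remaining point is to argue that this characteristic polynomial is a formal invariant of the connection, i.e. independent of the choice of lattice, so that it may be transported across the isomorphism $\Phi$ given by the assumption. This is the standard fact that for a meromorphic connection with a pole of order exactly $2$, any $\C[[z]]$-lattice stable under $z^2\nabla_{\partial_z}$ whose reduction realizes the full formal decomposition (a ``good'' lattice in the sense of Malgrange) yields the same spectrum of the induced endomorphism; both the lattice $\mathscr{M}_X$ and the natural lattice on the model side are good. Granting this invariance, the two computations above give
\[
\prod_{\lambda\in C_X}(T+\lambda)^{m_\lambda}\ =\ \prod_{c\in C}(T-c)^{\mathrm{rank}(\mathscr{R}_c)},
\]
which forces $C = -C_X$ and $\mathrm{rank}(\mathscr{R}_{-\lambda}) = m_\lambda$ for each $\lambda\in C_X$. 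Reindexing the summation by $c' = -c$ in order to follow the conventions of the Corollary statement (so that $\mathscr{E}^{-c/z}$ becomes $\mathscr{E}^{c'/z}$ with $c'\in C_X$, and the rank of the corresponding regular singular block is the dimension of the $c'$-eigenspace of $c_1(X)*_0$) yields the asserted isomorphism. The main technical obstacle is the lattice-independence statement in the last paragraph; everything else is a direct unwinding of definitions.
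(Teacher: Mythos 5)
Your first step coincides with the paper's: from $(\ref{quantum connection})$ one reads off that $z^2\nabla_{\partial_z}$ preserves the lattice $\mathcal{H}_X\otimes\C[[z]]$ and induces $-c_1(X)*_0$ on the fibre at $z=0$, and the model-side lattice (Deligne extensions twisted by the exponentials) has block-diagonal leading term. The gap is the step you yourself flag as ``the main technical obstacle'': you transport the characteristic polynomial of the leading term across the formal isomorphism by appealing to a ``standard fact'' that all \emph{good} lattices (``whose reduction realizes the full formal decomposition, in the sense of Malgrange'') give the same spectrum, and you then assert without argument that $\mathcal{H}_X\otimes\C[[z]]$ is such a lattice. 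But the statement that the quantum lattice is compatible with (or ``realizes'') the Hukuhara--Turrittin decomposition is essentially \emph{the thing to be proved}; assuming it begs the question. Moreover, the invariance principle as you state it is not a quotable off-the-shelf theorem: in general the eigenvalues of the leading term of an order-two lattice do not directly record the exponential factors (e.g.\ a nilpotent leading term, as for the matrix $\begin{pmatrix}0&1\\ z&0\end{pmatrix}$, is compatible with ramified irregular factors), so some argument using the exponential-type hypothesis is unavoidable before you may equate the two characteristic polynomials.

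The paper fills exactly this hole in two moves that your proposal would need to reproduce. First, it invokes the splitting lemma for lattices of pole order two (Exercise 5.9 in Chapter II of Sabbah's book): the formal connection $\mathscr{M}_X\otimes\C((z))$ decomposes as $\bigoplus_{c\in C_X}\mathscr{M}_c$ \emph{compatibly with the lattice}, indexed by the generalized eigenspaces of $c_1(X)*_0$ (note: generalized eigenspaces, i.e.\ algebraic multiplicities, which is also how your multiplicities $m_\lambda$ must be read). Second, it compares this decomposition with the one provided by Assumption \ref{assumption of exponential type} and shows that any flat morphism $\mathscr{M}_c\to(\mathscr{E}^{c_i/z}\otimes\mathscr{R}_i)\otimes\C((z))$ vanishes when $c\neq c_i$, by writing the flatness equation $z\partial_z B+(A'B-BA)=z^{-1}B\big((c_i-c)\mathrm{Id}+N\big)$ in suitable frames and using that $(c_i-c)\mathrm{Id}+N$ is invertible to force $B=0$ by comparing orders. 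This cross-term vanishing is what legitimately matches eigenvalues of the leading term with exponents and ranks; your argument, as written, skips it. The bookkeeping at the end of your proposal (sign flip $C=-C_X$, reindexing $\mathscr{E}^{-c/z}\mapsto\mathscr{E}^{c'/z}$ with $c'\in C_X$, and $\mathrm{rank}\,\mathscr{R}_{-\lambda}=m_\lambda$) is fine once this step is supplied.
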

            \begin{proof}
             For the lattice $\mathcal{H}_X$, we have $(z^2\nabla_{\partial_z})|_{z=0}=-c_1(X)*_0$  
             identifying the fiber of $\mathcal{H}_{X}$ at $z=0$ with $H^\bullet(X)$.
             By Exercise 5.9 in \cite[II]{sabiso}, we have a decomposition  
             $\mathscr{M}_X\otimes \C((z))\simeq \bigoplus_{c\in C_X}\mathscr{M}_c$  
             compatible with the lattice and the generalized eigenvalue decomposition of $c_1(X)*_0$.
             On the other hand, by Assumption \ref{assumption of exponential type}, 
             we have an isomorphism 
             $\mathscr{M}_X\otimes \C((z))\simeqto 
             \left( \bigoplus_{i=1}^{m} \mathscr{E}^{c_i/z}\otimes\mathscr{R}_i\right)
             \otimes\C((z))$
             for some distinct complex numbers $c_1,\dots, c_m$.
             Consider the induced morphism 
             $\phi:\mathscr{M}_c\to  (\mathscr{E}^{c_i/z}\otimes\mathscr{R}_i)\otimes \C((z))$
             for some $c\in C_X$ and $i=1,\dots, m$. 
             We claim that $\phi=0$ if $c\neq c_i$ (This claim implies the corollary).
             Take a frame $\bm{v}=(v_k)_k$ of $\mathscr{M}_c$ so that $v_k\in \mathcal{H}_X\otimes \C[[z]].$
             Then we have 
             $$\nabla_{z\partial_z} \bm{v}=\bm{v}\big{(}-z^{-1}(c\cdot \mathrm{Id}+N)+A(z)\big{)}$$
             where $\mathrm{Id}$ is the identity matrix, 
             $N$ is a nilpotent constant matrix, and $A(z)$ is a matrix with entries in $\C[[z]]$.
             We also take a frame $\bm{w}$ in $(\mathscr{E}^{c_i/z}\otimes\mathscr{R}_i)\otimes\C((z))$ 
             so that $\nabla_{z\partial_z}\bm{w}=\bm{w}(-z^{-1}c_i\cdot \mathrm{Id}+A'(z))$ where 
             $A'(z)$ is a matrix with entries in $\C[[z]]$.
             If we take a matrix $B=B(z)$ with entries in $\C((z))$ so that $\phi(\bm{v})=\bm{w}B$, 
             the flatness condition of $\phi$ is written as follows:
             \begin{equation}\label{order compare}
              z\partial_z B+(A'B-BA)=z^{-1}B\big{(}(c_i-c)\cdot \mathrm{Id}+N\big{)}.
             \end{equation} 
             Since we assume that $c\neq c'$ and $N$ is nilpotent, $((c_i-c)\cdot \mathrm{Id}+N)$ 
             is an invertible constant matrix. 
             By comparing orders of entries between both sides of (\ref{order compare}),  
             we conclude that $B=0$.
            \end{proof}
%
%
%
%
%
%
%
%
            \subsection{Fundamental solutions, pairings, and A-mutation systems}
            Although the following proposition is proved for even degrees of the cohomology, 
            the same proof can be applied for the $\Z/2\Z$-graded case.
              \begin{proposition}[{\cite[Lemma 2.4, and Lemma 2.5]{dubpai}, see also \cite[Proposition 2.3.1]{ggi}}]\label{fundamental solution}
              There exists a unique holomorphic function 
              $S:\mathbb{P}^1\setminus \{0\}\to \mathrm{End}(H^\bullet(X))$ 
              with $S(\infty)=\id_{H^\bullet(X)}$
              such that 
              \begin{align*}
                 \nabla(S(z)z^{-\mu}z^{\rho_{\!{}_X}}\alpha)=0& \text{ for all } \alpha \in \coho{X}, \\
                 T(z):=z^\mu S(z)z^{-\mu} &\text{ is regular at } z=\infty \text{ and } T(\infty)=\id_{\coho{X}},
              \end{align*} 
              where ${\rho_{\!{}_X}}=(c_1(X)\cup)\in \End(\coho{X})$ 
              and we define $z^{-\mu}:=\exp (-\mu\log z)$, 
              $z^{\rho_{\!{}_X}}:=\exp ({\rho_{\!{}_X}}\log z)$.
              
              Moreover, we have
              \begin{align*}
              \hspace{1.2in}
               (S(-z)\alpha,S(z)\beta)_X=(\alpha,\beta)_X \text{ for all } \alpha,\beta \in \coho{X}.
               \hspace{1.6in}
              \end{align*}
                     \end{proposition}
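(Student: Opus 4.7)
The plan is to construct $S$ via a power-series ansatz at $z = \infty$, determine its coefficients through a recursion, and then derive the pairing identity by a separate ODE argument. Writing $S(z) = \id + \sum_{n \ge 1} S_n z^{-n}$ and using the commutation $[\mu, \rho_{\!{}_X}] = \rho_{\!{}_X}$ (equivalently $z^{-\mu}\rho_{\!{}_X} z^{\mu} = z^{-1}\rho_{\!{}_X}$), substitution into the flatness equation $\nabla(S(z) z^{-\mu} z^{\rho_{\!{}_X}}\alpha) = 0$ and comparison of coefficients of $z^{-n}$ yield the recursion
\[
[\mu, S_n] - n S_n = (c_1(X) *_0)\,S_{n-1} - S_{n-1}\,\rho_{\!{}_X} \qquad (n \ge 1).
\]
On $\End(H^\bullet(X))$, the operator $L_n : A \mapsto [\mu, A] - n A$ is diagonalizable with eigenvalue $k - n$ on the $\mathrm{ad}_\mu$-weight $k$ summand, so $L_n$ is invertible whenever $k \neq n$. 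The regularity of $T(z) = z^\mu S(z) z^{-\mu}$ at $z = \infty$ with $T(\infty) = \id$ forces $S_n^{(k)} = 0$ for all $k \ge n$, since such a summand would contribute $z^{k-n}$ to $T$; combined with the recursion this uniquely determines each $S_n$, provided the right-hand side has only ad-weights strictly less than $n$.

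I would verify this consistency by induction. Decompose $c_1(X)*_0 = \rho_{\!{}_X} + Q$, where $Q$ collects the quantum corrections. Because $X$ is Fano, each nonzero effective class $d$ satisfies $\int_d c_1(X) > 0$, so the corresponding contribution to $Q$ has $\mathrm{ad}_\mu$-weight $1 - \int_d c_1(X) < 1$; hence $Q$ has ad-weights $< 1$. The right-hand side of the recursion becomes $[\rho_{\!{}_X}, S_{n-1}] + Q\,S_{n-1} - S_{n-1}\,Q$; the base case $n = 1$ gives $Q$ directly, and for $n \ge 2$ the inductive hypothesis $S_{n-1}^{(k)} = 0$ for $k \ge n-1$ forces every term to have ad-weight $< n$. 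Convergence of the resulting series on $\mathbb{P}^1 \setminus \{0\}$ follows because $(\mathcal{H}_X, \nabla)$ has a regular singularity at $z = \infty$ (visible in the coordinate $w = 1/z$), and the standard fundamental-solution existence at a regular singular point provides a holomorphic $S$ on $\mathbb{P}^1 \setminus \{0\}$.

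For the pairing identity, set $T(z) := S(-z)^* \circ S(z)$, where $A^*$ denotes the adjoint with respect to $(\cdot, \cdot)_X$; the identity is equivalent to $T(z) = \id$. Using $\mu^* = -\mu$ (Poincar\'e duality, since $H^p$ pairs with $H^{2\dim X - p}$), $\rho_{\!{}_X}^* = \rho_{\!{}_X}$, and $(c_1(X)*_0)^* = c_1(X)*_0$ (Frobenius self-adjointness of quantum multiplication), the recursion for $S$ and its adjoint combine to yield the ODE
\[
z\partial_z T(z) = -[\mu, T(z)] + \tfrac{1}{z}[\rho_{\!{}_X}, T(z)].
\]
Expanding $T(z) = \id + \sum_{n \ge 1} T_n z^{-n}$ gives $[\mu, T_n] - n T_n = [\rho_{\!{}_X}, T_{n-1}]$, while the ad-weight bounds on $S$ and $S^*$ propagate to the constraint $T_n^{(k)} = 0$ for $k \ge n$. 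The same inductive argument as for $S$ then forces $T_n = 0$ for every $n \ge 1$, so $T \equiv \id$ and the pairing identity is established.

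The main technical obstacle is the ad-weight bookkeeping --- verifying that the recursion's right-hand side never reaches the ad-weight where $L_n$ is non-invertible --- and its dependence on the Fano hypothesis through the strict positivity $\int_d c_1(X) > 0$. Once that degree analysis is set up, existence, uniqueness, and the pairing identity all follow from parallel inductive arguments.
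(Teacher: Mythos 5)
Your argument is correct, and it is essentially the standard proof: the paper itself does not prove this proposition but imports it from \cite[Proposition 2.3.1]{ggi} (adding only the remark that the argument carries over to the $\Z/2\Z$-graded case), and your recursion $[\mu,S_n]-nS_n=(c_1(X)*_0)S_{n-1}-S_{n-1}\rho_{\!{}_X}$, the invertibility of $L_n$ off ad-weight $n$, the Fano weight bound on the quantum correction $Q$, and the flatness ODE $z\partial_z T=-[\mu,T]+\tfrac1z[\rho_{\!{}_X},T]$ for $T(z)=S(-z)^*S(z)$ are exactly the ingredients of that proof. Two cosmetic points: the right-hand side of the recursion is $[\rho_{\!{}_X},S_{n-1}]+QS_{n-1}$ (your extra $-S_{n-1}Q$ term is not there, though it changes nothing in the weight bookkeeping), and the step ``$T(\infty)=\id$ forces $S_n^{(k)}=0$ for $k\ge n$'' tacitly uses that contributions of distinct ad-weights to a fixed power of $z$ are linearly independent, plus a one-line continuation argument to see that the locally convergent $S$ is single-valued and holomorphic on all of $\mathbb{P}^1\setminus\{0\}$; both are easily supplied.
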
              
              Let $\theta_\circ$ be a real number generic with respect to $-C_X$.
              Under Assumption \ref{assumption of exponential type}, we have the mutation system 
              $\mathfrak{A}_{\theta_\circ} \mathrm{RH}(\mathscr{M}_X,\mathscr{Q}_X)$.
              Denote the underlying polarized vector space by
              $(V_{\mathscr{M}_X},[\cdot,\cdot\rangle_{\mathscr{Q}_X})$,
              which is independent of a choice of $\theta_\circ$.
              
              Let $\mathrm{Bl}^\R_0(\C)$ be the real blowing up of $\C$ at $0$.
              Let $\widetilde{\mathrm{Bl}^\R_0(\C)}$ be its universal covering.
              The universal covering $\widetilde{\C^*_z}$ of ${\C^*_z}$ can be considered as an 
              open subset of $\widetilde{\mathrm{Bl}^\R_0(\C)}$. 
              Its complement is identified with $\R$, which is universal covering of $S^1$.
              By construction, the vector space $V_{\mathscr{M}_X}$ can be 
              identified with the space of flat section of $\tilde{p}^{*}\mathcal{H}_X$ 
              where $\tilde{p}:\widetilde{\C^*_z}\to\C_z$ denotes the composition of the universal covering and the inclusion.
              Namely, we can canonically identify $V_{\mathscr{M}_X}$ with
              \begin{align}\label{flat section}
              \left\{ s:\widetilde{\C^*_z}\to H^\bullet(X)\middle | \nabla s=0 \right\}. 
              \end{align}              
             
             The first half of Proposition \ref{fundamental solution} gives  an isomorphism 
             $\coho{X}\simeq V_{\mathscr{M}_X}$             
             by 
             \begin{align*}
             \Phi(\alpha):=(2\pi)^{-\dim X/2}S(z) z^{-\mu} z^{\rho_{\!{}_X}} \alpha,
             \end{align*}
             where $V_{\mathscr{M}_X}$ is identified with (\ref{flat section}).
             By the second half of Proposition  \ref{fundamental solution},
             if we put 
             \begin{align*}
              [\alpha,\beta)_X:=\frac{1}{(2\pi)^{\dim X}}\int_Xe^{\pi\i\mu}e^{-\pi\i\rho_{{}\!_X}}\alpha \cup \beta
             \end{align*}             
             for $\alpha,\beta\in\coho{X}$, the isomorphism $\Phi$ is compatible with the pairings
             $ [\cdot,\cdot)_X$  and $[\cdot,\cdot\rangle_{\mathscr{Q}_X}$.    
             \begin{definition}\label{3.5}
              The mutation system on 
              $(\coho{X},[\cdot,\cdot)_X)$ defined via the isomorphism 
              $$\Phi:(\coho{X},[\cdot,\cdot)_X)\simeqto(V_{\mathscr{M}_X},[\cdot,\cdot\rangle_{\mathscr{Q}_X})$$
              described above 
              is called an A-mutation system of $X$, which is also denoted by 
              $\mathfrak{A}_{\theta_\circ} \mathrm{RH}(\mathscr{M}_X,\mathscr{Q}_X)$.
              The pair $(\tau_{\theta_\circ},{}^{A}\!f_{\theta_\circ})$ denotes the underlying splitting data.
             \end{definition}
             We can describe this mutation system more concretely as follows:             
             \begin{lemma}\label{criterion c}
             Fix a hermitian metric $\| \cdot\|$ on $\coho{X}$.
             Let $c$ be a complex number in $-C_X$. 
             A class $\alpha\in\coho{X}$ is in $\mathrm{Im} {}^A\! f_{\theta_\circ, \idx}$
             if and only if there exists a  non-negative integer $N$ 
             such that 
             \begin{align}\label{(3.5)}
             \| e^{-c/z}\Phi(\alpha)(z)\|\leq O(|z|^{-N})& 
             \end{align}  
             for $\Im \log z\in I_{\theta_\circ}.$
             \end{lemma}    
             \begin{proof}
             In general, for $(\L,\L_\bullet)\in\st^C$ and $C$-generic $\theta_\circ\in\R$, 
             we have 
             $$\mathrm{Im} f_{\theta_{\circ, c}}=\left\{s\in V_\L \middle| s_\theta \in (p^{-1}\L_{\leqslant c})_\theta \text{ for } 
             \theta\in I_{\theta_\circ} \right\}.$$
             In the case $(\L,\L_\bullet)=\mathrm{RH}(\mathscr{M}_X)$, 
             the germ 
             $(p^{-1}\L_{\leqslant c})_\theta$ 
             can be identified with the space of 
             $\alpha \in \coho{X}$ 
             with the following properties: 
             There exists an  open neighborhood $U$ of $\theta$ in $\widetilde{\mathrm{Bl}^\R_0(\C)}$ 
             (remark that $\theta \in \R\subset \widetilde{\mathrm{Bl}^\R_0(\C)}$)
             such that 
             for any compact subset $K$ in $U$ there exists a positive integer 
             $N$ with 
             $ \| e^{-c/z}\Phi(\alpha)( z)\|\leq O(|z|^{-N})$ for $\log z\in K\cap \widetilde{\C^*}$.
             Hence the condition (\ref{(3.5)}) implies $\alpha \in \im \lua f_{\theta_\circ, \idx}.$
             On the other hand, by replacing $I_{\theta_\circ}$ with a bigger $(-\is_X)$-good open interval,
             we see that $\alpha \in \im \lua f_{\theta_\circ, \idx}$ implies the condition (\ref{(3.5)}).
             \end{proof}
             \begin{remark}\label{jigen}
             By Remark $\ref{remark RH taiou}$ and Corollary $\ref{cor of exp type}$,
             the dimension of $\mathrm{Im}f_{\theta_\circ,c}$ is equal to the dimension 
             of the eigenspace of $-c_1(X)*_0$ associated with $c$.
             \end{remark}
\section{B-mutation systems}\label{sectionbmut}

\subsection{Hochschild homology for smooth projective varieties}
We recall some definitions and properties of Hochschild homology of smooth projective varieties.
We mainly follow the formulation of \cite[\S 5]{huybook} (see also \cite{lunlef}).

Let $X, Y, Z$ be smooth projective varieties defined over $\C$.
The dimension of $X, Y, Z$ are denoted by $d_X, d_Y, d_Z$ respectively. 
We denote by $D^b(X)$ the triangulated category of bounded complexes of coherent sheaves on $X$.
The shift functor is denoted by $[1]$. 
For a morphism $f:X\to Y$, we denote by $f_*$ the right derived direct image functor and $f^*$ the left derived inverse image functor.
Moreover the left derived tensor product is denoted by $\otimes$.

Let $\E \in  D^b(X\times Y)$ and $\F \in D^b(Y\times Z)$ be bounded complexes.
We define the exact functor
\[\Phi_\E:D^b(X)\to D^b(Y)\] by 
\[\Phi_\E(-):=(\pi_2)_*(\pi^*_1(-)\otimes\E),\]
where $\pi_i$ is the projection from $X\times Y$ to the $i$-th factor.
For the diagonal sheaf 
\[\oo_\Delta:=\Delta_*\oo_X \in D^b(X\times X),\]
we have $\Phi_{\oo_\Delta}\cong \id,$ where $\Delta:X\to X \times X $ is the diagonal embedding.
We define the composition of kernels $\E$ and $\F$ by
\[\F \circ \E:=(\pi_{1,3})_*(\pi^*_{1,2}\E \otimes \pi^*_{2,3}\F) \in D^b(X \times Z),\]
where $\pi_{i,j}$ is the projection from $X\times Y\times Z$ to the $i^{\text{th}}\times j^{\text{th}}$ factor.
Then we have
$\Phi_{\F \circ \E}\cong \Phi_\F\circ \Phi_\E$.


We define the Hochschild homology of $X$ as follows:
\[\mathrm{HH}_k(X):=\bigoplus_{p-q=k}H^q(X, \Omega^p_X),
   \quad \HH{X}:=\bigoplus_{k \in \Z} \mathrm{HH}_k(X)\]
This is a $\Z$-graded vector space.
\begin{remark}[see, e.g., \cite{calmuk2}, \cite{hkr}, \cite{swahoc}, \cite{yekcon}]
More precisely, the Hochschild homology is defined by
\[\mathrm{HH}_k(X):=\Hom_{D^b(X)} (\oo_X[k], \Delta^* \oo_{\Delta})\]
and we have the Hochschild-Kostant-Rosenberg isomorphism 
\[\mathrm{I}_\mathrm{HKR} : \bigoplus_{k \in \Z} \mathrm{HH}_k(X) \simeqto
   \bigoplus_{k \in \Z} \bigoplus_{p-q=k} H^q(X, \Omega^p_X).\] 
We use the right hand side as a definition of the Hochschild homology.
\end{remark}
For a homogeneous element $\alpha \in \HH{X}$,
the degree of $\alpha$ is denoted by $\deg \alpha.$
We identify $\HH{X}$ with $\coho{X}$ via the Hodge decomposition (as  $\Z/2\Z$-graded vector spaces).
For a morphism $f:X\to Y$, we denote by $f_*$ the Gysin map.
\begin{remark}
By definition, $f_*$ satisfies the following:
\[\frac{1}{(\tat)^{\di_Y}} \int_Y f_*\alpha \cup \beta =
   \frac{1}{(\tat)^{\di_X}} \int_X \alpha \cup f^* \beta. \]
Here $\alpha \in \HH{X}$ and $\beta \in \HH{Y}.$ 
\end{remark} 
For $\E \in D^b(X\times Y),$ a morphism 
$\phi_{\E}:\HH{X}\to \HH{Y}$ is defined as follows:
\[\phi_{\E}(-):=(\pi_2)_*(\pi^*_1(-)\cup\upsilon(\E)),\]
where 
\[\upsilon(\E):=\Ch(\E)\sqrt{\Td_{X\times Y}}\]
is the Mukai vector.
Then we have 
\[\phi_{\oo_\Delta}=\id, \quad \phi_{\F}\circ \phi_{\E}=\phi_{\F \circ \E}.\]
We note that $\phi_{\E}$ preserves the $\Z$-grading.

\begin{remark}
By definition, Chern characters $\Ch$ and Todd classes $\Td$ of bounded complexes of coherent sheaves on $X$ are elements of
$\im \big(\bigoplus^{d_X}_{k=0} H^{2k}\big(X ; \Z(k)\big) \to \coho{X}\big),$
where $\Z(k)$ is the $k$-th Tate twist of $\Z$ $($see \cite[\S 3.4]{ggi}$)$.
\end{remark} 

An exact functor $F:D^b(X)\to D^b(Y)$ is called a Fourier-Mukai functor if there exists 
$\E \in D^b(X\times Y)$ such that $F \cong \Phi_\E.$
The complex $\E$ is called a Fourier-Mukai kernel of $F.$ 
For a Fourier-Mukai functor $F \cong \Phi_\E$, we define $\phi_{F}$ by $\phi_{\E}.$
By the next lemma, we see $\phi_{F}$ is independent of a choice of a Fourier-Mukai kernel $\E$.

\begin{lemma}[{\cite[Corollary 4.4]{stenon}}]\label{cs}
Let $\E_1,\E_2 \in D^b(X\times Y).$ If $\Phi_{\E_1}\cong \Phi_{\E_2},$ then $\phi_{\E_1}=\phi_{\E_2}.$
\end{lemma}
\begin{proof}
 By \cite[Corollary 4.4]{stenon}, we have $[\E_1]=[\E_2]$ in the $K$-group, which implies the lemma.
\end{proof}

Let $\E_1\to \E_2\to\E_3\to\E_1[1]$
be an exact triangle in $D^b(X\times Y)$.
Since Chern characters are additive, we have
\[\phi_{\E_2}=\phi_{\E_1}+\phi_{\E_3}.\]
By considering the case $\E_2\cong 0$, we have
$\phi_{\E[1]}=-\phi_{\E}$.

We define the left and right adjoint kernels $\E^*, \E^!\in D^b(Y\times X)$ by 
  \[\E^*:=(\sigma_{XY})^*(\E^\vee \otimes \pi^*_2 \omega_Y[d_Y]),\quad
     \E^!:=(\sigma_{XY})^*(\E^\vee \otimes \pi^*_1 \omega_X[d_X]),\] 
where $\sigma_{XY}:Y \times X \to X \times Y$ is the natural isomorphism,
$\E^\vee$ is the dual $R\HOM(\E, \oo_{X\times Y})$, 
and $\omega_X, \omega_Y$ are the canonical bundles. 
Then we see that 
$\Phi_{\E^*}$ is the left adjoint of $\Phi_\E$ and $\Phi_{\E^!}$ is the right adjoint of $\Phi_\E$.
Since the operations $*$ and $!$ preserve exact triangles, it follows that 
 \begin{align}\label{additive}
   \phi_{\E^*_2}=\phi_{\E^*_1}+\phi_{\E^*_3}, \quad    
   \phi_{\E^!_2}=\phi_{\E^!_1}+\phi_{\E^!_3}
 \end{align}
for an exact triangle $\E_1\to \E_2\to\E_3\to\E_1[1]$.


\subsection{Hochschild homology for admissible subcategories}
We define Hochschild homology for admissible subcategories and construct objects of $\rep_\Z(\C)$.
Similar construction has already been considered by many people
(e.g., \cite{calmuk1}, \cite{kuzhoc}, \cite{polche}, \cite{shkhir}).

For a functor $F$, we denote by $F^*$ (resp. $F^!$) the left (resp. right) adjoint functor.
Let $\A$ be a full triangulated subcategory of $D^b(X)$ and $i_\A:\A \into D^b(X)$ be the inclusion functor. 
$\A$ is called admissible if $i_\A$ has left and right adjoint functors.
Note that an admissible subcategory $\A$ is saturated, and hence all fully faithful functors to triangulated categories of finite type are also admissible (see \cite[\S 2]{bonrep}).
For a functor $F$ to $\A$, the functor $i_\A \circ F$ is denoted by $\widetilde{F}.$ 
For an admissible subcategory $\A$, we define left and right orthogonal to $\A$ by  
    \begin{align*}
         \lperp\A&:=\{A'\in D^b(X)\mid\forall A \in \A \ \ \Hom (A',A)=0\}, \\
         \A^\perp&:=\{A'\in D^b(X)\mid\forall A \in \A \ \ \Hom (A,A')=0 \}.
    \end{align*}
Then $\lperp\A$ and $\A^\perp$ are also admissible (see \cite[Proposition 3.6]{bonrep}).
Moreover we have semiorthogonal decompositions
\[D^b(X)\cong  \langle \A, \lperp\A\rangle \cong \langle \A^\perp,\A\rangle.\]
See Definition \ref{4.12} for the definition of semiorthogonal decompositions.
We denote by $L_\A$ (resp. $R_\A$) the projection functor from $D^b(X)$ to $\A$ with respect to the semiorthogonal decomposition
$\langle \A,\lperp\A\rangle$ (resp. $\langle \A^\perp,\A\rangle$).

\begin{lemma}
  $i^*_\A \cong L_\A$ and $i^!_\A \cong R_\A.$
\end{lemma}
\begin{proof}
     For $E \in D^b(X)$, we have an exact triangle 
     \[ \widetilde{R}_{\lperp \A} E \to E \to \widetilde{L}_\A E \to \widetilde{R}_{\lperp \A} E[1].\] 
     Since $\Hom_{D^b(X)}(\widetilde{R}_{\lperp \A} E, A)=0$ for $A \in \A$,
     by applying $\Hom_{D^b(X)}(-, A)$ to the above exact triangle, 
     we have a functorial isomorphism $\Hom_{\A}(L_\A E, A)\cong \Hom_{D^b(X)}(E, A).$
     This implies $i_\A^*\cong L_\A$.
     The proof of $i^!_\A \cong R_\A$ is similar.
\end{proof}

To define $\phi_{\widetilde{L}_\A}$ and $\phi_{\widetilde{R}_\A}$,
we need the following lemma:

\begin{lemma}[\cite{kuzhoc}]
  $\widetilde{L}_\A$ and $\widetilde{R}_\A$ are Fourier-Mukai functors.
\end{lemma}
\begin{proof}
Apply \cite[Theorem 3.7]{kuzhoc} for semiorthogonal decompositions
$\langle \A, \lperp\A\rangle$ and $\langle \A^\perp, \A\rangle$. 
\end{proof}

Using the above lemma, we define the Hochschild homology of $\A$ as follows:
\begin{definition}
 We define the Hochschild homology $\HH{\A}$ of $\A$ by $\ima \phi_{\widetilde{R}_\A} \subset \HH{X}.$
\end{definition}


Let $\B\subset D^b(Y)$ be an admissible subcategory. 
A functor $F:\A\to \B$ is called a Fourier-Mukai functor
if there exists $\E \in D^b(X\times Y)$ such that  $\Phi_\E|_\A \cong \widetilde{F}.$
The complex $\E$ is called a Fourier-Mukai kernel of $F$.
To define $\phi_F$ for a Fourier-Mukai functor $F$, we need some lemmas.

\begin{lemma}\label{inclusion}
  Let $\B \subset D^b(Y)$ be an admissible subcategory
  and $\E \in D^b(X\times Y)$ be a bounded complex. 
  If $\Phi_\E(\A) \subset \B$, then $\phi_{\E}(\HH{\A}) \subset \HH{\B}$.
\end{lemma}
\begin{proof}
  By assumption, we have 
  \[\Phi_\E \circ \widetilde{R}_\A \cong \widetilde{R}_\B \circ \Phi_\E \circ \widetilde{R}_\A.\]
  Hence we have 
  \[ \ima (\phi_{\E} \circ \phi_{\widetilde{R}_\A}) \subset \ima \phi_{\widetilde{R}_\B}, \]
which implies the lemma.
\end{proof}

\begin{lemma}\label{imeq}
  If $\Phi_{\E_1}|_\A\cong \Phi_{\E_2}|_\A,$ then $\phi_{\E_1}|_{\HH{\A}}=\phi_{\E_2}|_{\HH{\A}}.$
\end{lemma}
\begin{proof}
   By assumption, we have $\Phi_{\E_1} \circ \widetilde{R}_\A \cong \Phi_{\E_2} \circ \widetilde{R}_\A.$
   This implies the lemma.
\end{proof}

For a Fourier-Mukai functor $F:\A \to \B$ with a Fourier-Mukai kernel $\E,$ 
we define $\phi_F$ by $\phi_{\E}|_{\HH{\A}}.$
By Lemma \ref{inclusion}, we can consider $\phi_F$ as a morphism from $\HH{\A}$ to $\HH{\B}.$ 
By Lemma \ref{imeq}, this is independent of a choice of a Fourier-Mukai kernel.
Note that $\id_\A$ and $i_\A$ are  Fourier-Mukai functors with a Fourier-Mukai kernel $\oo_{\Delta}.$
Hence we have $\phi_{\id_\A}=\id_{\HH{\A}}$
and $\phi_{i_\A}$ is the natural embedding of $\HH{\A}.$
To define $\HH{\A}$, we used the specific projection $\widetilde{R}_\A$.
The next lemma implies that
the definition of $\HH{\A}$ is independent of a choice of a projection.

\begin{lemma}\label{proj}
Let $P:D^b(X)\to\A$ be a Fourier-Mukai functor which satisfies $P|_\A\cong \id_\A.$
Then $\ima \phi_P=\HH{\A}.$
\end{lemma}
\begin{proof}
By assumption, we have $P \circ \widetilde{R}_\A \cong R_\A$ and $ R_\A \circ \widetilde{P} \cong P.$
Hence we have 
$\im \phi_{R_\A} \subset \im \phi_P$ and $ \im \phi_P \subset \im \phi_{R_\A}.$ 
\end{proof}

We construct an object of $\rep_\Z(\C)$ from $\A$ and the Serre functor.
We recall that for $E \in D^b(X),$ the functor$-\otimes E$
is a Fourier-Mukai functor with a Fourier-Mukai kernel $\Delta_*E$.
Hence the Serre functor $S_X\cong -\otimes \omega_X[d_X]$ of $D^b(X)$ is a Fourier-Mukai functor. 

\begin{lemma}\label{serre}
$R_\A \circ S_X \circ i_\A$ is the Serre functor $S_\A$ of $\A$.
Especially, $S_\A$ is a Fourier-Mukai functor.
\end{lemma}
\begin{proof}
Let $A_1, A_2 \in \A$. 
This lemma is proved by the following functorial isomorphisms:
\[\Hom_\A(A_1, A_2) \cong \Hom_{D^b(X)}(A_2, S_X(A_1))^\vee \cong
   \Hom_\A (A_2, R_\A\circ S_X (A_1))^\vee. \qedhere\]
\end{proof}

Let $F:\A \to \B$ be a Fourier-Mukai functor with a Fourier-Mukai kernel $\E$.
Then we have $F \cong R_\B \circ \Phi_\E \circ i_\A$, which implies the left adjoint
$F^*\cong L_\A \circ \Phi_{\E^*} \circ i_\B$ is a Fourier-Mukai functor. 
Hence, if $F$ is an isomorphism, the quasi-inverse $F^{-1}\cong F^*$ is a Fourier-Mukai functor and
$\phi_F$ is an isomorphism with the inverse $\phi_{F^{-1}}.$
Set
\[T_\A:=(-1)^\Deg \phi_{S^{-1}_\A},\]
where $(-1)^\Deg$ is the sign operator defined by $(-1)^k \cdot \id$ on $\mathrm{HH}_k(X).$
The above argument shows that the pair $(\HH{\A}, T_\A)$ is an element of 
$\rep_\Z(\C)$.
Moreover, for an isomorphism $F$, we have 
$F\circ S_\A\cong S_\B\circ F$ (see, e.g., \cite[Lemma 1.30]{huybook}), which implies 
\[\phi_F\circ T_\A=T_\B\circ \phi_F.\]
Thus $\phi_F$ is a morphism in $\rep_\Z(\C)$.
For simplicity of notation, we write $T_X$ instead of $T_{D^b(X)}$.
Then we have $(\HH{X}, T_X) \in \rep_\Z(\C).$


\subsection{Stokes data from semiorthogonal decompositions}
In this section we construct Stokes data from (framed) semiorthogonal decompositions.
We first recall the definition of semiorthogonal decompositions.
\begin{definition}\label{4.12}
Let $\mathcal{T}$ be a triangulated category.
A sequence of full triangulated subcategories $\mathcal{A}_1, \mathcal{A}_2, \dots, \mathcal{A}_m$ is called a semiorthogonal decomposition of $\mathcal{T}$
if $\mathcal{A}_i \subset \mathcal{A}_j^\perp$ for $i<j$ and for every $T \in \mathcal{T}$ there exists the following sequence of exact triangles:
\[
  \xymatrix@C=.5em
  {0 \ar@{=}[r] &T_m \ar[rrrr] &&&& T_{m-1}\ar[rrrr] \ar[lld]
   &&&& T_{m-2} \ar[rr] \ar[lld] && \cdots \ar[rr] && T_1\ar[rrrr]
   &&&&  T_0 \ar[lld] \ar@{=}[r] & T  \\
    &&& A_m \ar@{.>}[llu] &&&& A_{m-1} \ar@{.>}[llu]
   &&&&&&&& A_1 \ar@{.>}[llu]}.
\]
Here $A_i$ are objects of $\mathcal{A}_i$. 
\end{definition}
Let $\is$ be a finite set and $\tau:\is\simeqto\{1,2,\dots,m\}$ be a bijection. 
Let $\fra{X}{\is}$ be a family of admissible subcategories in $D^b(X).$ 

\begin{definition}
A pair
$\frs{\is}$ is called a framed semiorthogonal decomposition of type $(\is, \tau)$
with the frame $\fra{X}{\is}$
if $\{\A_i\}_{1\le i \le m}$ is a semiorthogonal decomposition
$D^b(X)=\langle\A_1,\A_2,\dots,\A_m\rangle$
and $\{F_\idx\}_{\idx\in\is}$ is a tuple of Fourier-Mukai isomorphisms
$F_c:\gr_\idx D^b(X)\simeqto \A_{\tau(c)}.$ 
\end{definition}

We denote by $P_i$ the projection functor from $D^b(X)$ to $\A_i$ with respect to the semiorthogonal decomposition $\{\A_i\}_{1\le i \le m}$.  
Then we have
\[P_i\circ \widetilde{P}_j \cong \begin{cases} P_i \quad &(i=j) \\ 0 \quad &(i\neq j) \end{cases}. \]


The next theorem of Kuznetsov is essential for our construction.
\begin{theorem}[{\cite[Theorem 3.7]{kuzhoc}}]\label{kuznetsov}
Every $P_i$ is a Fourier-Mukai functor.
Moreover, we have the following sequence of exact triangles in $D^b(X \times X)$:
\[
  \xymatrix@C=.5em
  {0 \ar@{=}[r] &\mathcal{D}_m \ar[rrrr] &&&& \mathcal{D}_{m-1}\ar[rrrr] \ar[lld]
   &&&&\mathcal{D}_{m-2} \ar[rr] \ar[lld] && \cdots \ar[rr] &&\mathcal{D}_1\ar[rrrr]
   &&&& \mathcal{D}_0 \ar[lld] \ar@{=}[r] &\oo_\Delta  \\
    &&&\mathcal{P}_m \ar@{.>}[llu] &&&&\mathcal{P}_{m-1} \ar@{.>}[llu]
   &&&&&&&& \mathcal{P}_1 \ar@{.>}[llu],}
\]
where $\mathcal{P}_i$ is a Fourier-Mukai kernel of $P_i$.
\end{theorem}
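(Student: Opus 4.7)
The plan is to proceed by induction on the length $m$ of the semiorthogonal decomposition. The base case $m=1$ is immediate: $P_1 = \id$ is a Fourier-Mukai functor with kernel $\oo_\Delta$, and no nontrivial triangles are required. For the inductive step, I would introduce the intermediate subcategories $\B_i := \langle \A_i, \A_{i+1}, \dots, \A_m\rangle \subset D^b(X)$ (with $\B_{m+1} := 0$), each of which is admissible in $D^b(X)$ because admissibility is preserved under gluing of semiorthogonal decompositions (cf.\ \cite{bonrep}). Internally, each $\B_i$ carries the two-step decomposition $\B_i = \langle \A_i, \B_{i+1}\rangle$, so that projection onto the first factor within $\B_i$ reproduces the original projection $P_i$.

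The crucial technical step is a representability lemma: for every admissible subcategory $\A \subset D^b(X)$, the projection endofunctor $\widetilde{R}_\A := i_\A \circ R_\A$ of $D^b(X)$ is a Fourier-Mukai functor. I would establish this via Orlov's representability theorem in its refined form (due to Bondal--Van den Bergh and Lunts--Orlov): since $X$ is smooth projective, any exact endofunctor of $D^b(X)$ of bounded cohomological amplitude is represented by a kernel in $D^b(X \times X)$. Boundedness of the cohomological amplitude of $\widetilde{R}_\A$ follows from the saturatedness of admissible subcategories of $D^b(X)$. Granting this, set $\mathcal{D}_{i-1}$ to be a Fourier-Mukai kernel of $\widetilde{R}_{\B_i}$ (so $\mathcal{D}_0 = \oo_\Delta$ and $\mathcal{D}_m = 0$), and let $\mathcal{P}_i$ be a Fourier-Mukai kernel of $\widetilde{P}_i$; this already proves the first assertion of the theorem.

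To construct the exact triangles $\mathcal{D}_i \to \mathcal{D}_{i-1} \to \mathcal{P}_i$, apply the internal semiorthogonal decomposition $\B_i = \langle \A_i, \B_{i+1}\rangle$ to the object $\widetilde{R}_{\B_i}(E) \in \B_i$ for each $E \in D^b(X)$. This yields a functorial exact triangle
\[\widetilde{R}_{\B_{i+1}}(E) \to \widetilde{R}_{\B_i}(E) \to \widetilde{P}_i(E) \to \widetilde{R}_{\B_{i+1}}(E)[1],\]
exhibiting $\widetilde{R}_{\B_i}$ as an extension of $\widetilde{P}_i$ by $\widetilde{R}_{\B_{i+1}}$. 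The desired triangle in $D^b(X \times X)$ is then obtained by lifting this natural transformation of Fourier-Mukai functors to a morphism of kernels and completing it to a triangle.

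The main obstacle is precisely this lifting step: while the isomorphism class of a Fourier-Mukai kernel on smooth projective varieties is uniquely determined by the associated functor (by Lemma \ref{cs}), morphisms between kernels are \emph{not} uniquely determined by their induced natural transformations, so lifting a triangle of functors to a triangle of kernels is not automatic. Kuznetsov's argument in \cite{kuzhoc} circumvents this by constructing the $\mathcal{D}_i$ and $\mathcal{P}_i$ directly through mutation operations applied to $\oo_\Delta$, thereby producing the triangles at the level of kernels by construction rather than as a posteriori lifts; any complete proof along the lines sketched above must ultimately do similar bookkeeping.
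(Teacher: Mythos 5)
You are trying to reprove a theorem that the paper itself does not prove: it is quoted verbatim from Kuznetsov \cite{kuzhoc} and used as a black box. Measured against Kuznetsov's actual argument, your sketch has a step that fails, not merely a step left to bookkeeping. The representability lemma you invoke — that any exact endofunctor of $D^b(X)$ of bounded cohomological amplitude is represented by a kernel on $X\times X$ — is not a theorem. Orlov's representability theorem and its refinement by Lunts--Orlov apply to fully faithful functors (admitting adjoints), and the projection $\widetilde{R}_{\A}$ is very far from fully faithful as an endofunctor of $D^b(X)$; moreover Rizzardo and Van den Bergh have constructed exact functors between derived categories of smooth projective varieties that are not Fourier--Mukai, so no general representability statement can be cited. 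Saturatedness of $\A$ gives admissibility and boundedness, but boundedness plus exactness does not give a kernel. Consequently your argument does not establish even the first assertion of the theorem (that each $P_i$ is a Fourier--Mukai functor); this is precisely the nontrivial content of Kuznetsov's result, not a formality one can outsource to Orlov.

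The missing idea, which simultaneously repairs this and the kernel-lifting problem you correctly flag at the end, is to transfer the semiorthogonal decomposition to the product rather than work functor-by-functor on $X$: one has an induced semiorthogonal decomposition $D^b(X\times X)=\langle \A_1\boxtimes D^b(X),\dots,\A_m\boxtimes D^b(X)\rangle$ (Kuznetsov's base-change result), and one defines $\mathcal{D}_{i-1}$ as the projection of $\oo_\Delta$ onto the subcategory generated by the $i$-th through $m$-th components, with $\mathcal{P}_i$ the associated graded piece. One then verifies that the Fourier--Mukai functor with kernel $\mathcal{D}_{i-1}$ is exactly the projection onto $\langle\A_i,\dots,\A_m\rangle$, so representability of the $P_i$ comes for free, and the triangles $\mathcal{D}_i\to\mathcal{D}_{i-1}\to\mathcal{P}_i\to\mathcal{D}_i[1]$ are literally the decomposition triangles of the single object $\oo_\Delta$ inside $D^b(X\times X)$ — no lifting of natural transformations or of triangles of functors to the kernel level is ever needed. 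Your closing paragraph gestures toward this ("mutations applied to $\oo_\Delta$"), but as written the proposal proves neither the representability of the $P_i$ nor the existence of the kernel-level triangles, so the induction never gets off the ground.
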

Since $P_i$ is a projection to $\A_i$, using Lemma \ref{proj}, we have 
$\ima \phi_{P_i}=\HH{\A_i}$.
Hence, as a corollary of Theorem \ref{kuznetsov}, we have the following:

\begin{corollary}\label{sum}
$\HH{X}=\bigoplus^m_{i=1}\HH{\A_i}$.
\end{corollary}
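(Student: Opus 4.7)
The plan is to deduce the decomposition directly from Kuznetsov's filtration of $\oo_\Delta$ (Theorem \ref{kuznetsov}), using additivity of $\phi$ on exact triangles together with mutual orthogonality of the projection functors.

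First I would apply $\phi$ to each exact triangle
\[
\mathcal{D}_i \to \mathcal{D}_{i-1} \to \mathcal{P}_i \to \mathcal{D}_i[1]
\]
in Theorem \ref{kuznetsov}. Additivity of Chern characters (the identity $\phi_{\E_2}=\phi_{\E_1}+\phi_{\E_3}$ noted just before (\ref{additive})) gives $\phi_{\mathcal{D}_{i-1}} = \phi_{\mathcal{D}_i} + \phi_{\mathcal{P}_i}$. Telescoping from $\mathcal{D}_0 = \oo_\Delta$ down to $\mathcal{D}_m = 0$, and using $\phi_{\oo_\Delta} = \id_{\HH{X}}$, yields
\[
\id_{\HH{X}} \;=\; \sum_{i=1}^m \phi_{\mathcal{P}_i} \;=\; \sum_{i=1}^m \phi_{\widetilde{P}_i}.
\]
Applying both sides to any $\alpha \in \HH{X}$ and invoking Lemma \ref{proj} (with $P=P_i$, noting that $P_i|_{\A_i}\cong\id_{\A_i}$ by definition of the projection in a semiorthogonal decomposition) to identify $\im \phi_{\widetilde{P}_i}$ with $\HH{\A_i}\subset \HH{X}$ already gives the sum $\HH{X} = \sum_{i=1}^m \HH{\A_i}$.

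To upgrade this to a direct sum, I would check that $\{\phi_{\widetilde{P}_i}\}_{i=1}^m$ are mutually orthogonal idempotents on $\HH{X}$. The relations $P_i \circ \widetilde{P}_j \cong P_i$ for $i = j$ and $P_i \circ \widetilde{P}_j \cong 0$ for $i \neq j$ (stated just before Theorem \ref{kuznetsov}) imply, after composing on the left with $i_{\A_i}$, that $\widetilde{P}_i \circ \widetilde{P}_j \cong \delta_{ij}\,\widetilde{P}_i$ as Fourier-Mukai functors. Since $\phi$ is compatible with composition ($\phi_{\F}\circ\phi_{\E}=\phi_{\F\circ\E}$), it follows that $\phi_{\widetilde{P}_i} \circ \phi_{\widetilde{P}_j} = \delta_{ij}\,\phi_{\widetilde{P}_i}$. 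Combined with the completeness relation above, elementary linear algebra then produces the direct sum $\HH{X} = \bigoplus_{i=1}^m \im \phi_{\widetilde{P}_i} = \bigoplus_{i=1}^m \HH{\A_i}$.

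All the serious content is packaged inside Kuznetsov's Theorem \ref{kuznetsov}; everything else in the argument is formal. The only bookkeeping point that requires care is the identification $\im \phi_{\widetilde{P}_i} = \HH{\A_i}$ as subspaces of $\HH{X}$, which is delivered by Lemma \ref{proj} together with the fact that $\widetilde{P}_i = i_{\A_i}\circ P_i$ factors through the defining inclusion of $\HH{\A_i}$.
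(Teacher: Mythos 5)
Your argument is correct and is essentially the one the paper intends: the corollary is stated with a \qed precisely because telescoping $\phi$ over the exact triangles of Theorem \ref{kuznetsov} gives $\id_{\HH{X}}=\sum_i\phi_{\mathcal{P}_i}$, and Lemma \ref{proj} together with the relations $P_i\circ\widetilde{P}_j\cong\delta_{ij}P_i$ (via Lemma \ref{cs}) identifies the images and makes the $\phi_{\widetilde{P}_i}$ orthogonal idempotents. Your write-up just makes explicit the bookkeeping the paper leaves implicit, so there is nothing to add.
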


Let $(\{\A_i\}_{1\le i \le m}, \{F_\idx\}_{\idx\in\is})$ be a framed semiorthogonal decomposition
of type $(\is, \tau)$ with a frame $\fra{X}{\is}$.
Then
\[\phi_{F_c} : \left( \HH{\gr_\idx D^b (X)}, T_{\gr_\idx D^b(X)} \right)
  \simeqto \left(\HH{\A_{\tau(c)}}, T_{\A_{\tau(c)}}\right)\]
is an isomorphism in $\rep_\Z(\C).$
Set
\begin{alignat*}{4}
\lub f_c&:=\phi_{i_{\A_{\tau(c)}}} \circ \phi_{F_c} &:\HH{\gr_\idx D^b(X)}
\hookrightarrow \HH{X},
\quad &\lub f&:=\coprod_{\idx \in \is} \lub f_c, \\
\lub f_c^*&:=\phi_{F_c}^{-1}\circ \phi_{L_{\A_{\tau(c)}}} &: \HH{X} \twoheadrightarrow
\HH{\gr_\idx D^b(X)},
\quad &\lub f^*&:=\prod_{\idx \in \is} \lub f^*_c, \\
\lub f_c^!&:=\phi_{F_c}^{-1} \circ \phi_{R_{\A_{\tau(c)}}} &: \HH{X} \twoheadrightarrow
\HH{\gr_\idx D^b(X)},
\quad &\lub f^!&:=\prod_{\idx \in \is} \lub f^!_c.
\end{alignat*}
Since $R_{\A_i}\cong i^!_{\A_i}\cong S_{\A_i}\circ i^*_{\A_i} \circ S^{-1}_X,$ 
we have $\lub f^!_c=T^{-1}_c\circ \lub f_c^*\circ T_X.$

\begin{theorem}
$\left(\left(\HH{\gr_\idx D^b(X)}, T_{\gr_\idx D^b(X)}\right)_{\idx \in \is},
\lub f, \lub f^*\right)$
defines Stokes data on $\left( \HH{X}, T_X \right)$ of type $(\is, \tau).$
\end{theorem}
\begin{proof}
By Corollary \ref{sum}, $\lub f$ is an isomorphism.
By the definition of semiorthogonal decomposition, we have $\A_j \subset {}^\perp\!\A_i$ for $i<j$.
Hence we have
\[L_{\A_i}\circ i_{\A_j} \cong
\begin{cases}
\id_{\A_j} & (i=j) \\
0 & (i<j).
\end{cases}
\]
This implies  
\[ \lub f^*_c \circ \lub f_{c'}=
\begin{cases}
\id_{V_{c'}} & (c=c')\\
0 & (\tau(c)<\tau(c')).
\end{cases}
\]  
Similarly, we have 
\[ \lub f^!_c \circ \lub f_{c'}=
\begin{cases}
\id_{V_{c'}} & (c=c')\\
0 & (\tau(c)>\tau(c')),
\end{cases}
\] 
which proves the theorem.                            
\end{proof}

\begin{definition}
Let $\frs{\is}$ be a framed semiorthogonal decomposition of type $(\is, \tau)$ with a frame $\fra{X}{\is}$.
We define Stokes data
\[\mathfrak{B} \big( \frs{\is} \big) \]
on $\big(\HH{X}, T_X \big)$ of type $(\is, \tau)$ by the tuple 
\[\left( \left( \HH{\gr_\idx D^b (X)}, T_{\gr_\idx D^b(X)} \right)_{\idx \in \is},
\lub f, \lub f^*\right).\]
\end{definition}


\subsection{Mutations of framed semiorthogonal decompositions}
In this section, we define mutations of framed semiorthogonal decompositions.
Essentially, this construction is due to \cite{bonrep} (see also \cite{kuzhoc}).
  
Let $\frs{\is}$ be a framed semiorthogonal decomposition of type $(\is, \tau)$ with a frame $\fra{X}{\is}$.
\begin{definition}[{cf. \cite[\S 2.4]{kuzhoc}}]
For each $i\in\{1,2,\dots, m-1\}$, we define  
\begin{align*}
\rms_j&:=\begin{cases}
                                \A_j &(j\neq i, i+1) \\
                                \A_{i+1} & (j=i)        \\
                                \lperp \langle \A_1, \A_2,\dots, \A_{i-1}, \A_{i+1} \rangle 
                                \cap \langle \A_{i+2},\A_{i+3},\dots,\A_m\rangle^\perp & (j=i+1)
                            \end{cases} \\
(\R_{i+1} F_\bullet)_\idx&:= \begin{cases}
                             F_c & (\tau(\idx) \neq i) \\
                             R_{\lperp \A_{i+1}} \circ F_\idx & (\tau(\idx)=i)
                             \end{cases} 
\end{align*}
The pair $\rfrs$ is called the right mutation of $\frs{\is}$.
Similarly, we define the left mutation $\lfrs$ as follows:
\begin{align*}
\lms_j&:=\begin{cases}
                                          \A_j &(j\neq i, i+1) \\
                                          \A_i & (j=i+1)        \\
                                          \lperp\langle\A_1, \A_2,\dots, \A_{i-1}\rangle\cap\langle\A_{i},\A_{i+2},
                                          \A_{i+3}, \dots,\A_m\rangle^\perp & (j=i)
                            \end{cases} \\
(\mathbb{L}_i F_\bullet)_\idx&:= \begin{cases}
                                           F_\idx & (\tau(\idx)\neq i+1)\\
                                           L_{\A_i^\perp}\circ F_\idx & (\tau(\idx)=i+1)
                                           \end{cases}
\end{align*}
\end{definition}

To prove the propositions below (Proposition \ref{mutfs}, \ref{braid rel}), we need the following:

\begin{lemma}[\cite{bonrep}]\label{bk}
$R_{\lperp \A_{i+1}}|_{\A^\perp_{i+1}}:\A^\perp_{i+1} \to \lperp \A_{i+1}$ is an isomorphism and 
the quasi-inverse is given by
$L_{\A^\perp_{i+1}}|_{\lperp \A_{i+1}}.$
\end{lemma}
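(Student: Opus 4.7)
The plan is to establish the lemma by showing that the two composites
\[L_{\A^\perp_{i+1}}\circ R_{\lperp \A_{i+1}}\big|_{\A^\perp_{i+1}}\quad\text{and}\quad R_{\lperp \A_{i+1}}\circ L_{\A^\perp_{i+1}}\big|_{\lperp \A_{i+1}}\]
are isomorphic to the identity functors on $\A^\perp_{i+1}$ and $\lperp \A_{i+1}$ respectively. The key point is that $\A_{i+1}$ plays the role of a common complementary factor in the two decompositions $D^b(X)=\langle\A_{i+1},\lperp\A_{i+1}\rangle$ and $D^b(X)=\langle\A^\perp_{i+1},\A_{i+1}\rangle$, and each projection annihilates its complementary factor.

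First I would take $E\in\A^\perp_{i+1}$ and apply the exact functor $L_{\A^\perp_{i+1}}$ to the defining triangle
\[\widetilde{R}_{\lperp\A_{i+1}}E\to E\to \widetilde{L}_{\A_{i+1}}E\to \widetilde{R}_{\lperp\A_{i+1}}E[1]\]
coming from $D^b(X)=\langle\A_{i+1},\lperp\A_{i+1}\rangle$, in which $\widetilde{L}_{\A_{i+1}}E\in\A_{i+1}$. Since $L_{\A^\perp_{i+1}}$ restricts to the identity on $\A^\perp_{i+1}$ (projection fixes its target factor) and vanishes on $\A_{i+1}$ (projection kills the complementary factor in $\langle\A^\perp_{i+1},\A_{i+1}\rangle$), the third term becomes zero and the second term is $E$, yielding $L_{\A^\perp_{i+1}}R_{\lperp\A_{i+1}}E\simeq E$.

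The reverse composition is handled by the symmetric argument: for $B\in\lperp\A_{i+1}$, apply $R_{\lperp\A_{i+1}}$ to the triangle
\[\widetilde{R}_{\A_{i+1}}B\to B\to \widetilde{L}_{\A^\perp_{i+1}}B\to \widetilde{R}_{\A_{i+1}}B[1]\]
from $D^b(X)=\langle\A^\perp_{i+1},\A_{i+1}\rangle$. Using $R_{\lperp\A_{i+1}}|_{\lperp\A_{i+1}}\simeq \id$ and $R_{\lperp\A_{i+1}}|_{\A_{i+1}}\simeq 0$, the triangle collapses to give $R_{\lperp\A_{i+1}}L_{\A^\perp_{i+1}}B\simeq B$. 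Putting these together, the restrictions of the two projections to the relevant orthogonal subcategories are mutually quasi-inverse.

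There is no genuine obstacle here; the only care required is to keep track of which projection comes from which decomposition and to verify cleanly the two annihilation statements $L_{\A^\perp_{i+1}}|_{\A_{i+1}}\simeq 0$ and $R_{\lperp\A_{i+1}}|_{\A_{i+1}}\simeq 0$. Each of these is immediate once one writes the defining triangle of the projection for an object in the factor being killed: one of the two terms becomes the object itself and the other is zero by the semiorthogonality that defines the decomposition. This reduces the proof to unpacking the triangles above.
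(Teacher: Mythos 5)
Your argument is correct: the two triangle-collapse computations, together with the vanishing statements $L_{\A^\perp_{i+1}}|_{\A_{i+1}}\simeq 0$ and $R_{\lperp\A_{i+1}}|_{\A_{i+1}}\simeq 0$ (both immediate from adjunction and the definitions of $\A^\perp_{i+1}$, $\lperp\A_{i+1}$), do show that the two restricted projections are mutually quasi-inverse. The paper itself gives no argument here but simply refers to Bondal's Lemma 1.9, and your proof is essentially the standard one behind that citation, so you have in effect supplied the delegated details rather than found a different route.
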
 
\begin{proof}
See the proof of \cite[Lemma 1.9]{bonrep}. 
\end{proof}


\begin{proposition}\label{mutfs}
  \sloppy $\rfrs$ and $\lfrs$ are framed semiorthogonal decompositions of type $(\is, s_i\circ\tau)$
with the frame $\fra{X}{\is}.$
\end{proposition}
\begin{proof}
We only prove that $\rfrs$ is a framed semiorthogonal decomposition.
The proof for the left mutation is similar.
By definition, $\{\rms_j\}_{1\le j \le m}$ is a semiorthogonal decomposition of $D^b(X).$
Thus it is sufficient to show that
$(\R_{i+1} F_\bullet)_{\idx}$   
is an isomorphism between $\gr_\idx D^b(X)$ and $\rms_{s_i \circ \tau(\idx)}$.
We only consider the case $\idx=\tau^{-1}(i)$ since the case $\idx \neq \tau^{-1}(i)$ is obvious. 
For $A_i\in\A_i$, we have the following exact triangle:
\[R_{\lperp \A_{i+1}}(A_i) \to A_i \to L_{\A_{i+1}}(A_i) \to R_{\lperp\A_{i+1}}(A_i)[1]. \]
This implies 
\[R_{\lperp \A_{i+1}}(A_i) \in (\langle \A_i, \A_{i+1} \rangle \cap \lperp\A_{i+1})
   =(\R_{i+1} \A_\bullet)_{i+1}.\]  
From Lemma \ref{bk} and $\A_i \subset \A^\perp_{i+1},$
it follows that $R_{\lperp\A_{i+1}}|_{\A_i}$ is fully faithful.

It remains to show that 
$R_{\lperp\A_{i+1}}|_{\A_i}: \A_i \to (\R_{i+1} \A_\bullet)_{i+1}$ is essentially surjective.
Since $\A_i$ is saturated, $\im R_{\lperp\A_{i+1}}|_{\A_i}\subset \rms_{i+1}$ is admissible.
Hence it is sufficient to show that
\[\rms_{i+1} \cap {}^\perp (\im R_{\lperp\A_{i+1}}|_{\A_i})\cong0.\] 
Choose $A \in \rms_{i+1} \cap {}^\perp (\im R_{\lperp\A_{i+1}}|_{\A_i}).$ 
Then, for all $B \in \A_i,$ we have 
\[\Hom_{D^b(X)}(A, B)=\Hom_{\lperp \A_{i+1}}(A, R_{\lperp\A_{i+1}}B)=0.\] 
Thus we have $A \in \lperp \A_i \cap \rms_{i+1} \cong 0,$
which completes the proof.
\end{proof}


We construct an action of the braid group $\mathrm{Br}_m$ on the set of framed semiorthogonal decompositions.
Set
\begin{align*}
\sigma_i \frs{\is} &:=\rfrs, \\
\sigma^{-1}_i \frs{\is} &:=\lfrs.
\end{align*}


\begin{proposition}\label{braid rel}
\
    \begin{enumerate}
      \item $\sigma_i^{-1} \sigma_i=\sigma_i\sigma^{-1}_i=\id.$
      \item $\sigma_i\sigma_{i+1}\sigma_i=\sigma_{i+1}\sigma_i\sigma_{i+1}.$ 
      \item $\sigma_i\sigma_j=\sigma_j\sigma_i \quad(|i-j|\ge 2).$
    \end{enumerate}
In other words, $\sigma_i, \sigma^{-1}_i$ satisfy the braid relations.
\end{proposition}
\begin{proof} 

(1) By definition, we have
\[\big{(}\mathbb{L}_i(\R_{i+1} F_\bullet)_\bullet\big{)}_\idx=
  \begin{cases}
    F_\idx \quad &(\tau(\idx) \neq i) \\
    L_{\A^\perp_{i+1}} \circ R_{\lperp \A_{i+1}} \circ F_\idx \quad &(\tau(\idx)=i).
  \end{cases} 
\]
From Lemma \ref{bk} and $\A_i \subset \A^\perp_{i+1},$ 
we see that 
\[L_{\A^\perp_{i+1}} \circ R_{\lperp \A_{i+1}} \circ F_{\tau^{-1}(i)} \cong F_{\tau^{-1}(i)}.\]
This implies $\sigma_i^{-1} \sigma_i=\id.$
Similarly, we can prove $\sigma_i\sigma^{-1}_i=\id.$ \\
(2) Set $\A_{i+2}':= \im R_{\lperp \A_{i+2}}|_{\A_{i+1}}.$
By definition, we have 
\[
\left( \R_{i+1} \left( \R_{i+2} \left( \R_{i+1} F_\bullet \right)_\bullet \right)_\bullet \right)_\idx=
\begin{cases}
F_\idx \quad & (\tau(\idx) \neq i,i+1) \\
R_{\lperp \A_{i+2}} \circ F_\idx \quad & (\tau(\idx)=i+1)\\
R_{\lperp \A_{i+2}} \circ R_{\lperp \A_{i+1}} \circ F_\idx \quad & (\tau(\idx)=i), 
\end{cases}
\]

\[
\left( \R_{i+2} \left( \R_{i+1} \left( \R_{i+2} F_\bullet \right)_\bullet \right)_\bullet \right)_\idx=
\begin{cases}
F_\idx \quad & (\tau(\idx) \neq i,i+1) \\
R_{\lperp \A_{i+2}} \circ F_\idx \quad & (\tau(\idx)=i+1)\\
R_{\lperp \A_{i+2}'} \circ R_{\lperp \A_{i+2}} \circ F_\idx \quad & (\tau(\idx)=i). 
\end{cases}
\]
Hence it is sufficient to show that
\[R_{\lperp \A_{i+2}} \circ R_{\lperp \A_{i+1}}
   \cong R_{\lperp \A_{i+2}'} \circ R_{\lperp \A_{i+2}}.\]
Choose $A \in D^b(X).$ Then we obtain the following sequences of exact triangles:
\[
\xymatrix@C=3.5pt@R=26pt
{0 \ar[rr] && R_{\lperp \A_{i+2}} \circ R_{\lperp \A_{i+1}} A \ar[rr] \ar[ld]
              && R_{\lperp\A_{i+1}} A \ar[rr] \ar[ld] 
              && A \ar[ld] \\
                & R_{\lperp \A_{i+2}} \circ R_{\lperp \A_{i+1}} A \ar@{.>}[lu]
              && L_{\A_{i+2}} \circ R_{\lperp \A_{i+1}} A \ar@{.>}[lu]
              && L_{\A_{i+1}} A \ar@{.>}[lu]}
\]

\[
\xymatrix@C=3.5pt@R=26pt
{0 \ar[rr] && R_{\lperp \A_{i+2}'} \circ R_{\lperp \A_{i+2}} A \ar[rr] \ar[ld]
              && R_{\lperp\A_{i+2}} A \ar[rr] \ar[ld] 
              && A \ar[ld] \\
                & R_{\lperp \A_{i+2}'} \circ R_{\lperp \A_{i+2}} A \ar@{.>}[lu]
              && L_{\A_{i+2}'} \circ R_{\lperp \A_{i+2}} A \ar@{.>}[lu]
              && L_{\A_{i+2}} A \ar@{.>}[lu]}
\]
From these sequences of exact triangles, we see that
$R_{\lperp \A_{i+2}} \circ R_{\lperp \A_{i+1}}$
is the projection to $\lperp \langle \A_{i+1}, \A_{i+2} \rangle$ and 
$R_{\lperp \A_{i+2}'} \circ R_{\lperp \A_{i+2}}$
is the projection to $\lperp \langle \A_{i+2}, \A_{i+2}' \rangle.$
Since 
\[\langle \A_{i+1}, \A_{i+2} \rangle=\langle \A_{i+2}, \A_{i+2}' \rangle\]
as subcategories of $D^b(X)$,
it follows that
\[R_{\lperp \A_{i+2}} \circ R_{\lperp \A_{i+1}}
   \cong R_{\lperp \A_{i+2}'} \circ R_{\lperp \A_{i+2}}.\] \\
(3) Obvious from definition. 
\end{proof}

\begin{remark}
More precisely, the braid group $\mathrm{Br}_m$ acts on the set of equivalence classes of framed semiorthogonal decompositions with the frame $\fra{X}{\is}$.
Here two framed semiorthogonal decompositions $\frs{\is}$ and
$(\{ \A'_i \}_{1\le i \le m}, \{F'_\idx\}_{\idx \in \is})$ with the frame $\fra{X}{\is}$ are equivalent if and only if 
$\widetilde{F}_\idx \cong \widetilde{F}'_\idx$ for all $\idx \in \is.$
\end{remark}



For $\sigma \in \mathrm{Br}_m$, we can define the framed semiorthogonal decomposition
$\sigma \frs{\is}$ of type $(\is, \bar{\sigma} \circ \tau)$
with the frame $\fra{X}{\is}$ as the composition of mutations.

\begin{theorem}\label{compatible to braid}
  $\mathfrak{B} \big( \sigma \frs{\is} \big) = \mathbb{M}_\sigma \big( \mathfrak{B} \frs{\is} \big).$
\end{theorem}
\begin{proof}
By the braid relations, it is sufficient to show the case $\sigma=\sigma_i$ for some $i$. 
We only show that 
\begin{align*}
\phi_{(\R_{i+1} F_\bullet)_\idx}&=R_{i+1} \circ f_\idx, \\
\phi_{(\R_{i+1} F_\bullet)^*_\idx}&= f^*_\idx \circ R^*_{i+1}.
\end{align*}
for $\idx=\tau^{-1}(i)$ since the rest of the proof is evident.
The first equality is shown by 
\[R_{i+1}=\id_{V_X}-\phi_{\widetilde{L}_{\A_{i+1}}}=\phi_{\widetilde{R}_{\lperp \A_{i+1}}}.\]
We will show the second equality.
Note that, for an admissible subcategory $\A,$ we have
\[(\widetilde{R}_\A)^*=(i_\A\circ R_\A)^* \cong \widetilde{L}_\A.\] 
Since
$(\R_{i+1} F_\bullet)_\idx \cong  \widetilde{R}_{\lperp\A_{i+1}} \circ i_{\A_i} \circ F_\idx$,
we have 
\[(\R_{i+1} F_\bullet)^*_\idx \cong F^{-1}_\idx \circ L_{\A_i} \circ \widetilde{L}_{\lperp\A_{i+1}}.\]
Hence it follows that 
\[\phi_{(\R_{i+1} F_\bullet)^*_\idx} 
            = f^*_\idx \circ \phi_{\widetilde{L}_{\lperp \A_{i+1}}}.\]
Since 
$\widetilde{L}_{\lperp \A_{i+1}}  
   \cong (\widetilde{R}_{\lperp \A_{i+1}})^* 
   \cong  S^{-1}_X \circ \widetilde{R}^!_{\lperp \A_{i+1}} \circ S_X,$
we have
\[\phi_{\widetilde{L}_{\lperp \A_{i+1}}} 
 =T_X \circ \phi_{\widetilde{R}^!_{\lperp \A_{i+1}}} \circ T_X^{-1}.\]
By simple computation, it follows  
\begin{align*}
      \phi_{\widetilde{R}^!_{\lperp \A_{i+1}}}
  &=\id_{V_X}-\phi_{\widetilde{L}^!_{\A_{i+1}}}   \\
  &=\id_{V_X}-\phi_{\widetilde{R}_{\A_{i+1}}}    \\
  &= L_{i+1},
\end{align*}
where in the first line we use (\ref{additive}).
Thus we have
\[\phi_{\widetilde{L}_{\lperp \A_{i+1}}}=T_X \circ L_{i+1} \circ T_X^{-1}=R^*_{i+1},\]
which proves the claim.
\end{proof}


\subsection{Pairings on Hochschild homology and B-mutation systems}
 
In this section, we introduce a slightly modified version of the generalized Mukai pairing and show that
this pairing is compatible with the monodromy $T_X$ and the Stokes data constructed from a framed semiorthogonal decomposition.
\begin{definition}[see also \cite{calmuk1}, \cite{pollef}, \cite{ramrel}, \cite{rammuk}, \cite{shkhir}]
We define a Pairing on $\HH{X}$ by 
\[\bil{\alpha}{\beta}_X:=\frac{1}{(\tat)^{\di_X}}\int_{X}e^{\pi \i \rho_{\!{}_X} }W(\alpha) \cup \beta, \]
where $W(\alpha):=\i^{p+q}\alpha$ for $\alpha \in H^q(X,\Omega^p_X).$
\end{definition}

Since
\[ e^{\pi {\tt i} \mu} e^{-\pi {\tt i} \rho_{\!{}_X}}=e^{\pi {\tt i} \rho_{\!{}_X}} e^{\pi {\tt i} \mu}
  ={\tt i}^{-d_X} e^{\pi {\tt i} \rho_{\!{}_X}} W, \]
we see that $\bil{\cdot}{\cdot}_X=[\cdot, \cdot)_X$.
To compute $T_X$, we show the following lemma. 
\begin{lemma}\label{tensor}
For $E \in D^b(X),$ we have $\phi_{-\otimes E}=-\cup\Ch(E)$.
\end{lemma}
\begin{proof}
Let $\pi_1$ (resp. \!\!$\pi_2$) be the projection from $X \times X$ to the first (resp. \!\!second) factor.

We recall that $-\otimes E$ is a Fourier-Mukai functor with a Fourier-Mukai kernel $\Delta_*E.$ 
We compute $\phi_{-\otimes E}(\alpha).$
By definition, we have 
\[\phi_{-\otimes E}(\alpha)=
  \pi_{2*}\big(\pi^*_1(\alpha) \cup \Ch(\Delta_*E)\sqrt{\Td_{X \times X}}\big).\]
Using the Riemann-Roch theorem and the projection formula, we have 
\begin{align*}
     \Ch(\Delta_*E)\sqrt{\Td_{X \times X}}
   &=\Delta_*(\Ch(E)\Td_X) \cup \sqrt{\Td_{X \times X}}^{-1} \\
   &=\Delta_*\big(\Ch(\E) \Delta^*(\sqrt{\Td_{X \times X}})\big) \cup \sqrt{\Td_{X \times X}}^{-1} \\
   &=\Delta_*(\Ch(E)).
\end{align*}
Thus we have
\begin{align*}
   \phi_{-\otimes E}(\alpha)
   &=\pi_{2*}\big(\pi^*_1(\alpha) \cup \Delta_*(\Ch(E))\big) \\
   &=\pi_{2*} \Delta_*\big(\Delta^*\pi^*_1(\alpha) \cup \Ch(E)\big) \\
   &=\alpha \cup \Ch(E). 
\end{align*} 
\end{proof}

As a special case of $E=\omega^{-1}_X[-d_X]$, we have
$T_X= (-1)^{\di_X} (-1)^{\deg} e^{\tat \rho_{\!{}_X}}.$

To show some statements about $\bil{\cdot}{\cdot}_X$,
we use the following properties of the morphism $W$:
\begin{itemize}
\item $W$ is a ring homomorphism. 
\item For $\alpha, \beta \in \HH{X}$, we have $\int_X(-1)^{\di_X}W(\alpha)\cup \beta = \int_X W(\beta) \cup \alpha$.
\item $W\big(c_1(X)\big)=-c_1(X), W\big(\Ch(E^\vee)\big)=\Ch(E),
          W(\sqrt{\Td_X})=\sqrt{\Td_X}\cup e^{-\pi {\tt i} c_1(X)}$. 
\item For a map $f: X\to Y$, we have $W\circ f^*=f^*\circ W, \ W\circ f_*=(-1)^{d_Y-d_X}f_*\circ W$. 
\end{itemize}

\begin{proposition}\label{compmon}
$\bil{\cdot}{\cdot}_X$ is compatible with $T_X.$
\end{proposition}
\begin{proof} 
We need to show $\bil{T_X\alpha}{\beta}_X=(-1)^{\deg\alpha}\bil{\beta}{\alpha}_X,$ i.e., 
\[\frac{1}{(\tat)^{\di_X}}\int_X\e^{\pi{\tt i}\rho_{\!{}_X}}
   W\big( (-1)^{\deg\alpha-\di_X} e^{\tat \rho_{\!{}_X}} \alpha \big) \cup \beta=
   \frac{(-1)^{\deg\alpha}} {(\tat)^{\di_X}} \int_X \e^{\pi {\tt i} \rho_{\!{}_X}} W(\beta) \cup \alpha \]
for homogeneous elements $\alpha, \beta \in \HH{X}$.
Using the above properties of $W$, this follows from simple computation. 
\end{proof}


The next proposition gives a characterization of $\phi_{\E^*}, \phi_{\E^!}$ in terms of the pairing $\bil{\cdot}{\cdot}_X$.

\begin{proposition}[cf. {\cite[Theorem 8]{calmuk1}}]\label{adjoint}
For $\E \in D^b (X \times Y), \alpha \in \HH{Y}, \beta \in \HH{X},$ we have 
  \begin{enumerate}
    \item $\bil{\alpha}{\phi_{\E}(\beta)}_Y=\bil{\phi_{\E^*}(\alpha)}{\beta}_X,$
    \item $\bil{\phi_{\E}(\beta)}{\alpha}_Y=\bil{\beta}{\phi_{\E^!}(\alpha)}_X.$
  \end{enumerate}
\end{proposition}
\begin{proof}
Set $\widetilde{\pi}_1:=\pi_2 \circ \sigma_{XY}$ and $\widetilde{\pi}_2:=\pi_1 \circ \sigma_{XY}.$
We first show 
\begin{align}\label{1.1}
 (-1)^{\di_Y} W\big(\nu(\E^*)\big) \cup \widetilde{\pi}^*_2 e^{\pi{\tt i}c_1(X)}
   =\sigma^*_{XY}\big(\nu(\E) \cup \pi^*_2 e^{\pi{\tt i}c_1(Y)}\big).
\end{align}
By definition, we have
$\Ch(\E^*)=\sigma^*_{XY}\big(\Ch(\E^\vee) \cup (-1)^{d_Y}\pi^*_2 e^{-2\pi{\tt i}c_1(Y)}\big)$,
which implies
\[W\big(\Ch(\E^*)\big)=\sigma^*_{XY}\big(\Ch(\E) \cup (-1)^{d_Y}\pi^*_2 e^{2\pi{\tt i}c_1(Y)}\big).\]
Hence we have 
\[W\big(\nu(\E^*)\big)
    =W\big(\Ch(\E^*)\big) \cup W(\sqrt{\Td_{Y\times X}})
    =\sigma^*_{XY}\big(\Ch(\E) \cup (-1)^{d_Y} \pi^*_2 e^{2\pi{\tt i}c_1(Y)}
      \cup W(\sqrt{\Td_{X\times Y}})\big),\]
where we use $\sigma^*_{XY} \sqrt{\Td_{X \times Y}}=\sqrt{\Td_{Y \times X}}.$
Using $W(\sqrt{\Td_{X\times Y}})=\sqrt{\Td_{X\times Y}}\cup  e^{-\pi {\tt i} c_1(X\times Y)}$, 
it follows that
\begin{align*}
W\big(\nu(\E^*)\big)&=
   \sigma^*_{XY}\big(\Ch(\E) \cup (-1)^{d_Y}\pi^*_2 e^{\pi{\tt i}c_1(Y)}
   \cup \sqrt{\Td_{X\times Y}}\cup \pi^*_1 e^{-\pi{\tt i}c_1(X)}\big) \\
   &=\sigma^*_{XY}\big(\Ch(\E) \cup \pi^*_2 e^{\pi{\tt i}c_1(Y)}
   \cup \sqrt{\Td_{X\times Y}}\big) \cup (-1)^{d_Y} \widetilde{\pi}^*_2 e^{-\pi{\tt i}c_1(X)},
\end{align*}
which implies the equality (\ref{1.1}).

We next show (1).
By definition, we have 
\begin{align*}
 \bil{\phi_{\E^*}(\alpha)}{\beta}_X
 &=\frac{1}{(\tat)^{\di_X}} \int_X e^{\pi{\tt i}\rho_{\!{}_X}} 
   W\big(\widetilde{\pi}_{2*}(\widetilde{\pi}^*_1\alpha \cup \nu(\E^*))\big)\cup \beta \\
 &=\frac{(-1)^{\di_Y}}{(\tat)^{\di_X}} \int_X \widetilde{\pi}_{2*}\big(\widetilde{\pi}^*_1
    W(\alpha) \cup W(\nu(\E^*))\big)\cup e^{\pi{\tt i}\rho_{\!{}_X}}\beta \\
 &=\frac{(-1)^{\di_Y}}{(\tat)^{\di_{X \times Y}}} \int_{Y \times X} \widetilde{\pi}^*_1
        W(\alpha) \cup W\big(\nu(\E^*)\big) \cup \widetilde{\pi}_2^*(e^{\pi{\tt i}\rho_{\!{}_X}}\beta),
\end{align*}
where in the third line we use the projection formula.
Using the equality (\ref{1.1}) and the projection formula, we have
\begin{align*}
 \bil{\phi_{\E^*}(\alpha)}{\beta}_X
 &=\frac{1}{(\tat)^{\di_{X \times Y}}} \int_{Y \times X}\sigma^*_{XY}
     \big(\pi^*_2 W(\alpha) \cup \nu(\E) \cup \pi^*_2 e^{\pi{\tt i}c_1(Y)} \cup \pi^*_1\beta\big) \\
 &=\frac{1}{(\tat)^{\di_{X \times Y}}} \int_{X \times Y}
     \pi^*_2\big(e^{\pi{\tt i} \rho_{\!{}_Y}} W(\alpha)\big) \cup \pi^*_1 \beta \cup \nu(\E) \\
 &=\frac{1}{(\tat)^{\di_Y}} \int_Y e^{\pi{\tt i}\rho_{\!{}_Y}}
        W(\alpha) \cup \pi_{2*}\big(\pi^*_1 \beta \cup \nu(\E)\big). 
\end{align*}

This proves (1).

Finally, we show (2). 
Since $\Phi_{\E^!} \cong S_X \circ \Phi_{\E^*} \circ S^{-1}_Y,$ 
we see that $\phi_{\E^!}=T_X^{-1} \circ \phi_{\E^*} \circ T_Y.$
Hence the statement follows from (1) and Proposition \ref{compmon}.  
\end{proof}



Let $\A$ be an admissible subcategory of $D^b(X).$
We define a pairing $\bil{\cdot}{\cdot}_\A$ on $\HH{\A}$
by the restriction of $\bil{\cdot}{\cdot}_X$ to $\HH{\A}.$

\begin{lemma}\label{25}
The pairing $\bil{\cdot}{\cdot}_\A$ is compatible with $T_\A.$
\end{lemma}
\begin{proof}
By Lemma \ref{serre}, we have $S_\A \cong R_\A \circ S_X \circ i_\A$.
Hence we have $S^{-1}_\A \cong S^*_\A \cong L_\A \circ S^{-1}_X \circ i_\A$ and 
$T_\A=\phi_{L_\A} \circ T_X|_{\HH{\A}}$.
Since $L^!_\A \cong i_\A$,
the statement follows from Proposition \ref{compmon} and (2) of Proposition \ref{adjoint}.
\end{proof}

\begin{lemma}\label{26}
Let $\A$ and $\B$ are admissible subcategories of $D^b(X)$.
Suppose that $\B \subseteq \A^\perp$.
Then $\bil{\alpha}{\beta}_X=0$ for $\alpha \in \HH{\A}$ and $\beta\in \HH{\B}$.
\end{lemma}
\begin{proof}
Note that $\widetilde{L}^!_\A=(i_\A\circ L_\A)^!\cong \widetilde{R}_\A$.
Hence we have 
  \[\bil{\alpha}{\beta}_X
   =\bil{\phi_{\widetilde{L}_\A} (\alpha)}{\beta}_X
   =\bil{\alpha}{\phi_{\widetilde{R}_{\A}}(\beta)}_X.\]
Since $\B \subset \A^\perp$ we see $\widetilde{R}_{\A}|_{\B}\cong 0,$
which implies the lemma.
\end{proof}

By applying this lemma to $\A$ and $\A^\perp$, 
we see that $\bil{\cdot}{\cdot}_\A$ is non-degenerate.
In general, for a Fourier-Mukai functor $F$, the morphism $\phi_F$ does not preserve the pairings.
\begin{lemma}\label{27}
Let $F: \A \to \B $ be a Fourier-Mukai functor.
Suppose that $F$ is fully faithful.
Then $\phi_F$ preserves the pairings.
\end{lemma}
\begin{proof}
Note that $F^*$ is a Fourier-Mukai functor.
Since $F$ is fully faithful, it follows $F^*\circ F \cong \id_\A$ (e.g., \cite[Corollary 1.22]{huybook}).
Thus we have 
\[\bil{\phi_F(\alpha)}{\phi_F(\beta)}_\B
   =\bil{\phi_{F^*\circ F}(\alpha)}{\beta}_\A
   =\bil{\alpha}{\beta}_\A. \qedhere\]
\end{proof}
Let $(\{\A_i\}_{1\le i \le m}, \{F_\idx\}_{\idx\in\is})$
be a framed semiorthogonal decomposition of type $(\is, \tau)$ with a frame $\fra{X}{\is}$ 
of $D^b(X).$\\
Combining Lemmas \ref{25}, \ref{26}, and \ref{27}, we conclude
\begin{theorem}
The pairing $\bil{\cdot}{\cdot}_X$ is compatible with the Stokes data
$\mathfrak{B} \big( \frs{\is} \big),$
and hence gives a mutation system.
\end{theorem}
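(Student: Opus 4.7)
The plan is to verify the two bulleted conditions in the definition of compatibility of the pairing $\bil{\cdot}{\cdot}_X$ with the Stokes data $\mathfrak{B}\bigl(\{\A_i\},\{F_c\}\bigr)$; compatibility of $\bil{\cdot}{\cdot}_X$ with $T_X$ is already Proposition \ref{compmon}, and once both bullets are established the ``hence gives a mutation system'' part is automatic from the equivalence between Stokes data with compatible pairing and mutation systems. Fix $c\in\is$ and set $\A:=\A_{\tau(c)}$. By the definition $\lub f_c=\phi_{i_\A}\circ\phi_{F_c}$, the induced pairing $\bil{v}{w}_c:=[\lub f_c v,\lub f_c w\rangle_X$ on $V_c$ is the $\phi_{F_c}$-pullback of the restriction $\bil{\cdot}{\cdot}_\A$ of $\bil{\cdot}{\cdot}_X$ to $\HH{\A}\subset\HH{X}$. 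Since $\phi_{F_c}$ is an isomorphism in $\rep_\Z(\C)$, the first bullet reduces to showing $\bil{\cdot}{\cdot}_\A$ is non-degenerate and compatible with $T_\A$: compatibility is Lemma \ref{25}, while non-degeneracy of $\bil{\cdot}{\cdot}_\A$ is the observation immediately after Lemma \ref{26}, using the orthogonal decomposition $\HH{X}=\HH{\A}\oplus\HH{\A^\perp}$ afforded by Corollary \ref{sum} together with Lemma \ref{26} and non-degeneracy of $\bil{\cdot}{\cdot}_X$.

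For the adjoint condition $[v,\lub f_c v_c\rangle_X=[\lub f^*_c v,v_c\rangle_c$, I unfold $\lub f^*_c=\phi_{F_c}^{-1}\circ\phi_{L_\A}$ and the definition of $\bil{\cdot}{\cdot}_c$ to rewrite the right-hand side as
\[ [\lub f_c\lub f^*_c v,\lub f_c v_c\rangle_X=[\phi_{\widetilde{L}_\A}(v),\lub f_c v_c\rangle_X. \]
Thus the identity is equivalent to $[v-\phi_{\widetilde{L}_\A}(v),\lub f_c v_c\rangle_X=0$. The element $v-\phi_{\widetilde{L}_\A}(v)$ is the $(\lperp\A)$-component of $v$ in the semiorthogonal decomposition $\langle\lperp\A,\A\rangle$, hence lies in $\HH{\lperp\A}$ (which equals $\ima\phi_{\widetilde{R}_{\lperp\A}}$ by Lemma \ref{proj}), while $\lub f_c v_c\in\HH{\A}$. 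Since $\A\subset(\lperp\A)^\perp$ by the very definition of the left orthogonal complement, the required vanishing is Lemma \ref{26} applied to the pair $(\lperp\A,\A)$.

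I do not anticipate a genuine obstacle: every step is a direct invocation of Lemmas \ref{25}--\ref{27}, Proposition \ref{compmon}, and the semiorthogonal complement machinery already developed. The only care needed is to unwind the several compositions of embedding, projection, and isomorphism Fourier-Mukai kernels hidden inside $\lub f_c$ and $\lub f^*_c$ so that each required identity becomes a manifest instance of one of these earlier lemmas. Note that condition (b) of the mutation system definition, namely $[\lub f_c v,\lub f_{c'}w\rangle_X=0$ whenever $\tau(c)>\tau(c')$, then falls out for free: combining the adjoint identity just verified with the Stokes data relation $\lub f^*_{c'}\circ\lub f_c=0$ for $\tau(c')<\tau(c)$ already built into $\mathfrak{B}\bigl(\{\A_i\},\{F_c\}\bigr)$ by the preceding theorem yields this vanishing without further argument.
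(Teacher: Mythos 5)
Your proposal is correct and follows essentially the same route as the paper, which simply combines Lemmas \ref{25}, \ref{26} and \ref{27} (together with Proposition \ref{compmon} and Corollary \ref{sum}); you have merely filled in the details the paper leaves implicit. In particular, your verification of the adjointness condition via the identity $\id_{\HH{X}}=\phi_{\widetilde{R}_{\lperp\A}}+\phi_{\widetilde{L}_{\A}}$ and an application of Lemma \ref{26} to the pair $(\lperp\A,\A)$ is a legitimate unwinding of the same semiorthogonality machinery (one could alternatively invoke Proposition \ref{adjoint}), and your closing observation that condition $(b)$ of Definition \ref{def of mutation system} follows from adjointness together with $\lub f^*_{\idxx}\circ\lub f_{\idx}=0$ is exactly the equivalence between Stokes data with compatible pairing and mutation systems used in the paper.
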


\begin{definition}
This mutation system
is called a B-mutation system.
\end{definition}

To simplify notation, the B-mutation system is also denoted by
$\mathfrak{B} \big( \frs{\is} \big)$.


\section{Dubrovin type conjectures}
Let $X$ be a Fano manifold.
Fix $-\is_X$-generic $\theta_\circ \in \R.$
In \S \ref{sectionamut}, under Assumption \ref{assumption of exponential type}, we construct the A-mutation system 
$\amut{X}{\theta_\circ}$ with the splitting data $(\tau_{\theta_\circ}, \lua f_{\theta_\circ})$.
On the other hand, in \S \ref{sectionbmut}, we construct the B-mutation system
$\mathfrak{B} \big( \frs{-\is_X} \big)$
with the splitting data $(\tau_{\theta_\circ}, \lub f)$
from a framed semiorthogonal decomposition of type $(-\is_X,\tau_{\theta_\circ})$
with a frame $\fra{X}{-\is_X}$.

Let
\[\hat{\Gamma}_X=\prod_{i=1}^{d_X} \Gamma(1+\delta_i) \in \coho{X}\]
be the Gamma class of $X$, 
where $\delta_1, \delta_2, \dots, \delta_{d_X}\in H^\bullet(X ; \Z)$ are the Chern roots of $TX$ and
$\Gamma(z)$ is the Gamma function.
We define an isomorphism
\[\Gamma: \HH{X} \to \coho{X} \]
by 
$\Gamma(\alpha):=(\alpha \cup \hat{\Gamma}_X)/\sqrt{\Td_X},$
where we identify $H^\bullet(X)$ with $\HH{X}$ via the Hodge decomposition.

\begin{lemma}
For $\alpha,\beta \in \HH{X},$
we have $\bil{\Gamma(\alpha)}{\Gamma(\beta)}_X=[\alpha, \beta)_X.$
\end{lemma}
\begin{proof}
Note that $\bil{\alpha}{\beta}_X=[\alpha,\beta)_X$.
Hence it is sufficient to show that $\bil{\Gamma(\alpha)}{\Gamma(\beta)}_X=\bil{\alpha}{\beta}_X$.
By the identity 
\[e^{\pi{\tt i}z}\Gamma(1-z)\Gamma(1+z)=\frac{\tat z}{1-e^{-\tat z}},\]
we see that 
\[e^{\pi{\tt i}\rho_{\!{}_X}}W(\hat{\Gamma}_X)\cup\hat{\Gamma}_X=\Td_X.\]
From this and $W(\sqrt{\Td_X})=e^{-\pi{\tt i}\rho_{\!{}_X}}\sqrt{\Td_X},$ we obtain 
\[e^{\pi{\tt i}\rho_{\!{}_X}}W(\Gamma(\alpha))\cup \Gamma(\beta)=e^{\pi{\tt i}\rho_{\!{}_X}}W(\alpha)\cup\beta.\]
Hence we have $\bil{\Gamma(\alpha)}{\Gamma(\beta)}_X=\bil{\alpha}{\beta}_X.$
\end{proof}

\begin{definition}\label{dubconj}
We say that $X$ satisfies Dubrovin type conjecture
if the following conditions hold:
\begin{itemize}
\item  The meromorphic connection $\mathscr{M}_X$ is of 
          exponential type $($Assumption $\ref{assumption of exponential type})$.
\item There exists a framed semiorthogonal decomposition of type
$(-\is_X,\tau_{\theta_\circ})$ with a frame\\ $\fra{X}{-\is_X}$ such that $\Gamma$ gives an isomorphism of mutation systems, i.e., $$\ima \lua f_{\theta_\circ, \idx}=\ima (\Gamma \circ \lub f_\idx)$$
for all $\idx\in-\is_X$.
\end{itemize}
\end{definition}

\begin{remark}
For another $-\is_X$-generic $\theta'_\circ$, we can construct another A-mutation system.
By Theorem $\ref{mutation gives stokesfactor},$
these mutation systems are related by the braid group action.
From this and Theorem $\ref{compatible to braid},$
we see that $X$ satisfies Dubrovin type conjecture for $\theta_\circ$
if and only if  $X$ satisfies Dubrovin type conjecture for $\theta'_\circ$.
\end{remark}

\begin{remark}
\
\begin{itemize}
\item If $($even part of $)$ the quantum cohomology of $X$ is semisimple and $D^b(X)$ has a full exceptional collection, 
then Dubrovin type conjecture defined above follows from Gamma conjecture II $($\cite[Conjecture 4.6.1]{ggi}$).$
\item Gamma conjecture II is closely related to Dubrovin's $($original \!$)$ conjecture \cite{dubgeo}
$($see \cite[\S 4.6]{ggi} for a more detailed explanation$)$
\end{itemize}
\end{remark}


\section{Properties of quantum connections of Fano manifolds}
\subsection{Symmetry of fundamental solutions of quantum connections}\label{subsecsol}

Let $X$ be a Fano manifold. 
We denote by $r_X$ the Fano index of $X,$ that is,  
\[r_X:= \max \left\{ r \in \Z_{>0} \middle| \frac{c_1(X)}{r} \in H^2(X ; \Z)\right\}.\]
Recall that $\mathrm{Pic}(X) \cong H^2(X;\Z)$ since $X$ is Fano. 
Let $\oo(k)$ be the line bundle which satisfies $c_1(\oo(k))=kc_1(X)/r_X.$
Set $H_X:=c_1(\oo(1)).$ 
Let $\C_q$ be the complex plane with the coordinate $q$.
Set $c_1(X)*_q:=c_1(X)*_{c_1(X)\log q}.$
Note that
$c_1(X)*_q \in \mathrm{End}(H^\bullet(X))\otimes\C[q^{r_X}]$
by the divisor axiom.
Recall that the set of eigenvalues of the operator $c_1(X)*_0$ is denoted by $\is_X.$ 
We denote by $E(\idx)$ the generalized eigenspace of $c_1(X)*_0$
associated with the eigenvalue $\idx \in \is_X.$ 
Set $\log \omega_k:=-(\tat k)/r_X $ and $ \omega_k:=e^{\log \omega_k}$ for $k \in \Z.$
We recall the following equation (See, e.g., \cite[\S 2.2]{ggi}):
\begin{align}\label{gauge}
q^\mu(c_1(X)*_q)q^{-\mu}= q (c_1(X)*_0). 
\end{align}
By substituting $\omega_k$ for $q$, we have the following lemma:

\begin{lemma}\label{symexp}
If $ \idx \in C_X,$ then $\omega_k \idx \in C_X.$
Moreover, we have $E(\idx)\cong E(\omega_k \idx)$
and this isomorphism is given by $\alpha \to \omega^{-\mu}_k \alpha.$
\end{lemma}
\begin{proof}
Since $c_1(X)*_q\in \mathrm{End}(H^\bullet(X))\otimes\C[q^{r_X}]$,
it follows $c_1(X)*_{\omega_k}=c_1(X)*_0$. 
Hence the lemma follows from the equation (\ref{gauge}). 
\end{proof}

We introduce a one parameter deformation of the quantum connection
$(\mathcal{H}_X, \nabla)$ of $X$, which is the restriction of 
the usual Dubrovin connection (\cite{dubgeo2},\cite{dubgeo}) to the $c_1(X)$ direction 
(see also \cite[Chapter 8]{coxkatz}, \cite[\S 3.2.1]{kkp2} and references therein).
We consider the trivial bundle 
\[\widetilde{\mathcal{H}}_X:= 
(H^\bullet(X) \times \C_z \times \C_q \to \C_z \times \C_q)\]
with the meromorphic connection 
\[\widetilde{\nabla}:=d - \left(\frac{1}{z}(c_1(X)*_q)-\mu\right)\frac{dz}{z}
              +\left(\frac{c_1(X)*_q}{z}\right)\frac{dq}{q}.\]
By definition,
the restriction of $(\widetilde{\mathcal{H}}_X, \widetilde{\nabla})$ to $q=1$
is the quantum connection $(\mathcal{H}_X, \nabla)$.

Set 
\[\widetilde{S}(z,q):=
   q^{-\mu}S\left(\frac{z}{q}\right) \left(\frac{z}{q}\right)^{-\mu} \left(\frac{z}{q}\right)^{\rho_{\!{}_X}} .\]
This is a holomorphic map defined on the universal cover $\widetilde{\C}^*_z \times  \widetilde{\C}^*_q.$

\begin{lemma}\label{defsol}
$\widetilde{S}$ gives a fundamental solution of the meromorphic connection
$(\widetilde{\mathcal{H}}_X, \widetilde{\nabla})$, that is, $\widetilde{\nabla} \widetilde{S}=0.$
\end{lemma}
\begin{proof}
Set $S'(z):=S(z)z^{-\mu}z^{\rho_{\!{}_X}}.$
This lemma is proved by the following computation:
\begin{align*}
\widetilde{\nabla}_{\frac{\partial}{\partial z}} \widetilde{S} 
             &=q^{-\mu} \frac{1}{q} \left(\frac{dS'}{dz}\right)\left(\frac{z}{q}\right)
               +q^{-\mu} \frac{1}{q} \left(\frac{q}{z}\right) \mu S'\left(\frac{z}{q}\right)
               -q^{-\mu} \frac{1}{q} \left(\frac{q}{z}\right)^2 (c_1(X)*_0) S'\left(\frac{z}{q}\right) \\
             &=q^{-\mu} \frac{1}{q}(\nabla_{\frac{d}{dz}}S')\left(\frac{z}{q}\right) \\
             &=0,  \\
\widetilde{\nabla}_{\frac{\partial}{\partial q}} \widetilde{S}
             &=-\mu q^{-\mu} \frac{1}{q} S'\left(\frac{z}{q}\right)
               -q^{-\mu}\left(\frac{z}{q^2}\right) 
                 \left(\frac{dS'}{dz}\right)\left(\frac{z}{q}\right)
               +q^{-\mu}\frac{1}{z}(c_1(X)*_0) S'\left(\frac{z}{q}\right) \\
             &=-q^{-\mu}\left(\frac{z}{q^2}\right) (\nabla_{\frac{d}{dz}}S')
                 \left(\frac{z}{q}\right) \\
             &=0,
\end{align*}
where we use the equation (\ref{gauge}).
\end{proof}
Recall that $T(z)=z^\mu S(z) z^{-\mu}.$
\begin{proposition}\label{symsol}
$T(z/\omega_k)=T(z).$
\end{proposition}
\begin{proof}
Since $\widetilde{\nabla} \widetilde{S}=0$ and $c_1(X)*_{\omega_k}=c_1(X)*_0,$
we see that
$\widetilde{S}(z, \omega_k)=z^{-\mu} T(z/\omega_k) z^{\rho_{\!{}_X}} \omega^{-\rho_{\!{}_X}}_k$
is a fundamental solution of $(\mathcal{H}_X, \nabla).$
Hence $z^{-\mu} T(z/\omega_k) z^{\rho_{\!{}_X}}$ is also a fundamental solution.
It is obvious that $T(z/\omega_k)= \id$ at $z=\infty.$
This implies $T(z/\omega_k)=T(z)$ by uniqueness (Proposition \ref{fundamental solution}).
\end{proof}


\subsection{Gamma conjecture I}
In this section, we recall Gamma conjecture I and derive some assertions.
First, we recall Property $\oo$.
Set \[\mathrm{T}_X:=\max\left\{|\idx| \middle| \idx \in \is_X \right\}.\]

\begin{definition}[{\cite[Definition 3.1.1]{ggi}}]\label{propertyo} 
We say that $X$ satisfies Property $\oo$ if the following conditions hold:
\begin{itemize}
\item $\mathrm{T}_X \in \is_X.$ \\
\item If $\idx \in \is_X$ and $|\idx|=\mathrm{T}_X,$ then $\idx=\omega_k \mathrm{T}_X$
         for some $k.$ \\
\item The multiplicity of $\mathrm{T}_X$ is one, i.e., $\dim E(\mathrm{T}_X)=1.$
\end{itemize}
\end{definition}

\begin{remark}
Galkin-Golyshev-Iritani \cite{ggi} only considered the even part cohomology
$H^{\mathrm{ev}}(X):=\bigoplus_{k \in 2\Z} H^k(X).$
But the next lemma implies our definition of Property $\oo$ is equivalent to the original one.
Note that the set of eigenvalues of  $(c_1(X)*_0)|_{H^{\mathrm{ev}}(X)}$ is equal to $\is_X$
$($see, e.g., the proof of \cite[Proposition 7.1]{gi}$)$. 
Hence we easily see that our Property $\mathcal{O}$ implies the original one.
\end{remark}

\begin{lemma}\label{6.6}
Assume that $H^\mathrm{ev}(X)$ satisfies the same conditions as Definition \ref{propertyo}.
Then $X$ satisfies Property $\oo$. 
\end{lemma}
\begin{proof}
The proof is the same as \cite[Proposition 1.2]{herupd}.
Set $H^\text{odd}(X):=\bigoplus_{k-1\in 2\Z} H^k(X).$
Let $\alpha$ be an element of $H^\text{odd}(X)\cap E(\idx) \setminus \{0\}.$
Take $\beta \in H^{\text{odd}}(X)$ such that $(\alpha, \beta)_X=1.$
Then we have $\alpha *_0 \beta \in H^\text{ev}(X) \cap E(\idx) \setminus\{0\},$
and $(\alpha *_0\beta)^2=0.$
Thus we have $\dim \big( H^{\text{ev}}(X) \cap E(\idx) \big) \ge 2,$
which implies the lemma.
\end{proof}

We define the subspace 
\[\mathscr{A}_{(\idx, \theta)} \subset \left\{s: \widetilde{\C}^*_z \to H^\bullet(X) \middle|\nabla s=0\right\}\] of flat sections as follows (cf. \cite[\S 3.3]{ggi}). 
A section $s$ is an element of $\mathscr{A}_{(\idx, \theta)}$ if and only if
there exists a constant $C$ and a non-negative integer $N$ such that 
$\|e^{\idx/z} s(z)\| \le C|z|^{-N}$ for $\log z \in \widetilde{\C}^*_z$ with $\Im\log z=\theta, |z|<1$.
Here the norm of $e^{\idx/z}s$ is given by a fixed hermitian metric on $\coho{X}$.
$\mathscr{A}_{(\omega_k \mathrm{T}_X, -2\pi k/r_X)}$ is denoted briefly by $\mathscr{A}_k.$ 
Recall that the isomorphism
\[\Phi: H^\bullet(X) \to \left\{s: \widetilde{\C}^*_z \to H^\bullet(X) \middle|\nabla s=0\right\}\]
is defined by $\Phi(\alpha)(z)=(2\pi)^{-\di_X/2}S(z)z^{-\mu}z^{\rho_{\!{}_X}}\alpha.$

\begin{definition}[{\cite[Conjecture 3.4.3]{ggi}}]
We say that $X$ satisfies Gamma conjecture I
if $X$ satisfies Property $\oo$ and $ \mathscr{A}_0=\C\Phi(\hat{\Gamma}_X)$.
\end{definition}

We have the following lemma:

\begin{lemma}\label{syma}
$\mathscr{A}_{(\idx, \theta)} \cong \mathscr{A}_{(\omega_k\idx, \theta-2\pi k/r_X)}$.
This isomorphism is given by $\alpha \to \Ch(\oo_X(k))\cup \alpha$ via the isomorphism $\Phi$.
\end{lemma}
\begin{proof}
For $s \in \mathscr{A}_{(\idx, \theta)},$
choose $\alpha \in H^\bullet(X)$ which satisfies $s=\Phi(\alpha)$. 
By Lemma \ref{defsol}, 
we have $\omega^{-\mu}_k s(z/\omega_k) \in \mathscr{A}_{(\omega_k\idx, \theta-2\pi k/r_X)}.$ 
By Proposition \ref{symsol},
we have
$ \omega^{-\mu}_k s(z/\omega_k)=\Phi(\omega_k^{-\rho_{\!{}_X}} \cup \alpha).$
The lemma is proved by $\omega_k^{-\rho_{\!{}_X}}=\Ch(\oo_X(k))$.
\end{proof}

From this lemma together with Gamma conjecture I, we can calculate $\mathscr{A}_k$.
\begin{proposition}\label{symv}
If $X$ satisfies Gamma conjecture I, then we have 
\[\mathscr{A}_k=\C \Phi(\hat{\Gamma}_X \Ch(\oo_X(k))).\]
\end{proposition}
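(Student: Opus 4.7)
The plan is to deduce the statement as a direct consequence of Lemma~\ref{syma} and the hypothesis coming from Gamma conjecture I.

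First I would recall the two ingredients. Gamma conjecture I supplies the base case $\mathscr{A}_0 = \C\Phi(\hat{\Gamma}_X)$ at $(\idx,\theta)=(\mathrm{T}_X,0)$. Lemma~\ref{syma} supplies an isomorphism $\mathscr{A}_{(\idx,\theta)} \simeqto \mathscr{A}_{(\omega_k\idx,\theta - 2\pi k/r_X)}$, realized under $\Phi$ by the cup product with $\Ch(\oo_X(k))$.

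Applying Lemma~\ref{syma} with $(\idx,\theta)=(\mathrm{T}_X,0)$ transports $\mathscr{A}_0$ to $\mathscr{A}_{(\omega_k\mathrm{T}_X, -2\pi k/r_X)}$, which is by definition $\mathscr{A}_k$. Since the isomorphism acts under $\Phi$ by cup with $\Ch(\oo_X(k))$, the generator $\Phi(\hat{\Gamma}_X)$ is sent to $\Phi\big(\Ch(\oo_X(k)) \cup \hat{\Gamma}_X\big) = \Phi\big(\hat{\Gamma}_X \Ch(\oo_X(k))\big)$, giving the desired equality $\mathscr{A}_k = \C\Phi(\hat{\Gamma}_X\Ch(\oo_X(k)))$.

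There is essentially no obstacle: all nontrivial content has already been packaged into Lemma~\ref{syma} (whose proof used the symmetry $T(z/\omega_k)=T(z)$ from Proposition~\ref{symsol} together with the identity $\omega_k^{-\rho_{\!{}_X}} = \Ch(\oo_X(k))$). The only thing worth checking explicitly is that $(\omega_k\mathrm{T}_X, -2\pi k/r_X)$ really matches the indexing convention of $\mathscr{A}_k$ used in the definition just above Gamma conjecture I, which it does by construction.
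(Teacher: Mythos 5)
Your proposal is correct and is exactly the paper's argument: the paper simply says the statement is obvious from Lemma \ref{syma}, and your write-up spells out that deduction (base case $\mathscr{A}_0=\C\Phi(\hat\Gamma_X)$ from Gamma conjecture I, then transport by the isomorphism of Lemma \ref{syma}, which under $\Phi$ is cup product with $\Ch(\oo_X(k))$). Nothing is missing; your explicit check of the indexing convention $\mathscr{A}_k=\mathscr{A}_{(\omega_k\mathrm{T}_X,\,-2\pi k/r_X)}$ is the only detail the paper leaves implicit.
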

\begin{proof}
Obvious from Lemma \ref{syma}.
\end{proof}


\subsection{Vanishing cycles}

In this section, we fix $(-\is_X)$-generic $\theta_\circ$
and assume $\mathscr{M}_X$ is of exponential type (Assumption \ref{assumption of exponential type}).
Let $\amut{X}{\theta_\circ}$
be the A-mutation system with the splitting data $(\tau_{\theta_\circ}, \lua f_{\theta_\circ}).$
\begin{definition}
Let $\A \subset D^b(X)$ be an admissible subcategory.
We say that $\A$ is a vanishing cycle at $(\idx, \theta_\circ)$ 
if $\ima \lua f_{\theta_\circ, -\idx}=\Gamma(\HH{\A}).$
\end{definition}

Recall that $E \in D^b(X) $ is called an exceptional object if 
\[\Hom(E, E[k])=
\begin{cases}
\C \quad &(k=0) \\
0 \quad &(k\neq 0).
\end{cases}
\] 
Let $\langle E \rangle$ be the smallest full strict triangulated subcategory of $D^b(X)$ which contains $E$.
We note that $\langle E \rangle$ is an admissible subcategory of $D^b(X)$ if $E$ is an exceptional object.
(see, e.g., \cite[Lemma 1.58]{huybook}).  
 
\begin{lemma}\label{exceptional} 
Let $E \in D^b(X)$ be an exceptional object. 
Then $\Gamma \big( \HH{\langle E \rangle} \big)=\C \hat{\Gamma}_X\Ch(E).$
\end{lemma}
\begin{proof}
We consider $E$ as an object of $D^b(\{\mathrm{pt}\} \times X).$
Let $\Phi_E: D^b(\{\mathrm{pt}\}) \to D^b(X)$ be the Fourier-Mukai functor with the Fourier-Mukai kernel $E.$
Then we have $\im\Phi_E \subset \langle E \rangle$ and this inclusion induces an isomorphism.
Hence we see that $\phi_E$ induces an isomorphism
$\HH{D^b(\{\mathrm{pt}\})} \cong \HH{\langle E \rangle}.$ 
By definition, we have $\HH{\{\mathrm{pt}\}} \cong \C$,
which implies $\im \phi_{E}=\C\nu(E)$, where $\nu(E)$ is the Mukai vector.
The statement follows from $\Gamma \big( \nu(E) \big)=\hat{\Gamma}_X\Ch(E)$.   
\end{proof}

We note that $\oo(k)$ is an exceptional object since $X$ is Fano.
Moreover, we easily see that the collection of objects $\big( \oo_X(0), \oo_X(1), \dots, \oo_X(r_X-1) \big)$
is exceptional, i.e., $\langle \oo_X(k) \rangle \subset \langle \oo_X(l) \rangle^\perp$ for $k<l.$

\begin{proposition}\label{ok is van cyc}
Suppose that $-\pi/2 \le \theta_\circ \le \pi/2.$
If $X$ satisfies Gamma conjecture I, 
then $\langle \oo_X(k) \rangle$ is a vanishing cycle at
$(\omega_k \mathrm{T}_X, \theta_\circ-2\pi k/r_X).$ 
\end{proposition}
\begin{proof}
By Lemma \ref{criterion c}, we have
\[\ima \lua f_{\theta_\circ-2\pi k/r_X, -\omega_k \mathrm{T}_X}
  \subset \mathscr{A}_k=\C \Phi \big( \hat{\Gamma}_X \Ch (\oo_X(k)) \big).\]
By Remark \ref{jigen} and Lemma \ref{symexp},
we have $ \dim \ima \lua f_{\theta_\circ-2\pi k/r_X, -\omega_k \mathrm{T}_X}=1.$ 
Thus we have
\[\ima \lua f_{\theta_\circ-2\pi k/r_X, -\omega_k \mathrm{T}_X}=
   \C\Phi\big(\hat{\Gamma}_X \Ch(\oo_X(k))\big).\]
Combined with Lemma \ref{exceptional}, we have the statement.
\end{proof}

In the proof of Lemma \ref{van cyc at center}, which is used in the proof of Theorem \ref{propertyd},
we use the following notation: 
\begin{notation}
Let $(V, \bil{\cdot}{\cdot})$ be a finite dimensional vector space $V$ over a field $\bm{k}$
with a pairing $\bil{\cdot}{\cdot}$ and $W$ be a subspace of $V.$
Set
\[ \lperp W:=\left\{ v \in V \middle|\bil{v}{w} = 0 \ \text{for all}\  w \in W \right\},\ 
   W^\perp:=\left\{ v \in V \middle|\bil{w}{v} = 0 \ \text{for all}\  w \in W \right\}.\]
\end{notation}

\begin{lemma}\label{van cyc at center}
Suppose that $\theta_\bullet=\{\theta_\idx\}_{\idx\in-\is_X}$ is ordered.
Take $\idx_\circ\in-\is_X$.
Let $\{\A_{(-\idx, \theta_\idx)}\}_{\idx\in-\is_X\setminus \{\idx_\circ\}}$ be a family of vanishing cycles.
Assume that $\A_{(-\idx,\theta_\idx)}\subset \A^\perp_{(-\idxx,\theta_\idxx)}$
for $\idx<_{\theta_\bullet+\pi/2}\!\idxx$.
Set
\[\A_{(-\idx_\circ,\theta_{\idx_\circ})}:=
  \left(\bigcap_{\idx<_{\theta_\bullet+\pi/2} \idx_\circ}\lperp\A_{(-\idx,\theta_\idx)}\right)
  \bigcap
  \left(\bigcap_{\idx_\circ<_{\theta_\bullet+\pi/2} \idx}\A^\perp_{(-\idx,\theta_\idx)}\right).
\]
Then $\A_{(-\idx_\circ,\theta_{\idx_0})}$ is a vanishing cycle at $(-\idx_\circ, \theta_{\idx_\circ})$.
\end{lemma}
\begin{proof}
By construction, $\{\mathcal{A}_{(-\idx,\theta_\idx)}\}_{\idx\in-\is_X}$
gives a framed semiorthogonal decomposition \[\frs{-\is_X}\]
of type $(-\is_X, \tau_{\theta_\bullet})$
with a frame $\{\mathcal{A}_{(-\idx,\theta_\idx)}\}_{\idx\in-\is_X}$ by requiring 
$F_\idx=\id$ for all $\idx$. 
Hence we can construct a B-mutation system 
\[ \mathfrak{B} \big( \frs{-\is_X} \big)\]
with the splitting data $(\tau_{\theta_\bullet}, \lub f).$
By semiorthogonality of the pairing $\bil{\cdot}{\cdot}_X$, we have

\begin{align}\label{3.8}
\im \lub f_{\idx_\circ}=\left(\bigcap_{\idx<_{\theta_\bullet+\pi/2} \idx_\circ}\lperp(\im \lub f_\idx)\right)
  \bigcap
  \left(\bigcap_{\idx_\circ<_{\theta_\bullet+\pi/2} \idx}{(\im \lub f_\idx)}^\perp\right).
\end{align}

On the other hand, $\im \lua f_{\theta_\bullet, \idx_\circ}$ of the splitting data
$(\tau_{\theta_\bullet}, \lua f_{\theta_\bullet})$ of
$\mathfrak{A}_{\theta_\bullet} \big( \mathrm{RH} (\mathscr{M}_X, \mathscr{Q}_X)\big)$ 
has a similar expression as the equation (\ref{3.8}), i.e., 
\begin{align} 
\im \lua f_{\theta_\bullet, \idx_\circ}=
\left(\bigcap_{\idx<_{\theta_\bullet+\pi/2} \idx_\circ}\lperp(\im \lua f_{\theta_\bullet, \idx}) \right)
\bigcap \left(\bigcap_{\idx_\circ<_{\theta_\bullet+\pi/2} \idx}{(\im \lua f_{\theta_\bullet, \idx})}^\perp\right).
\end{align}
By the definition of vanishing cycles,
we have $\im ( \Gamma \circ \lub f_\idx )=\im \lua f_{\theta_\bullet,\idx}$ for $\idx\neq\idx_\circ.$
Thus we have $\im (\Gamma \circ \lub f_{\idx_\circ})=\im \lua f_{\theta_\bullet,\idx_\circ}$,
which proves the lemma.
\end{proof}


\section{Examples}\label{Example}
In this section, we prove that complete intersection Fano manifolds in projective spaces with Fano index greater than one satisfy Dubrovin type conjectures. 
The proof is similar to the proof of Gamma conjectures for
projective spaces (\cite[\S 5]{ggi}).    
\subsection{Calculations of cohomology}
 
Let $F$ be an ample vector bundle on a Fano manifold $Y$, that is,
the tautological line bundle of the projective space bundle $\mathbb{P}(F)$ is ample.
By \cite[Proposition 1.8]{mukpol}, $F$ is generated  by global sections.
Hence $F$ is convex, that is, $H^1(\mathbb{P}^1,f^*F)=0$
for all non-constant holomorphic maps $f:\mathbb{P}^1 \to Y.$
We assume $1 \le \mathrm{rk}F \le d_Y.$
Let $s$ be a global section of $F$ and set $X:=s^{-1}(0).$
By a Bertini type theorem (\cite[Theorem1.10]{mukpol}), we can choose general $s$ so that $X$ is reduced and smooth.
We assume $X$ is a Fano manifold.
We denote by $i:X \to Y$ the inclusion and set $\amb:=\im i^*.$
Since $F$ is convex, 
$\amb$ is closed under the quantum cup product
$*_\tau$ for $\tau \in \amb$ (see, e.g., \cite{coaqua}, \cite[\S 2.4]{iriqua}, \cite[\S 3.4]{imm}).
The next lemma is well known  (see, e.g., \cite[\S 2.2]{manqua}) and
we only use the case $Y=\mathbb{P}^{d_Y}$, but we give a detailed proof for completeness.


\begin{lemma}
\
\begin{itemize}
\item $\coho{X}=\amb \oplus \amb^\perp.$
\item $\amb^\perp \subset H^{d_X}(X).$
\end{itemize}
Here $\amb^\perp$ is the orthogonal subspace of $\amb$ with respect to the Poincar\'e pairing.
\end{lemma}
\begin{proof}
By definition, we have 
\[\amb \cap \amb^\perp=\left\{i^*\alpha \middle| \int_X i^*\alpha \cup i^*\beta =0 \quad \text{for all}
   \ \beta \in \coho{Y} \right\}. \]
Using the projection formula and $i_*i^*=(\tat)^{d_Y-d_X}e(F)\cup$, we obtain 
\[\int_X i^*\alpha \cup i^*\beta=\int_Y e(F)\cup\alpha\cup\beta,\]
where $e(F)\in H^\bullet(Y ; \Z)$ is the Euler class of $F$.
Thus we have 
\[\amb \cap \amb^\perp=i^*\mathrm{ker}(e(F)\cup-).\]
By Sommese's theorem (see, e.g., \cite[Theorem 7.1.1]{lazbook2}),
it follows that $i^*:H^k(Y) \to H^k(X)$ is an isomorphism for $k<d_X.$
Choose an ample line bundle $L$ on $Y.$
Then
\[(i^*c_1(L)\cup-)^{d_X-k}:H^k(X) \to H^{2d_X-k}(X)\]
is an isomorphism for $k<d_X$ 
by the hard Lefschetz theorem.
Consider the following commutative diagram: 

\[
\xymatrix{
H^k(Y) \ar[r]^{i^*} \ar[d]_{(c_1(L)\cup-)^{d_X-k}} & H^k(X) \ar[d]^{(i^*c_1(L)\cup-)^{d_X-k}} \\
H^{2d_X-k}(Y) \ar[r]^{i^*} & H^{2d_X-k}(X)  
}\]

Here the top and right arrows are isomorphisms for $k<d_X.$
This diagram implies that
\[i^*:H^k(Y) \to H^k(X)\] is surjective for $k\neq d_X.$
Hence we obtain  $\amb^\perp \subset H^{d_X}(X).$
By the hard Lefschetz theorem for ample bundles (e.g., \cite[Theorem 7.1.10]{lazbook2}), it follows that 
\[e(F)\cup-:H^{d_X}(Y) \to H^{2d_Y-d_X}(Y)\]
 is injective.
Thus $H^{d_X}_{\text{amb}}(X) \cap \amb^\perp=\{0\},$ which proves the lemma.
\end{proof}
 
We consider the quantum connection $\mathscr{M}_X$ as a $\mathscr{O}_{\C, 0}(*\{0\})\langle \partial_z\rangle$-module.

\begin{lemma}\label{decomp}
The decomposition $\coho{X}=\amb \oplus \amb^\perp$ induces the decomposition
\[\coho{X}\otimes \mathscr{O}_{\C, 0}(*\{0\}) =\amb\otimes \mathscr{O}_{\C, 0}(*\{0\}) \oplus \amb^\perp\otimes \mathscr{O}_{\C, 0}(*\{0\})\]
as a $ \mathscr{O}_{\C, 0}(*\{0\}) \langle\partial_z\rangle$-module.
Moreover the $\mathscr{O}_{\C, 0}(*\{0\}) \langle\partial_z\rangle$-module
$\amb^\perp\otimes \mathscr{O}_{\C, 0}(*\{0\})$ is of exponential type.
\end{lemma}
\begin{proof}
Recall that $\amb$ is closed under the quantum product $*_0$.
For $\alpha \in \amb^\perp$ and $\beta \in \amb,$ we have 
\[(c_1(X)*_0\alpha, \beta)_X=(\alpha, c_1(X)*_0 \beta)_X=0,\]
where we use $c_1(X)=i^*(c_1(Y)-c_1(F)) \in \amb.$
This implies $c_1(X)*_0\alpha \in \amb^\perp.$
Hence $c_1(X)*_0$ and $\mu$ are compatible with the decomposition 
$\coho{X}=\amb\oplus\amb^\perp$, 
which implies the first statement.
We will show the second statement.
Since $\mu=0$ on $\amb^\perp \subset H^{d_X}(X),$
the differential $\partial _z$ acts on $\amb^\perp \subset H^{d_X}(X)$ as $d/dz-(c_1(X)*_0)/z^2.$
Let $c_1(X)*_0=S+N$ be the Jordan decomposition, where $S$ is the semisimple part and $N$ is the nilpotent part.
Without loss of generality, we can assume $N$ has only one Jordan block.
Take a basis $e_1, e_2, \dots e_k$ of $\amb^\perp$
such that $N(e_i)=e_{i+1}$ for $1 \le i \le k-1$ and $N(e_k)=0$.
We define a grading operator $\mathrm{Gr}$ by $ \mathrm{Gr}(e_i):=i \cdot e_i.$
Then we easily see that
\[z^{\mathrm{Gr}} \left( \frac{d}{dz}-\frac{c_1(X)*_0}{z^2} \right) z^{-\mathrm{Gr}}
   =\frac{d}{dz}-\frac{\mathrm{Gr}}{z}-\frac{N}{z}-\frac{S}{z^2}\]
and the right hand side is obviously of exponential type.
\end{proof}

If the Fano index of $X$ is greater than one,
we can calculate the action of $c_1(X)*_0$ on $\amb^\perp$ as follows:

\begin{lemma}[{\cite[\S 7]{gi}}]\label{primeigen}
If $r_X \ge 2,$ then $c_1(X)*_0\alpha=0$ for $\alpha \in \amb^\perp.$
\end{lemma}
\begin{proof}
Since $\alpha \in \amb^\perp \subset H^{d_X}(X)$ and $r_X \ge 2$,
we have \[c_1(X)*_0\alpha \in H^{d_X+2}(X) \oplus \bigoplus_{k\le d_X-2}H^k(X).\]
But $c_1(X)*_0 \alpha \in \amb^\perp \subset H^{d_X}(X).$
Hence we have $c_1(X)*_0\alpha=0.$
\end{proof}


\subsection{Dubrovin type conjectures for complete intersections in projective spaces}
In this section, we assume that $Y$ is the projective space $\mathbb{P}^{d_Y}$ of dimension $d_Y$ 
and $F=\oo(d_1)\oplus \cdots \oplus \oo(d_k).$
Moreover, we assume
$d_i \ge 2, d_X=d_Y-k \ge 3,$ and $r_X=d_Y+1-d_1-d_2-\cdots-d_k \ge2.$   
 
\begin{proposition}\label{exptype}
The quantum connection $\mathscr{M}_X$ is of exponential type.
\end{proposition}
\begin{proof}
By Lemma \ref{decomp}, it is sufficient to show that
$(\amb\otimes \mathscr{O}_{\C, 0}(*\{0\}), \nabla^{\text{amb}})$ is of exponential type,
where $\nabla^{\text{amb}}$ is the restriction of $\nabla$ to $\amb\otimes \mathscr{O}_{\C, 0}(*\{0\})$.
Let $\log q \in \C$.
Set $q:=e^{\log q}, w:=qz,$ and $\tau:=c_1(X)\log q.$
Note that $c_1(X)*_\tau$ defines an endomorphism of $\amb$ since $\tau \in \amb$. 
Then we can easily check that 
\[q^{-\mu}\nabla^{\text{amb}}_{\frac{d}{dz}}q^{\mu}
  =(w\frac{d}{dw}+\mu-\frac{1}{w}c_1(X)*_\tau)|_{\amb\otimes \mathscr{O}_{\C, 0}(*\{0\})}.\]
By \cite[Corollary 6.14]{reinon},
the right hand side is equipped with a non-commutative Hodge structure for $|q|\ll1$ (see \cite{sabnon} for the definition),
and hence of exponential type.
\end{proof}

As a corollary of Givental's mirror theorem, we calculate the ring structure of $\amb$ with respect to the quantum product. 
\begin{lemma}\label{coh of z}
Set $D_X=d_1^{d_1}d_2^{d_2}\cdots d_k^{d_k}.$
Then \[\amb \cong \C[x]/\langle x^{d_X+1-r_X}(x^{r_X}-D_X) \rangle \] as a ring. 
\end{lemma}
\begin{proof}
By \cite[Corollary 9.3]{givequ},
$H_X \in H^2(X)$ satisfies the relation $H^{d_X+1}_X-D_X H^{d_1+d_2+\cdots+d_k-k}_X=0.$
Hence we have the ring morphism 
\[\C[x]/\langle x^{d_X+1-r_X}(x^{r_X}-D_X)\rangle \to \amb\]
which sends $x$ to $H_X.$
We easily see that these rings have the same dimension.
Since \[H_X^{l}=\overbrace{H_X \cup H_X \cup \cdots \cup H_X}^l+(\text{lower degree term}),\]
the ambient cohomology $\amb$ is generated by $H_X$.
Hence the above morphism is an isomorphism.
\end{proof}

\begin{remark}\label{ind1}
Similarly, by using \cite[Corollary 10.9]{givequ}, we have
\[\amb \cong \C[x]/\langle (x+D'_X)^{d_X}(x^{r_X}-D_X+D'_X) \rangle \]
for the case $r_X=1,$
where $D'_X:=d_1!d_2!\cdots d_k!.$
\end{remark}

\begin{corollary}
$\mathrm{T}_X=r_X D_X^{1/r_X}, C_X=\left\{\mathrm{T}_X\omega_k \middle| k\in \Z\right\}\cup\{0\}.$
Moreover, $X$ satisfies Property $\oo.$
\end{corollary}
\begin{proof}
This statement easily follows from Lemmas \ref{primeigen} and \ref{coh of z}.
\end{proof}

\begin{proposition}\label{GIforZ}
$X$ satisfies Gamma conjecture.
\end{proposition}
\begin{proof}
By \cite[Theorem 5.0.1]{ggi}, projective spaces satisfy Gamma conjectures I.
By \cite[Corollary 3.9, Theorem 8.3]{gi},
it follows that $X$ satisfies Gamma conjecture I by induction on $k.$ 
\end{proof}


\begin{theorem}\label{propertyd}
$X$ satisfies Dubrovin type conjecture.
\end{theorem}
\begin{proof}
Choose a sufficiently small positive real number $\theta_\circ \in \R$.
Set $\idx_\circ:=0$ and $\idx_k:=-\mathrm{T}_X\omega_k \ (k=0,1,\dots, r_X-1)$.
We define a ordered tuple of real numbers $\theta_\bullet$ by 
\[ \theta_\idx:=
  \begin{cases}
    \theta_\circ \quad &(\idx=\idx_\circ)\\
    -2\pi k/r_X+\theta_\circ \quad &(\idx=\idx_k).
  \end{cases}
\]
Then the order $<_{\theta_\bullet+\pi/2}$ on $-\is_X$ is given by
$\idx_\circ<_{\theta_\bullet+\pi/2}\idx_0<_{\theta_\bullet+\pi/2}\idx_1<_{\theta_\bullet+\pi/2}\cdots <_{\theta_\bullet+\pi/2}\idx_{r_X-1}$.
Note that $\theta_\circ$ and $\theta_\bullet$ are $(-\is_X)$-generic. 
By Corollary \ref{orderedmutation},
we can construct a mutation system
$\amut{X}{\theta_\bullet}$ with the splitting data $(\tau_{\theta_\bullet}, \lua f_{\theta_\bullet})$. 

Since $X$ satisfies Gamma conjecture I, using Proposition \ref{ok is van cyc}, we see that $\A_{-\idx_k}:=\langle\oo(k)\rangle$ is a vanishing cycle at $(-\idx_k, \theta_{\idx_k})$.
It is easy to see that $\{\A_{-\idx_k}\}_{{\idx_k} \in -\is_X \setminus \{\idx_\circ\}}$
satisfies the assumption of Lemma \ref{van cyc at center},
which implies 
\[\A_{-\idx_\circ}:=\langle\oo(0),\oo(1),\dots, \oo(r_X-1)\rangle^\perp\]
is a vanishing cycle at $(-\idx_\circ, \theta_{\idx_\circ})$. 
We define a framed semiorthogonal decomposition \[\frs{-\is_X}\]
of type $(-\is_X, \tau_{\theta_\bullet})$
with the frame $\fra{X}{-\is_X}:=\{ \mathcal{A}_{-\idx}\}_{\idx \in \is_X}$
by requiring $F_\idx=\id_{\mathcal{A}_{-\idx}}.$ 
We consider the corresponding B-mutation system
\[\mathfrak{B} \big( \frs{-\is_X} \big) \]
with the splitting data $(\tau_{\theta_\bullet}, \lub f)$.
Then, by the definition of vanishing cycle categories,
we have $\ima (\Gamma \circ \lub f_\idx)=\ima \lua f_{\theta_\bullet, \idx}$ for all $\idx \in -\is_X$. 

Let
$\amut{X}{\theta_\circ}$
be the A-mutation system with the splitting data $(\tau_{\theta_\circ}, \lua f_{\theta_\circ})$.
By Theorem \ref{compatible to braid}, it is sufficient to show that 
\[(\tau_{\theta_\bullet}, \lua f_{\theta_\bullet})=
   (\bar{\sigma} \circ \tau_{\theta_\circ}, \sigma \lua f_{\theta_\circ})\]
for some $\sigma \in \mathrm{Br}_{r_X+1}.$
To show this statement, we introduce the following ordered tuple of real numbers
$\{\phi_\bullet\}_{\idx\in-\is_X}$:
\[\phi_\idx:=
  \begin{cases}
    \theta_\idx & (\theta_\idx>\theta_\circ-\pi) \\
    \theta_\circ-\pi & (\theta_\idx \le \theta_\circ-\pi).
  \end{cases}
\]
We can easily see that $\phi_\bullet$ is $-\is_X$-generic. 
Let $\amut{X}{\phi_\bullet}$ be the corresponding mutation system with the splitting data
$(\tau_{\phi_\bullet}, \lua f_{\phi_\bullet}).$
Note that $V_{\mathscr{M}_X}$ is identified with $\coho{X}$ via the isomorphism $\Phi$
(see Definition \ref{3.5}). 
We define elements of the symmetric group $s, s'$ by
$\tau_{\phi_\bullet}\circ\tau_{\theta_\circ}^{-1}, \tau_{\theta_\bullet}\circ\tau_{\phi_\bullet}^{-1}$
respectively.
We claim that 
\[(\tau_{\phi_\bullet}, \lua f_{\phi_\bullet})=
  \big( s \circ \tau_{\theta_\circ}, (s)_R \lua f_{\theta_\circ} \big), \quad
  (\tau_{\theta_\bullet}, \lua f_{\theta_\bullet})=
  \big( s' \circ \tau_{\phi_\bullet}, (s')_R \lua f_{\phi_\bullet} \big).\]
We first show
$(\tau_{\phi_\bullet}, \lua f_{\phi_\bullet})=
\big(s \circ \tau_{\theta_\circ}, (s)_R \lua f_{\theta_\circ} \big).$
By Lemma \ref{braidact}, we have 
\[\big((s)_R \lua f_{\theta_\circ}\big)_\idx=
\left( \prod^\leftarrow_{i \in I_{\tau_{\theta_\circ}(\idx)}(s)}R_i \right)
\circ \lua f_{\theta_\circ, \idx}.\]
On the other hand, by Corollary \ref{factorcomputation}, we have
\[\lua f_{\phi_\idx, \idx}=
\left( \prod^\leftarrow_{i \in I_{\tau_{\theta_\circ}(\idx)}(s_{\theta_\circ, \phi_\idx})}R_i \right)
\circ \lua f_{\theta_\circ, \idx}.\]
Hence it is sufficient to show that 
\[\left\{\idxx\in -\is_X\middle|
\begin{array}{l}
    \idx <_{\theta_\circ+\pi/2}\idxx \\
   \idxx <_{\phi_\bullet+\pi/2}\idx
\end{array}
\right\}
=
\left\{\idxx\in-\is_X\middle|
\begin{array}{l}
    \idx <_{\theta_\circ+\pi/2}\idxx \\
   \idxx <_{\phi_\idx+\pi/2}\idx
\end{array}
\right\}
\]
for all $\idx\in-\is_X$.
By simple consideration, we see that both sets are equal to 
\[\begin{cases}
  \emptyset & (\idx=\idx_\circ) \\
  \left\{\idx_l\middle|\idx<_{\theta_\circ+\pi/2}\idx_l, l<k\right\}
    & (\idx=\idx_k, \theta_{\idx_k}>\theta_\circ-\pi) \\
  \left\{\idx_l\middle|\idx<_{\theta_\circ+\pi/2}\idx_l, \right\}\cup \left\{\idx_\circ \right\}
    & (\idx=\idx_k,\theta_{\idx_k} \le \theta_\circ-\pi).
  \end{cases}
\]
We next show
$(\tau_{\theta_\bullet}, \lua f_{\theta_\bullet})=
\big( s' \circ \tau_{\phi_\bullet}, (s')_R \lua f_{\phi_\bullet} \big).$
It is sufficient to show that
\[\left\{\idxx\in -\is_X\middle|
\begin{array}{l}
    \idx <_{\phi_\bullet+\pi/2}\idxx \\
   \idxx <_{\theta_\bullet+\pi/2}\idx
\end{array}
\right\}
=
\left\{\idxx\in-\is_X\middle|
\begin{array}{l}
    \idx <_{\phi_\idx+\pi/2}\idxx \\
   \idxx <_{\theta_\idx+\pi/2}\idx
\end{array}
\right\}
\]
for all $\idx\in-\is_X$ as ordered sets,
where order of the left hand side (resp. right hand side) is defined by
$<_{\phi_\bullet+\pi/2}$ (resp. $<_{\phi_\idx+\pi/2}$).
We see that both sets are equal to 
\[\begin{cases}
  \emptyset & (\idx=\idx_\circ) \\
  \emptyset & (\idx=\idx_k, \theta_{\idx_k}>\theta_\circ-\pi) \\
  \left\{ \idx_l\middle|\idx<_{\theta_\circ-\pi/2}\idx_l, l<k \right\} &
  (\idx=\idx_k, \theta_{\idx_k} \le \theta_\circ-\pi).
  \end{cases}
\]
Since $\phi_{\idx_k}=\theta_\circ-\pi$ for $\idx_k$ with $\theta_{\idx_k}<\theta_\circ-\pi$, we see that $<_{\phi_\bullet+\pi/2}$ and $<_{\phi_\idx+\pi/2}$ define the same order on this set,
which completes the proof.
\end{proof}
\begin{remark}
Similarly, we can prove that $X$ satisfies Dubrovin type conjecture if $r_X=1$ and $d_X$ is odd
$($see also Lemma $\ref{6.6}$ and Remark $\ref{ind1})$.
\end{remark}

 \subsection*{Acknowledgement}
          Both of the authors would like to thank Hiroshi Iritani 
          for discussions and encouragement in many occasions, 
          Masahiro Futaki and Yuuki Shiraishi for informal seminar.
          The first author would like to thank Tatsuki Kuwagaki and Kohei Yahiro for discussions.
          The second author would like to express his gratitude for 
          his supervisor Takuro Mochizuki. 
          He is also grateful Claus Hertling and Claude Sabbah for their advise, 
          encouragement, and kindness. 

          The first author was supported by JSPS 
          Grant-in-Aid for Scientific Research numbers 23224002 and 15H02054.
          The second author was supported by Grant-in-Aid for JSPS Research Fellow
          number 16J02453 and the Kyoto Top Global University Project (KTGU).

\bibliographystyle{plain}

\end{document}